\documentclass[11pt]{amsart}

\usepackage{amsmath,amsthm,amssymb}
\usepackage{booktabs}
\usepackage{array}
\usepackage{multirow}
\usepackage{float}
\usepackage{graphicx}
\usepackage{xcolor}
\usepackage{longtable}
\usepackage[margin=1in]{geometry}
\usepackage[hidelinks]{hyperref}
\theoremstyle{plain}
\newtheorem{thm}{Theorem}[section]
\newtheorem{prop}[thm]{Proposition}
\newtheorem{coro}[thm]{Corollary}
\newtheorem{lem}[thm]{Lemma}

\theoremstyle{remark}
\newtheorem{rem}[thm]{Remark}
\theoremstyle{plain}
\newtheorem*{que}{\textbf{Question}}

\newcommand{\Q}{\mathbb{Q}}
\newcommand{\Z}{\mathbb{Z}}
\newcommand{\R}{\mathbb{R}}

\title[A two-parameter family of rational triangles
       with equal perimeter and area] {Beyond the unique pair:\\
       a two-parameter family of rational triangles\\
       with equal perimeter and area}

 \author[Allen]{Jesse Allen}
\address{D\'epartement de math\'ematiques et de statistique,
Universit\'e de Montr\'eal, Montr\'eal, Qu\'ebec, Canada}
\email{jesse.allen@umontreal.ca}

\author[Beaudry-Ogden]{Olivier Beaudry-Ogden}
\address{D\'epartement de math\'ematiques et de statistique,
Universit\'e de Montr\'eal, Montr\'eal, Qu\'ebec, Canada}
\email{olivier.beaudry-ogden@umontreal.ca}

\author[Bouchard]{Th\'eophane Bouchard}
\address{École Normale Sup\'erieure de Lyon, Lyon, France}
\email{theophane.bouchard@ens-lyon.fr}

\author[Fr\'ely]{Mat\'eo Fr\'ely}
\address{École Normale Sup\'erieure de Lyon, Lyon, France}
\email{mateo.frely@ens-lyon.fr}

\author[Lal\'in]{Matilde Lal\'in}
\address{D\'epartement de math\'ematiques et de statistique,
Universit\'e de Montr\'eal, Montr\'eal, Qu\'ebec, Canada}
\email{matilde.lalin@umontreal.ca}

\author[Oyono-Montoki]{Abel-Jimmy Oyono-Montoki}
\address{D\'epartement de math\'ematiques et de statistique,
Universit\'e de Montr\'eal, Montr\'eal, Qu\'ebec, Canada}
\email{abel-jimmy.oyono-montoki@umontreal.ca}

\author[Ringeling]{Berend Ringeling}
\address{D\'epartement de math\'ematiques et de statistique,
Universit\'e de Montr\'eal, Montr\'eal, Qu\'ebec, Canada}
\email{bjringeling@gmail.com}

\date{\today}

\begin{document}

\subjclass[2020]{Primary 11G30; Secondary 14G05, 14H40, 14J28, 11G05.}

\keywords{rational triangles, equal perimeter and area, genus-\(2\) curves, Chabauty–Coleman method, Jacobians, Richelot isogenies, elliptic \(K3\) surfaces}

\maketitle
\begin{abstract}
We study pairs consisting of a rational isosceles triangle and a
rational triangle having a prescribed angle $\theta$, with equal
perimeter and equal area. Writing $\rho=\cos(\theta)\in\Q$, we obtain a
two-parameter description of the solutions and, for fixed $\rho$, a
family of curves $C_\rho$ which are generically of genus $2$. We
determine the two singular specializations: there is no genuine pair
for $\rho=\frac12$, while $\rho=\frac{47}{49}$ admits infinitely many
genuine pairs. We completely parametrize two natural one-parameter
families in which the $\theta$-triangle is isosceles, and prove several
density results for values of $\rho$ admitting one or more genuine
pairs. For thirty-five explicit values of $\rho$, Chabauty--Coleman
computations determine $C_\rho(\Q)$ completely and show that the
corresponding pair is unique up to homothety. We also study multiplicity when one
triangle is fixed; the condition for a scalene $\theta$-triangle to
have two isosceles partners leads to an elliptic $K3$ surface and
yields a dense set of such values of $\rho$.
Finally, we construct infinite families with at least four genuine
pairs and exhibit examples carrying at least five, and in two cases
at least six, genuine pairs.
\end{abstract}

\section{Introduction}
\label{sec:intro}

A classical question in the arithmetic of triangles asks for two non-congruent
triangles, with rational side lengths, that share both their perimeter and their
area. The special case in which one triangle is right and the other is isosceles was posed by Denis Boris as an IBM Research \emph{Ponder This Challenge}  ~\cite{IBMPonderThis2004}. The first systematic answer in a modern form is
due to Hirakawa and Matsumura~\cite{HirakawaMatsumura, HirakawaMatsumura-corr}, who proved that, up to homothety, there is a
\emph{unique} pair consisting of a rational right triangle and a rational
isosceles triangle with equal perimeter and equal area, namely the right triangle
with sides $(135,352,377)$ and the isosceles triangle with sides
$(132,366,366)$. To prove this, they reduce the problem to the determination of the rational points on a genus 2 curve, which is then made precise by
 Chabauty's method. Related work of
Andrica and \c{T}urca\c{s}~\cite{AndricaTurcas} treats pairs sharing other pairs
of invariants (such as the inradius and circumradius, or the semiperimeter and
circumradius), reducing in each case to rational points on a curve of genus $3$
that are found by an elliptic-curve Chabauty computation.

The broader problem of finding non-congruent rational triangles with equal
perimeter and area has also been studied extensively in the setting of Heron
triangles. Kramer and Luca \cite{KramerLuca} and Bremner \cite{Bremner} constructed
parametric families, while van Luijk \cite{vL07} proved, using an elliptic $K3$
surface, that arbitrarily many Heron triangles can have the same perimeter and
area. We emphasize that throughout this paper a rational triangle means a
triangle with rational side lengths; its area need not be rational.

In this paper we return to the perimeter and area setting, and consider
the natural generalization in which the right angle is replaced by an
arbitrary angle.
\begin{que}
Fix $\theta$ with $\cos(\theta)=\rho\in\Q$. Do there exist two non-congruent rational triangles, one isosceles and one having an angle $\theta$, with the same perimeter and area?
\end{que}
We call such a pair a \emph{genuine pair}.
Equating the two invariants leads to
a family of curves
\[
C_\rho\colon y^2=F_\rho(u),
\]
which are generically of genus $2$. The discriminant in the $u$-variable
shows that, inside $-1<\rho<1$, there are exactly two singular
specializations, namely $\rho=\frac12$ and $\rho=\frac{47}{49}$, whose
normalizations have genus $1$. The right-angle setting of Hirakawa and
Matsumura is recovered at $\rho=0$. (Remark~\ref{rem:right-angle}
recovers their classical pair explicitly.)

Our results address both the existence and the uniqueness of genuine
pairs. For $\rho\in\Q\cap(-1,1)$, let $N(\rho)$ denote the number of
distinct genuine pairs, up to homothety, having
$\cos(\theta)=\rho$. The two exceptional values discussed above already
display very different behaviour. For $\rho=\frac12$, corresponding to
an angle of $60^\circ$, there is no genuine pair (Theorem~\ref{thm:60}), while there are infinitely many for
$\rho=\frac{47}{49}$ (Theorem~\ref{thm:47-infinite}). In fact, by Corollary \ref{cor:finite-infinite},
\[
N(\rho)=\infty
\qquad\Longleftrightarrow\qquad
\rho=\frac{47}{49}.
\]
We also show that the supplementary angle of $120^\circ$, equivalent to $\rho=-\frac12$, admits no
genuine pair (Remark~\ref{rem:120}).

We also obtain a two-parameter description of the solutions (Theorem~\ref{thm:param}). For a
fixed parameter $q$, the equations expressing equality of perimeter
and area lead to a singular cubic curve, and intersecting it with a suitable
line gives the parametrization
\[
\rho=\rho(t,q)
=
\frac{q(q+1)+(1-q)(4t^3-t)}
     {(q+1)+(1-q)(4t^3+t)},
\qquad
u=2t(1-\rho).
\]
This parametrization is the starting point for both the theoretical
constructions and the computational searches carried out in the paper.

In contrast to the infinite family at
$\rho=\frac{47}{49}$, we obtain a collection of explicit uniqueness
results. For thirty-five values of $\rho$ arising from a rational
one-parameter family, we determine all the rational points of
$C_\rho$ by Chabauty--Coleman. In each of these cases
$\#C_\rho(\Q)=10$, but exactly one of these rational points gives a
genuine pair. This implies that
\[
N(\rho)=1
\]
for each of the thirty-five values listed in
Table~\ref{tab:rank-one-apex} (Theorem~\ref{thm:rank-one-apex}).

The existence of genuine pairs is a frequent phenomenon. When the
$\theta$-triangle is itself isosceles, two natural families arise,
according to whether $\theta$ is the apex angle or one of the base
angles in the $\theta$-triangle. We call these the apex and base families, respectively. The
apex family admits a complete rational parametrization (Theorem \ref{thm:family-A}), and the
corresponding rational values of $\rho$ are dense in $(-1,1)$ (Corollary \ref{cor:apex-density}).
The base family is also rationally parametrized (Theorem \ref{thm:family-B}). Moreover, the set of
values $\rho\in(0,1)$ belonging simultaneously to both families is
dense in $(0,1)$. In particular, the set of rational values
$\rho\in(0,1)$ for which
\[
N(\rho)\geq2
\]
is dense in $(0,1)$ (Theorem \ref{thm:two-pairs-dense}).

We also study a different form of multiplicity, in which one of the
two triangles is held fixed. A fixed isosceles triangle has at most one
genuine $\theta$-triangle partner, up to congruence (Proposition~\ref{prop:fixed-isosceles}). In the opposite
direction, a fixed scalene $\theta$-triangle can have at most two
genuine isosceles partners, and we obtain an explicit square condition
characterizing when the second partner exists (Proposition~\ref{prop:several-isosceles-partners}). The surface defined by
this square condition is birational to an elliptic $K3$ surface (Proposition~\ref{prop:square-surface-k3}). Using
a section of infinite order on this surface, we prove that the set of
rational $\rho\in(-1,1)$ for which a scalene $\theta$-triangle has two
distinct genuine isosceles partners is dense in $(-1,1)$ (Theorem~\ref{thm:infinitely-many-two-partners}).

There are families with even
 higher multiplicities. We construct a
rational one-parameter family for which every admissible parameter
gives at least four distinct genuine pairs (Theorem~\ref{thm:four-pair-rational}). The resulting values of
$\rho$ are dense in the interval
\[
\left[
\frac{3239209}{6414409},
\frac{85249}{150049}
\right]
\]
(Corollary~\ref{cor:four-pair-density}).
We also give a second construction, governed by an elliptic curve of
rank $1$, which produces infinitely many further rational values of
$\rho$ carrying at least four distinct genuine pairs (Theorem~\ref{thm:four-pair-elliptic}).

Our computational search also reveals examples of still higher
multiplicity (Proposition~\ref{prop:high-multiplicity}). In particular,
\[
N\left(\frac{1952}{3977}\right)\geq6,
\qquad
N\left(\frac{2619}{5069}\right)\geq6,
\]
while
\[
N(\rho)\geq5
\]
for
\[
\rho\in
\left\{
-\frac{327}{1241},
\frac{613}{1495},
\frac{817}{1969},
\frac{7671}{16121},
\frac{302}{527},
\frac{2341}{4141},
\frac{171}{221}
\right\}.
\]
The corresponding pairs are displayed in
Tables~\ref{tab:high-multiplicity-disjoint} and
\ref{tab:high-multiplicity-shared}. These are lower bounds: we do not
claim that the displayed lists exhaust all genuine pairs for these
values of $\rho$.

The arithmetic techniques behind these results vary considerably with
$\rho$. For fixed $\rho$, the problem is generically reduced to the
determination of rational points on a genus-$2$ curve. In the
rank-$1$ cases we use Chabauty--Coleman to obtain complete
determinations of the rational points, but for many specializations in
our computational range a $2$-descent does not prove rank $1$, so
classical Chabauty cannot be applied directly. This difference already
appears in the two isosceles families: the Jacobian of the generic
apex-family curve has rank $1$ over $\Q(t)$ (Proposition~\ref{prop:generic-rank}), whereas the Jacobian of the
generic base-family curve has rank at least $2$ over $\Q(m)$ (Proposition~\ref{prop:base-generic-rank}). Other
parts of the paper use elliptic curves and density of their rational
points, as well as the elliptic $K3$ surface arising from the
second-partner condition. Throughout, one must also separate genuine
solutions from degenerate and coincident configurations.

This paper is organized as follows. In Section~\ref{sec:general-case},
we introduce the general configuration and parametrize the two types of
triangles. Section~\ref{sec:two-geometries} develops two complementary
parametrizations of the equal-perimeter, equal-area condition, leading
both to the genus-$2$ curves $C_\rho$ and to the two-parameter
description of the solutions. In Section~\ref{sec:singular} we study
the two singular specializations $\rho=\frac12$ and
$\rho=\frac{47}{49}$. Section~\ref{sec:isosceles-theta} is devoted to
the apex and base families arising when the $\theta$-triangle is itself
isosceles, together with their parametrizations and density properties.
In Sections~\ref{sec:fixed-triangle} and
\ref{sec:square-condition-surface} we study multiplicity when one
triangle is fixed and the elliptic $K3$ surface arising from the
condition for a second isosceles partner. Section~\ref{sec:jacobian-ranks}
concerns the Mordell--Weil ranks of the Jacobians $J_\rho$, including
the thirty-five rank-$1$ specializations for which we prove uniqueness.
In Section~\ref{sec:four-pairs} we construct families with four genuine
pairs and present examples of higher multiplicity. Finally,
Appendix~\ref{app:code} contains the \textsc{Magma} code used for the
computational results.

\section*{Acknowledgments}
The authors are grateful to Jennifer Balakrishnan for helpful discussions.

\section*{Funding Sources}

BR and ML are grateful to Anthropic for providing access and guidance to \textsc{Claude Max} during the workshop ``AI and number theory'' that took place in May 2026 at the American Institute of Mathematics.

This work was partially supported by the Natural Sciences and Engineering Research Council of Canada (RGPIN-2022-03651 to ML, Undergraduate Student Research Awards to JA and to AJOM), the Fonds de recherche du Qu\'ebec - Nature et technologies (Projet de recherche en \'equipe 345672 to ML,
Suppl\'ements aux bourses de 1er cycle en milieu acad\'emique du CRSNG to AJOM), the  Institut des sciences math\'ematiques (Postdoctoral Fellowship to BR, Bourse de recherche de premier cycle to OBO), the Centre de recherches math\'ematiques (Postdoctoral Fellowship to BR), the \'Ecole Normale Sup\'erieure de Lyon (Stage de recherche to TB and to MF).

\section*{Declaration of Generative AI and AI-Assisted Technologies} During the preparation of this manuscript, the authors used artificial intelligence (AI) tools to assist with mathematical discussions, code writing, and manuscript editing.

Anthropic's \textsc{Claude} contributed to the proof of Proposition~\ref{prop:square-surface-k3} and assisted in the search of examples, particularly those listed in Theorem~\ref{thm:rank-one-apex}.

OpenAI's \textsc{ChatGPT} contributed to the proofs for Propositions~\ref{prop:generic-rank} and \ref{prop:base-generic-rank}, found several of the examples in Theorem~\ref{thm:rank-one-apex} and  Proposition~\ref{prop:high-multiplicity} and was essential to the discovery of Theorems~\ref{thm:four-pair-rational} and \ref{thm:four-pair-elliptic}. It also provided the final versions of the Magma codes and was used  to improve the readability, grammar, and style of the author-drafted text.

The original research questions were entirely directed by the human authors. After using these tools, the authors reviewed, verified, and rewrote all outputs to ensure accuracy and eliminate potential hallucinations. The authors accept full accountability for the content and integrity of the final work.

\section*{Data Availability Statement}

All code needed to verify the computational results in this article
is included in the Appendix.

\section{The general configuration}
\label{sec:general-case}

Let us fix $\theta$ with $\cos(\theta)=\rho\in\Q$ and $-1<\rho<1$. We start by parametrizing the two
families of triangles and continue by imposing the conditions that the perimeters
and areas be equal.

\subsection{A triangle with angle \texorpdfstring{$\theta$}{theta}}
Let a triangle have rational sides $a,b,c$ with the angle $\theta$ between $a$ and
$b$. We will call such a triangle a $\theta$-triangle. We remark that its area is given by $ab\sqrt{1-\rho^2}/2$. By the law of cosines, $c^{2}=a^{2}+b^{2}-2\rho ab$. We write
$x=a/c$, $y=b/c$ and we  intersect the conic $1=x^{2}+y^{2}-2\rho xy$ with the line
$y=qx+1$ through $(0,1)$. After discarding the trivial solution $x=0$, we obtain the parametrization
\[
  x=\frac{2(\rho-q)}{q^{2}-2\rho q+1},\qquad
  y=\frac{1-q^{2}}{q^{2}-2\rho q+1}.
\]
Clearing the common denominator, the triangles of this type are homothetic to those parametrized by
\[
  \tau(q)=\bigl(2(\rho-q),\ 1-q^{2},\ q^{2}-2\rho q+1\bigr),\qquad q\in\Q,
\]
with perimeter $2(1+\rho)(1-q)$ and area $(\rho-q)(1-q^{2})\sqrt{1-\rho^{2}}$.
We remark that this gives a non-degenerate triangle (that is, the three sides have positive length and the triangle inequalities are satisfied) if and only if $|q|<1$ and $\rho>q$.

\subsection{An isosceles triangle}
Now we consider an isosceles triangle with rational sides $2r,s,s$ and such that its area lies in $\sqrt{1-\rho^{2}}\Q$ and therefore has height $h\sqrt{1-\rho^2}$ for some $h\in\Q$ with respect to the side $2r$. By Pythagoras' theorem, $s^2=r^2+(1-\rho^2)h^2$. This leads to an
 analogous construction (intersecting the conic
$1=y^{2}+(1-\rho^{2})x^{2}$ with the line $y=ux+1$), and to the parametrization
\[
  \iota(u)=\bigl(2(1-\rho^{2}-u^{2}),\ u^{2}-\rho^{2}+1,\ u^{2}-\rho^{2}+1\bigr),
  \qquad u\in\Q,
\]
with perimeter $4(1-\rho^{2})$ and area $-2u(1-\rho^{2}-u^{2})\sqrt{1-\rho^{2}}$.
This gives a non-degenerate triangle if and only if $-\sqrt{1-\rho^2}<u<0$.

\section{Two complementary geometries}\label{sec:two-geometries}

\subsection{Equating perimeter and area: the fixed $\rho$ parametrization}
Scaling the $\theta$-triangle by $k\in\Q^{\times}$ and equating the two
perimeters and the two areas gives
\begin{equation}\label{eq:system}
  \begin{cases}
    k(1-q)=2(1-\rho),\\[2pt]
    k^{2}(\rho-q)(1-q^{2})=-2u(1-\rho^{2}-u^{2}).
  \end{cases}
\end{equation}
Eliminating $q$ and excluding the degenerate cases $\rho=1$, $k=0$ leads to the quadratic equation in $k$:
 \[
-2(1-\rho)^2k^2
+\Bigl(
2(1-\rho)^3
+(4-u)(1-\rho)^2
+2u(1-\rho)
-u^3
\Bigr)k
-4(1-\rho)^3
=0.
\]
Since we are looking for solutions with $k\in\Q$, the quadratic equation has a rational solution if and only if its discriminant $F_\rho(u)$ is a rational square.
Thus we reach the curve
\begin{equation}\label{eq:Crho}
  C_{\rho}\colon\quad y^{2}=F_{\rho}(u)=Q_{\rho}(u)\,S_{\rho}(u),
\end{equation}
where
\[
  Q_{\rho}(u)=u^{2}-2(1-\rho)u+\rho^{2}-1
\]
and
\[
  S_{\rho}(u)=u^{4}+2(1-\rho)u^{3}+(1-\rho)(3-5\rho)u^{2}
  -4(1-\rho)^{2}(1+\rho)u-4(1-\rho)^{3}(1+\rho).
\]
A direct discriminant computation gives
\[
 \operatorname{disc}_{u}F_{\rho}
 =-2^{27}(\rho-1)^{21}(\rho+1)^3(2\rho-1)^2(49\rho-47).
\]
Therefore,  for $-1<\rho<1$, the curve $C_{\rho}$ is smooth of genus $2$
except at $
  \rho=\frac12, \rho=\frac{47}{49},$
where its normalizations have genus $1$.  For every smooth specialization we write
$J_{\rho}$ for its Jacobian.
The curve $C_\rho(\Q)$ always has two points at infinity, as well as the
 following trivial points:
\[
  \bigl(0,\ \pm 2(1-\rho)^{2}(1+\rho)\bigr)\quad(\text{degenerate}),
  \qquad
  \bigl(\rho-1,\ \pm 2(\rho-1)^{2}(2\rho-1)\bigr)\quad(\text{coincidence points}),
\]The latter points will be called the \emph{coincidence points}. For
$\rho>0$ they correspond to the configuration
$q=2\rho-1$, $k=1$, where the two triangles coincide. For $\rho\leq0$ this configuration is
non-admissible, but the corresponding rational points of $C_\rho$
are valid and useful.

A solution to the system \eqref{eq:system} will be a  \emph{genuine pair} if $k\,\tau(q)$ and $\iota(u)$ are  non-degenerate
triangles  that
are non-congruent.

\subsection{The complete parametrization with $q$ fixed}
We fix $q \in \Q\setminus \{\pm 1\}$ and consider the system \eqref{eq:system} as we eliminate $k$. This leads to the following cubic equation on $u$:
\begin{equation}\label{eq:Phiq}
  \Phi_{q}\colon\quad
  u^{3}-(1-\rho^{2})\,u-\frac{2(1-\rho)^{2}(\rho-q)(1+q)}{1-q}=0.
\end{equation}
\begin{thm}\label{thm:param}
For every $q\in\Q\setminus\{\pm1\}$, the point $(\rho,u)=(1,0)$ is a
node of $\Phi_q$. The rational points of $\Phi_q$ other than the node
are parametrized by
\begin{equation}\label{eq:param-tq}
  \rho=\rho(t,q)=\frac{q(q+1)+(1-q)(4t^{3}-t)}
                     {(q+1)+(1-q)(4t^{3}+t)},
\end{equation}
for $t\in \Q$ and $(q+1)+(1-q)(4t^3+t)\ne0$.
\end{thm}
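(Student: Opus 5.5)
Throughout, $q\neq\pm1$ is fixed and $\Phi_q$ is regarded as a plane cubic in the variables $(\rho,u)$. Clearing the denominator $1-q$, write
\[
G_q(\rho,u)=(1-q)\bigl(u^3-(1-\rho^2)u\bigr)-2(1-\rho)^2(\rho-q)(1+q).
\]
The plan is to show that $(1,0)$ is a singular point of $G_q=0$ and then parametrize the curve by the pencil of lines through it. A plane cubic with a rational double point is rational, and lines through the node meet the curve in exactly one further point.

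\emph{Step 1: the node.} Set $\rho=1+r$ and expand $G_q$ around $(r,u)=(0,0)$.
\begin{itemize}
\item Since $1-\rho^2=-r(2+r)$, we have $(1-\rho^2)u=-2ru-r^2u$.
\item Since $(1-\rho)^2=r^2$ and $\rho-q=(1-q)+r$, the last term contributes $-2r^2(1-q)(1+q)+O(r^3)$.
\end{itemize}
Hence $G_q$ has no constant or linear terms, and its quadratic part is
\[
(1-q)\cdot 2ru-2(1-q^2)r^2=2(1-q)\,r\,(u-(1+q)r).
\]
This is a product of two distinct rational linear forms, because the tangent slopes $u/r=0$ and $u/r=1+q$ differ when $q\neq-1$. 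So $(1,0)$ is a node with rational tangents. Consequently the cubic is geometrically irreducible of geometric genus $0$, since a reducible plane cubic has no isolated node of this shape.

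\emph{Step 2: the pencil of lines.} Take lines through the node in the form $u=\lambda(\rho-1)$, with $\lambda$ chosen so that it matches $t$. The substitution $u=2t(1-\rho)$ stated after the theorem suggests using $u=-2t\,r$. Substituting into $G_q$ and dividing by $r^2$ leaves an equation that is linear in $r$:
\[
(1-q)\bigl(-8t^3r+2t(2+r)\bigr)-2(1+q)\bigl((1-q)+r\bigr)=0.
\]
Expanding the terms with $\rho-q=(1-q)+r$ and $(1-\rho^2)=-r(2+r)$ and solving for $r$ should give
\[
\rho=1+r=\frac{q(q+1)+(1-q)(4t^3-t)}{(q+1)+(1-q)(4t^3+t)}.
\]
As a quick check, the denominator of $r$ is, up to sign, $(1-q)(8t^3-2t)+2(1+q)$. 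This is of the right shape, and I will then recombine $1+r$ into the stated form.

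\emph{Step 3: bijectivity.} Every rational point other than the node lies on a unique line through the node with rational slope. Recovering that slope uses $t=u/(2(1-\rho))$ when $\rho\neq1$. The remaining case is $\rho=1$ itself: then $G_q=(1-q)u^3$, so $u=0$ and the point is the node. Conversely, each $t$ with nonzero denominator gives a point with $\rho\neq1$. The excluded values of $t$, where the denominator vanishes, correspond to lines whose residual intersection is at infinity. Slopes corresponding to the tangent directions produce the node again and are either included as degenerate values or excluded; the vertical line does not matter because it corresponds to $\rho=1$.

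The main obstacle is the algebra in Step 2: getting signs right so that the result matches the stated form exactly, including the normalization $u=2t(1-\rho)$. Once the node is confirmed, the rest is standard.
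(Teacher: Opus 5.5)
Your route is the same as the paper's: expansion at $(1,0)$, the pencil $u=2t(1-\rho)$, recovery of $t=u/(2(1-\rho))$, and the observation that $\rho=1$ forces the node. There is a genuine gap, though. The irreducibility claim in Step~1 is false, and Step~3 relies on it. At $q=0$ the cubic factors as $\bigl(u+(1-\rho)\bigr)\bigl(u^2-(1-\rho)u-2\rho(1-\rho)\bigr)$, which is \eqref{eq:base}. This is a line and a conic crossing transversally at $(1,0)$, i.e.\ exactly a node of the shape you found. The line is the non-vertical tangent $u=(1+q)(\rho-1)$, which is the member $t=-\tfrac{1+q}{2}$ of your pencil.

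Concretely, take $(\rho_0,u_0)$ on $\Phi_q$ with $\rho_0\ne1$ and put $t_0=u_0/(2(1-\rho_0))$. The reduced equation is $(1-\rho_0)D(t_0)=(1-q)(2t_0+1+q)$ with $D(t)=(q+1)+(1-q)(4t^3+t)$. It yields $\rho_0=\rho(t_0,q)$ only if $D(t_0)\ne0$. If $D(t_0)=0$, then $t_0=-\tfrac{1+q}{2}$. Since $D\bigl(-\tfrac{1+q}{2}\bigr)=\tfrac12q^2(1+q)^2$, this happens exactly when $q=0$ and $t_0=-\tfrac12$. In that case the whole line $u=\rho-1$ lies on $\Phi_0$, and its rational points, e.g.\ $(\rho,u)=(0,-1)$, are never produced by the formula.

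So the excluded values of $t$ do not always give residual points at infinity. The parametrization claim as written holds only for $q\ne0$; at $q=0$ the coincident line $u=\rho-1$ must be added. The paper's proof also passes over this case, but your argument explicitly asserts something false to cover it.

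Smaller points:
\begin{itemize}
\item The factor $r$ in the quadratic part is the vertical tangent $\rho=1$, not the slope $u/r=0$. The two tangents are distinct for every $q$.
\item For $q\ne0$, the value $t=-\tfrac{1+q}{2}$ has nonzero denominator but returns the node. So a nonzero denominator does not guarantee $\rho\ne1$.
\item Step~2 has a sign error. Since $-(1-\rho^2)u=r(2+r)(-2tr)=-2tr^2(2+r)$, the bracket should read $-8t^3r-2t(2+r)$. With your sign the denominator becomes $(1-q)(8t^3-2t)+2(1+q)$, which does not match the target $2\bigl[(q+1)+(1-q)(4t^3+t)\bigr]$; your quick check actually exposes the error. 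With the corrected sign you get $r=-(1-q)(2t+1+q)/D(t)$, and $\rho=1+r$ is the stated formula.
\end{itemize}
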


\begin{proof} We can
immediately see that at $(\rho,u)=(1,0)$ we have
$\Phi_{q}(1,0)=0$. We also have $\frac{\partial\Phi_{q}}{\partial \rho}=2\rho u-2\frac{\partial}{\partial \rho}\frac{(1-\rho)^{2}(\rho-q)(1+q)}{1-q}$, which is zero at $(1,0)$ since $\rho=1$ is a double root in the second term. Finally we have $\frac{\partial\Phi_{q}}{\partial u}=3u^2-(1-\rho^2)$, which again is zero at $(1,0)$.
The Taylor series around $(\rho,u)=(1,0)$ is given by
\[\Phi_q(\rho,u)=2(\rho-1)(u-(q+1)(\rho-1))+\frac{2(q+1)}{q-1}(\rho-1)^3+(\rho-1)^2u+u^3.\]
Since the degree-two part is a product of two different linear forms,  we have two different tangents and a node. Finally, we intersect the equation $\Phi_q(\rho,u)=0$ with the line $u=2t(1-\rho)$ through $(1,0)$ to get parametrization \eqref{eq:param-tq}. Moreover, $(1,0)$ is the only point of $\Phi_q$ with $\rho=1$.
Hence every rational point $(\rho,u)\ne(1,0)$ lies on a unique line
of the form $u=2t(1-\rho)$, with
$t=\frac{u}{2(1-\rho)}\in\Q.$
Thus \eqref{eq:param-tq} parametrizes all rational points of
$\Phi_q$ other than the node.
\end{proof}
\begin{rem}[The right-angle case]\label{rem:right-angle}
The classical right-angle case is recovered directly from the complete
parametrization. Taking
\[
t=-\frac5{12},
\qquad
q=-\frac{11}{16},
\qquad
\mbox{in \eqref{eq:param-tq} gives}
\qquad
\rho=0,
\qquad
u=-\frac56.
\]
The perimeter equation in \eqref{eq:system} then gives
$k=\frac{32}{27}$, and
\[
k\tau\left(-\frac{11}{16}\right)
=
\left(
\frac{44}{27},\frac58,\frac{377}{216}
\right),
\qquad
\iota\left(-\frac56\right)
=
\left(
\frac{11}{18},\frac{61}{36},\frac{61}{36}
\right).
\]
Multiplying both triangles by $216$ gives
\[
(352,135,377)
\qquad\text{and}\qquad
(132,366,366),
\]
which, up to reordering of the sides, is precisely the pair of
Hirakawa and Matsumura.
\end{rem}

\section{The singular specializations of $C_\rho$} \label{sec:singular}

In this section we treat the two singular specializations of the family
$C_\rho$, namely $\rho=\frac12$ and $\rho=\frac{47}{49}$. In both cases,
the normalization of $C_\rho$ has genus $1$ and is birational over $\Q$
to an elliptic curve. However, the two cases lead to very different
pictures in terms of the number of genuine pairs.

\subsection{The case $\rho=1/2$}

The value $\rho=\frac12$ corresponds to $\theta=\frac{\pi}{3}$. We will show that this case admits no genuine
pair, by reducing it to a single elliptic curve of rank zero.

Recall from Section~\ref{sec:general-case} that we wrote $
C_{\rho}\colon y^{2}=F_{\rho}(u).$
In this case,
\begin{equation}\label{eq:F-half}
  F_{1/2}(u)=\frac{1}{16}(2u+1)^{2}g_{1/2}(u),
  \qquad
  g_{1/2}(u)=(2u-3)(u-1)(2u^{2}+3u+3).
\end{equation}
Setting $ w=\frac{4y}{2u+1}$
gives the birational model
\begin{equation}\label{eq:D}
D_{1/2}\colon\quad w^2=g_{1/2}(u).
\end{equation}
Since $g_{1/2}(u)$ is a squarefree quartic, $D_{1/2}$ is a smooth curve
of genus $1$ and hence gives the normalization of $C_{1/2}$.
 It has, for instance, the rational point $(u,w)=(0,3)$.

\begin{prop}\label{prop:E}
The genus-one curve $D_{1/2}$ is birational over $\Q$ to the elliptic curve
\[
E_{1/2}\colon\quad
Y^2+XY+Y=X^3-X^2-8X+11.
\]
This is the global minimal model of the elliptic curve with LMFDB label
\texttt{90.b3} \cite{lmfdb}. It has conductor $90$, Mordell--Weil rank $0$, and
$E_{1/2}(\Q)_{\mathrm{tors}}\cong \Z/6\Z$,
generated, for example, by the point $(3,1)$.
\end{prop}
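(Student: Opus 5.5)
The plan is to exhibit an explicit birational map from $D_{1/2}$ to a Weierstrass cubic and then identify that cubic with \texttt{90.b3}. We start from the rational point $(u,w)=(0,3)$ on $D_{1/2}\colon w^2=g_{1/2}(u)$ and expand
\[
g_{1/2}(u)=(2u-3)(u-1)(2u^2+3u+3)=4u^4-4u^3-3u^2-3u+9 .
\]
The constant term $9=3^2$ lets us apply the standard quartic-to-cubic transformation for a curve $w^2=au^4+bu^3+cu^2+du+e^2$ with $e=3$.

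Concretely, we substitute $u=1/v$ and $w=z/v^2$. This moves the point $(0,3)$ to the two points at infinity and gives
\[
z^2=9v^4-3v^3-3v^2-4v+4 ,
\]
a quartic with square leading coefficient $9$. We then complete the square: put $z=3v^2+\alpha v+\beta$ with $\alpha,\beta$ chosen to kill the $v^4$ and $v^3$ terms (so $\alpha=-\tfrac12$). The residual equation is quadratic in $v$ and yields the usual rational map to a cubic $Y^2=X^3+\cdots$. Alternatively, we simply invoke the classical explicit formulas (Cassels, \emph{Lectures on Elliptic Curves}, or Connell's handbook) for $w^2=$ quartic with a rational point. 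Either route produces a Weierstrass equation over $\Q$ with rational inverse, so $D_{1/2}$ is birational over $\Q$ to it.

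Next we compute invariants. The quartic $g_{1/2}$ has invariants $I,J$, and $D_{1/2}$ has a rational point, so its Jacobian is $Y^2=X^3-27IX-27J$. We compute $I$ and $J$, pass to the minimal model via Tate's algorithm or \texttt{MinimalModel} in \textsc{Magma}, and check that the minimal model is $Y^2+XY+Y=X^3-X^2-8X+11$. We confirm the $j$-invariant and that the conductor is $90$, since the bad primes come from the discriminant factors $2,3,5$. Matching the minimal model against the LMFDB entry \texttt{90.b3} pins down the isogeny-class member, which is needed because several curves in class \texttt{90.b} share the same conductor.

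For the arithmetic data we check directly that $(3,1)$ lies on $E_{1/2}$, since $1+3+1=5=27-9-24+11$. We then compute its multiples until reaching $O$ and verify that its order is exactly $6$. Reduction modulo good primes, say $p=7$ and $p=11$, bounds $\#E_{1/2}(\Q)_{\mathrm{tors}}$ by the gcd of the $\#E(\mathbb F_p)$. If that gcd is $6$, the torsion subgroup is exactly $\Z/6\Z$. For rank $0$ we use a $2$-descent (for example via the rational $2$-torsion point $3P$, giving a $2$-isogeny descent) showing that the Selmer rank is $0$. Alternatively, since $90$ is small, we compute $L(E,1)\neq0$ and cite Kolyvagin/Gross--Zagier.

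The main obstacle is the explicit change of variables: the algebra is error-prone, and the map must send $D_{1/2}$ to exactly the minimal model and not a twist or an isogenous curve. I would carry this out in \textsc{Magma} (\texttt{EllipticCurve} on the genus-one model with the point $(0,3)$, then \texttt{MinimalModel}, \texttt{IsIsomorphic} with \texttt{EllipticCurve("90b3")}, \texttt{Rank}, \texttt{TorsionSubgroup}). I would record the explicit map so that it can be verified by hand.
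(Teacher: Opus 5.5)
Your approach is essentially the paper's. The paper also uses the point $(0,3)$ to pass from the quartic to a Weierstrass cubic and then to the minimal model, and it records the resulting map explicitly: $X=\frac{3(w+3-u)}{2u^2}$, which in your coordinates $u=1/v$, $w=z/v^2$ is just $X=\tfrac32\,(z+3v^2-v)$. The paper takes conductor, rank and torsion from the LMFDB, so your direct checks are a harmless addition. Note that $\#E_{1/2}(\mathbb F_7)=6$, so the prime $7$ alone already forces $E_{1/2}(\Q)_{\mathrm{tors}}\cong\Z/6\Z$.

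There is one correction: your expansion of $g_{1/2}$ is wrong. The linear coefficient is $-6$, not $-3$; your polynomial gives $g_{1/2}(1)=3$, although $u=1$ is a root. The correct expansion is
\[
g_{1/2}(u)=4u^4-4u^3-3u^2-6u+9,\qquad z^2=9v^4-6v^3-3v^2-4v+4,
\]
so $\alpha=-1$ rather than $-\tfrac12$. With the correct coefficients the quartic invariants are $I=369$ and $J=-16146$, which are exactly $c_4$ and $2c_6$ of $E_{1/2}$. Hence $Y^2=X^3-27IX-27J$ is the short model of $E_{1/2}$, namely $Y^2=X^3-9963X+435942$. With your coefficients one gets $I=405$, and a hand computation would not land on $E_{1/2}$.
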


\begin{proof}
One can use the point $(0,3)$ to convert the quartic model to a
Weierstrass equation and then further transform it to the global minimal
model.

An explicit birational map $D_{1/2}\dashrightarrow E_{1/2}$ is given by
\[
X=\frac{3(w+3-u)}{2u^2},
\qquad
Y=
\frac{-4u^3-3u^2-3uw-18u+9w+27}{2u^3}.
\]
The inverse map is
\[
u=\frac{3(X-1)}{Y+X+2},
\qquad
w=-3+u+\frac{2Xu^2}{3}.
\]
A direct substitution verifies that these maps are inverse to one another
on their respective domains of definition and carry the equation of $D_{1/2}$
to that of $E_{1/2}$.
\end{proof}

\begin{thm}\label{thm:60}
For $\theta=\frac{\pi}{3}$, that is, for $\rho=\frac12$, there is no
genuine pair.
\end{thm}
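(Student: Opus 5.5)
The plan is to use Proposition~\ref{prop:E}. Since $E_{1/2}$ has rank $0$ and torsion $\Z/6\Z$, the set $E_{1/2}(\Q)$ consists of exactly six points: the origin $O$ and the multiples of $(3,1)$.

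We first transport these six points back to $D_{1/2}$ using the inverse map $u=\frac{3(X-1)}{Y+X+2}$, $w=-3+u+\frac{2Xu^2}{3}$. The points where this map is undefined, namely $O$ and any point with $Y+X+2=0$, need separate treatment. For those we use the forward map, or equivalently note that they correspond to points of $D_{1/2}$ at infinity or at $u=0$. The outcome should be that the affine rational points of $D_{1/2}$ have $u\in\{0,1,\tfrac32\}$, with $w=\pm3$ at $u=0$, together with the two points at infinity of the quartic model. This accounts for six points, matching $\#E_{1/2}(\Q)=6$. I would confirm this by computing the multiples $2P,\dots,5P$ of $P=(3,1)$ explicitly and substituting them into the inverse map.

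Next we relate rational points of $C_{1/2}$ to those of $D_{1/2}$. For any rational point $(u,y)$ of $C_{1/2}$ with $2u+1\neq0$, the point $(u,4y/(2u+1))$ lies in $D_{1/2}(\Q)$. Hence every rational point of $C_{1/2}$ has $u\in\{0,1,\tfrac32,-\tfrac12\}$ or lies at infinity.

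We then rule out each candidate using the non-degeneracy conditions from Section~\ref{sec:general-case}. The isosceles triangle $\iota(u)$ is non-degenerate only when $-\sqrt{3}/2<u<0$, since $1-\rho^2=\tfrac34$. This immediately excludes $u=0,1,\tfrac32$.

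The remaining value $u=-\tfrac12$ lies in the admissible range, so it needs a direct check. Here $u=\rho-1$, so this is the coincidence point. Substituting $\rho=\tfrac12$ and $u=-\tfrac12$ into the quadratic in $k$, its discriminant $F_{1/2}(-\tfrac12)$ vanishes, giving a double root. That root should be $k=1$ with $q=2\rho-1=0$. Then $k\tau(0)=(1,1,1)$ and $\iota(-\tfrac12)=(1,1,1)$, so the two triangles are congruent (both equilateral) and the pair is not genuine. The points at infinity do not correspond to any finite $u$, so they give nothing.

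The main obstacle is the bookkeeping of the torsion points under the birational map, especially the points where the map is undefined. One must make sure no affine point of $D_{1/2}$ is missed, and in particular confirm that none of them has $u$ strictly between $-\sqrt3/2$ and $0$ other than through the singular fibre $u=-\tfrac12$.
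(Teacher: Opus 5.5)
Your proposal is correct and follows the paper's proof essentially step by step. The six points of $E_{1/2}(\Q)$ give the six points $(0,\pm3)$, $(1,0)$, $(\tfrac32,0)$, $\infty_\pm$ of $\overline{D_{1/2}}$. The only extra rational point of $C_{1/2}$ is the singular point at $u=-\tfrac12$. The range $-\tfrac{\sqrt3}{2}<u<0$ then leaves only this point, which is the equilateral coincidence.

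There is one small bookkeeping slip. The point $4P=(1,-3)$ also satisfies $Y+X+2=0$, and it corresponds to $(\tfrac32,0)$, not to a point at infinity or at $u=0$. This is harmless: the smooth model is isomorphic to $E_{1/2}$, so exhibiting six rational points on it already fixes the list, which is how the paper argues.
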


\begin{proof}
Let $\overline{D_{1/2}}$ denote the smooth projective model of $D_{1/2}$. By
Proposition~\ref{prop:E},
$E_{1/2}(\Q)\cong \Z/6\Z,$
generated by $P=(3,1)$. Its rational points are
\[
\mathcal O,\qquad
P=(3,1),\qquad
2P=(1,1),\qquad
3P=(-3,1),\qquad
4P=(1,-3),\qquad
5P=(3,-5).
\]
Since the birational map of Proposition~\ref{prop:E} extends to an
isomorphism of the smooth projective models, $\overline{D_{1/2}}(\Q)$
also has six points:
\[
\overline{D_{1/2}}(\Q)
=
\left\{
(0,3),(0,-3),(1,0),
\left(\frac32,0\right),
\infty_+,\infty_-
\right\}.
\]

Let $\overline{C_{1/2}}$ denote the projective closure of $C_{1/2}$.
The six rational points of $\overline{D_{1/2}}$ map to rational
points of $\overline{C_{1/2}}$. Together with the rational singular
point $\left(-\frac12,0\right)$, they give all the rational points
\[
\overline{C_{1/2}}(\Q)
=
\left\{
\infty_+,\infty_-,
\left(0,\frac34\right),
\left(0,-\frac34\right),
(1,0),
\left(\frac32,0\right),
\left(-\frac12,0\right)
\right\}.
\]
To obtain a non-degenerate isosceles triangle, we need
$-\frac{\sqrt{3}}{2}
=
-\sqrt{1-\rho^2}
<u<0.$
The only possible value among those above is $u=-\frac12$. However,
$
u=-\frac12=\rho-1,
$
which is the coincidence point. Indeed, substituting
$\rho=\frac12$ and $u=-\frac12$ into \eqref{eq:system} gives
$q=0$ and $k=1$, and hence
$\iota\left(-\frac12\right) = \tau(0) = (1,1,1)$.
Thus this corresponds to the coincidence of the two equilateral
triangles.

Thus none of the  rational points of $C_{1/2}$ leads to a genuine pair.
\end{proof}

\begin{rem}[The supplementary angle]\label{rem:120}
It is natural to ask about the supplementary angle
$\theta=\frac{2\pi}{3}$, that is, $\rho=-\frac12$. In contrast with
the $60^\circ$ case, this is not a singular specialization:
$C_{-1/2}$ is a smooth curve of genus $2$. In fact,
\[
F_{-1/2}(u)
=
\frac1{16}
(4u^2-12u-3)
(4u^4+12u^3+33u^2-18u-27),
\]
and both factors are irreducible over $\Q$, so $C_{-1/2}$ has no
rational Weierstrass point.

A $2$-descent gives
$\operatorname{rank}J_{-1/2}(\Q)\leq1$, while the coincidence class
has positive canonical height. Hence
$\operatorname{rank}J_{-1/2}(\Q)=1$, and a Chabauty--Coleman
computation gives
\[
C_{-1/2}(\Q)
=
\left\{
\infty_+,\infty_-,
\left(0,\pm\frac94\right),
\left(-\frac32,\pm9\right),
\left(-\frac74,\pm\frac{781}{64}\right)
\right\};
\]
see Appendix~\ref{app:rho-minus-half}. For $\rho=-\frac12$, a
non-degenerate isosceles triangle requires
\[
-\frac{\sqrt3}{2}<u<0.
\]
None of the three affine $u$-coordinates above lies in this interval.
Hence the $120^\circ$ case admits no genuine pair.
\end{rem}

\subsection{The case $\rho=47/49$}

We now consider the second singular specialization
$\rho=\frac{47}{49}.$ In contrast with the case $\rho=\frac12$, we will show that this case
admits infinitely many genuine pairs, by reducing it to a single elliptic
curve of rank one.

At $\rho=\frac{47}{49}$, we have
\begin{equation}\label{eq:F4749}
F_{47/49}(u)
=
\frac{1}{49^6}(49u+8)^2g_{47/49}(u),
\end{equation}
where
\[
g_{47/49}(u)
=
(49u-16)(49u+12)(2401u^2-588u-48).
\]
Setting $
w=\frac{49^3y}{49u+8}$
gives the birational model
\begin{equation}\label{eq:D-47}
D_{47/49}\colon\quad
w^2=g_{47/49}(u).
\end{equation}
The quartic $g_{47/49}(u)$ is squarefree, so $D_{47/49}$ is a
smooth curve of genus $1$  and hence gives the normalization of $C_{47/49}$.
It has, for instance, the rational point
$\left(u,w\right)=\left(\frac{16}{49},0\right)$.

\begin{prop}\label{prop:E47}
The genus-one curve $D_{47/49}$ is birational over $\Q$ to the elliptic
curve
\[
E_{47/49}\colon\quad
Y^2=X^3-348X+2497.
\]
This is the global minimal model of the elliptic curve with LMFDB label
\texttt{1260.d4} \cite{lmfdb}. It has conductor $1260$, Mordell--Weil
rank $1$, and
$E_{47/49}(\Q)_{\mathrm{tors}}\cong\Z/6\Z$.
A generator of the free part is $
G=(6,25)$,
and a generator of the torsion subgroup is
$T=(26,105)$.
\end{prop}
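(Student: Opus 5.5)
We prove Proposition~\ref{prop:E47} the same way as Proposition~\ref{prop:E}. The quartic model $D_{47/49}\colon w^2=g_{47/49}(u)$ has the rational point $(u,w)=(\frac{16}{49},0)$, which is a root of the quartic. This makes the conversion to Weierstrass form especially clean. We move that root to infinity by setting $u=\frac{16}{49}+\frac{1}{v}$. After multiplying through by $v^4$ and rescaling, $w'=wv^2$ satisfies a cubic equation in $v$. This is because $g_{47/49}$ vanishes at $u=\frac{16}{49}$, so the $v^4$ coefficient disappears.

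The cubic's leading coefficient is $49\cdot g'_{47/49}$-type data evaluated at $\frac{16}{49}$, namely $49\cdot(49\cdot\frac{16}{49}+12)\cdot(2401\cdot\frac{256}{2401}-588\cdot\frac{16}{49}-48)$ up to powers of $49$. We scale $v$ and $w'$ to make the cubic monic. We then complete the cube, which removes the quadratic term, and clear denominators by a substitution $(X,Y)=(\lambda^2 X',\lambda^3 Y')$. This produces a short Weierstrass model. Running Tate's algorithm, or simply comparing $c_4$ and $c_6$, identifies it with $Y^2=X^3-348X+2497$. We then check that this model is minimal: its discriminant $\Delta=-16(4\cdot(-348)^3+27\cdot 2497^2)$ has no twelfth-power factors that could be removed, and $c_4$, $c_6$ are not divisible by the relevant powers of $2$ and $3$. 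The composite map and its inverse can be written down explicitly and verified by direct substitution, as in Proposition~\ref{prop:E}.

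For the arithmetic invariants we proceed as follows.
\begin{itemize}
\item The conductor $1260=2^2\cdot3^2\cdot5\cdot7$ follows from Tate's algorithm at the primes dividing $\Delta$. Its agreement with the LMFDB label \texttt{1260.d4} can be confirmed by comparing $j$-invariants and minimal models.
\item For torsion, we check directly that $T=(26,105)$ lies on the curve: $26^3-348\cdot26+2497=17576-9048+2497=11025=105^2$. We then compute $2T$ and $3T$ and verify that $6T=\mathcal O$ while $2T,3T\neq\mathcal O$. For the upper bound on torsion, we reduce modulo two small primes of good reduction, say $p=11$ and $p=13$, count points, and use the fact that torsion injects into $\tilde E(\mathbb F_p)$. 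This gives torsion dividing $\gcd(\#\tilde E(\mathbb F_{11}),\#\tilde E(\mathbb F_{13}))$, which should be $6$.
\item The point $G=(6,25)$ lies on the curve, since $216-2088+2497=625$. It has non-integral multiples, so by Nagell--Lutz it has infinite order. Hence the rank is at least $1$.
\end{itemize}

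The main obstacle is the upper bound $\operatorname{rank}\le1$ together with the claim that $G$ generates the free part, rather than some multiple of a generator. For the rank bound, we will use a $2$-descent. Since $E_{47/49}$ has a rational $2$-torsion point, namely $3T$, a descent via $2$-isogeny is available and the Selmer groups are small enough to compute, for example in \textsc{Magma}. Alternatively, we can check that the analytic rank is $1$ and invoke Gross--Zagier--Kolyvagin. To show $G$ is a generator, we compute its canonical height. We then use a lower bound for canonical heights of non-torsion points, via the Cremona--Prickett--Siksek height difference bounds, to rule out $G=nG'+T'$ with $n\ge2$. These facts also match the LMFDB data for \texttt{1260.d4}, which we cite.
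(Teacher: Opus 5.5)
Your proposal is correct and follows the paper's route. Moving the rational root to infinity via $u=\frac{16}{49}+\frac1v$ gives $X=\frac{4v}{7}+12=12+\frac{28}{49u-16}$ and $Y=\frac{7w}{(49u-16)^2}$, which is exactly the paper's map. The only difference is that the paper takes the conductor, rank, torsion and generator directly from the LMFDB, whereas you also sketch independent checks, and these go through: $\Delta=2^4 3^3 5^2 7^3$, $\#\tilde E(\mathbb F_{11})=12$, $\#\tilde E(\mathbb F_{13})=18$, $3T=(11,0)$, and $x(2G)=\frac{276}{25}$.
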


\begin{proof}
One can use the point $\left(\frac{16}{49},0\right)$ to convert the
quartic model to a Weierstrass equation and then further transform it
to the global minimal model.

An explicit birational map $D_{47/49}\dashrightarrow E_{47/49}$ is given by
\[
X=\frac{4(147u-41)}{49u-16},
\qquad
Y=\frac{7w}{(49u-16)^2}.
\]
The inverse map is
\[
u=\frac{4(4X-41)}{49(X-12)},
\qquad
w=\frac{112Y}{(X-12)^2}.
\]
A direct substitution verifies that these maps are inverse to one another
on their respective domains of definition and carry the equation of
$D_{47/49}$ to that of $E_{47/49}$. The arithmetic invariants of $E_{47/49}$
are recorded in the LMFDB \cite{lmfdb}.
\end{proof}

\begin{thm}\label{thm:47-infinite}
For $\rho=\frac{47}{49}$, there are infinitely many distinct genuine
pairs.
\end{thm}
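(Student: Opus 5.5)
The plan is to exploit the rank-one elliptic curve of Proposition~\ref{prop:E47} and show that a positive proportion, in fact infinitely many, of its rational points give genuine pairs. Since $E_{47/49}(\Q)$ has rank $1$, it is infinite, and $E_{47/49}(\R)$ has a nontrivial structure. By a classical density result (the closure of the rational points of a positive-rank elliptic curve contains the identity component $E(\R)^0$, and in general is a union of components meeting the image of $\Z G$ plus torsion), the rational points are dense in at least one real component. Through the explicit birational map $u=\frac{4(4X-41)}{49(X-12)}$, the $u$-coordinates of rational points of $D_{47/49}$ are then dense in the image of that component under this real-analytic map. My first step is therefore to check that this image contains a nonempty open subinterval of the admissible range $-\sqrt{1-\rho^2}<u<0$, where $\sqrt{1-(47/49)^2}=\frac{4\sqrt{6}}{49}$. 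Concretely, $g_{47/49}(u)>0$ must hold on part of this interval. The roots of $g$ are $u=\frac{16}{49}$, $u=-\frac{12}{49}$, and $u=\frac{6\pm 4\sqrt{6}}{49}$, so $g>0$ on $\bigl(\frac{6-4\sqrt6}{49},0\bigr)$, which lies inside the admissible interval.

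Next I would verify that each such rational $u$ gives a genuine pair. From $y=\frac{(49u+8)w}{49^3}$ we get $F_\rho(u)$ as a square, so the quadratic in $k$ has rational roots. Taking $q=1-2(1-\rho)/k$ gives a solution of \eqref{eq:system}. The degree-one cover $C_\rho\to\mathbb{P}^1_u$ means the two signs of $y$ correspond to the two roots $k$. I must then check the non-degeneracy conditions $|q|<1$ and $q<\rho$, and $k>0$, for at least one root on an open $u$-subinterval. I will do this by evaluating at a single explicit point. For instance, I will take a rational point coming from a small multiple of $G$ (or $G+T$), say $G=(6,25)$ itself, which gives $u=\frac{4(24-41)}{49(-6)}=\frac{34}{147}>0$, so that one is inadmissible. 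I would then try $G+T$, $2G$, and so on, until I find a point with $u$ in $\bigl(\frac{6-4\sqrt6}{49},0\bigr)$ whose associated $(k,q)$ satisfy the strict inequalities. Since all the conditions are strict inequalities in continuous functions of $u$, they then hold on an open neighborhood, and by density infinitely many rational points land there.

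Finally, I need distinct pairs up to homothety and non-congruence. The isosceles triangle $\iota(u)$ has shape determined by $u$: the ratio of base to leg is $\frac{2(1-\rho^2-u^2)}{u^2+1-\rho^2}$, which is injective for $u<0$. So infinitely many distinct $u$ give infinitely many non-homothetic pairs. Non-congruence fails only at the coincidence point $u=\rho-1=-\frac{2}{49}$ (and possibly finitely many other $u$, each solving a nontrivial algebraic equation), so removing finitely many values keeps infinitely many genuine pairs.

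I expect the main obstacle to be the explicit inequality check: locating a rational point (or showing the relevant real component maps onto a subinterval) where the chosen root $k$ yields $|q|<1$ and $q<\rho$ simultaneously. Density is only guaranteed on $E(\R)^0$ if the relevant arc lies on the non-identity component, so I may have to use the torsion translate $G+T$, or check that $G$ or a multiple lies on the correct component. I would settle this by direct computation of a few multiples $nG+mT$.
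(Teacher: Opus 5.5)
\textbf{There is a genuine gap: the base point is never produced, and the numbers set up for finding it are wrong.}

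Your overall architecture is the paper's. You use one rational point that gives a genuine pair, openness of the strict inequalities, Poincar\'e--Hurwitz density near that point, and finite fibres of $u$. Your remark that the base-to-leg ratio of $\iota(u)$ is injective for $u<0$ is a good way to get ``up to homothety''.

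The gap is the one substantive step: actually producing such a rational point. You leave it to a search you do not carry out, and the data for that search are incorrect:
\begin{itemize}
\item Since $49^2-47^2=192$, one has $\sqrt{1-\rho^2}=\frac{8\sqrt3}{49}\approx 0.283$, not $\frac{4\sqrt6}{49}\approx 0.200$.
\item The factor $2401u^2-588u-48$ has roots $\frac{6\pm2\sqrt{21}}{49}$, not $\frac{6\pm4\sqrt6}{49}$.
\end{itemize}
So, over the admissible range, the real points of $D_{47/49}$ lie above two arcs. The first is $\bigl(-\frac{8\sqrt3}{49},-\frac{12}{49}\bigr]$, the image of $11\le X<12$ on $E_{47/49}(\R)^0$. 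The second is $\bigl[\frac{6-2\sqrt{21}}{49},0\bigr)$, on the bounded component, where $G$ lies.

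Your bound discards the first arc entirely, and that is exactly where $\pm 2G$ lands. There $u=-\frac{79}{294}$, $q=\frac{421}{448}$, $k=\frac{256}{189}$, which is the point the paper checks to be genuine. On the second arc, the small points $G+nT$ ($0\le n\le 5$) give $u\in\bigl\{\frac{34}{147},\frac{15}{49},\frac{44}{147},\frac{9}{49},-\frac{2}{49},0\bigr\}$. These are only inadmissible, coincidence and degenerate points. So ``try a few multiples'' would not finish the proof as you have set it up.

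\textbf{How to close the gap.} One option is to correct the bound and check $2G$, as the paper does. Alternatively, you can avoid any search by using a point you already mention: the coincidence point $G+4T=\bigl(\frac{94}{9},\pm\frac{35}{27}\bigr)$. This is $u=-\frac{2}{49}$, with $(q,k)=\bigl(\frac{45}{49},1\bigr)$ or $\bigl(0,\frac{4}{49}\bigr)$ on the two sheets.
\begin{itemize}
\item It is a smooth point at which all the non-degeneracy inequalities hold strictly.
\item Since $\iota(u)$ is isosceles, a nearby pair can be congruent only if $\tau(q)$ is isosceles. By Lemma~\ref{lem:iso-theta} this means $q\in\bigl\{0,\frac57,\frac{45}{49}\bigr\}$, using $2(1-\rho)=(2/7)^2$.
\item The function $q$ is nonconstant on $E_{47/49}$ (otherwise $u$ would be a root of a fixed cubic $\Phi_q$). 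So this excludes only finitely many points.
\end{itemize}
Density near $G+4T$ then already gives infinitely many genuine pairs.

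Your worry about real components is unnecessary. For any $R\in E_{47/49}(\Q)$, the set $R+\langle 2G\rangle$ is dense in the component containing $R$. This is the form of Poincar\'e--Hurwitz the paper uses, so once one good rational point is found, density near it is automatic.

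A minor slip: $C_\rho\to\mathbb{P}^1_u$ has degree two, not one.
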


\begin{proof}
Consider the point
$Q=-2G= \left(\frac{276}{25},\frac{101}{125}\right)\in E_{47/49}(\Q)$.
Since $G$ has infinite order, so does $Q$. We will show that $Q$
corresponds to a genuine pair.

Using the inverse birational map in Proposition~\ref{prop:E47}, this
corresponds to
$\left(u,w\right) = \left(-\frac{79}{294},\frac{3535}{36}\right) \in D_{47/49}(\Q)$.
One of the two corresponding $q$-parameters is
$q=\frac{421}{448},$
and \eqref{eq:system} gives $
k=\frac{256}{189}.$
The associated pair is
\[
\iota\left(-\frac{79}{294}\right)
=
\left(
\frac{671}{43218},
\frac{1879}{12348},
\frac{1879}{12348}
\right) \qquad
\mbox{ and }
\qquad
k\tau\left(\frac{421}{448}\right)
=
\left(
\frac{488}{9261},
\frac{869}{5488},
\frac{112879}{1037232}
\right).
\]
Moreover,
$-1<\frac{421}{448}<\frac{47}{49}<1$
and
$
-\frac{8\sqrt3}{49}
=
-\sqrt{1-\left(\frac{47}{49}\right)^2}
<
-\frac{79}{294}
<0.$
Thus both triangles are non-degenerate. The first is isosceles and the
second is scalene, so they are non-congruent. Hence this is a genuine
pair.

We now show that there are infinitely many such pairs. Since
$E_{47/49}(\Q)$ has rank $1$ by Proposition~\ref{prop:E47}, it has
infinitely many rational points. By the Poincar\'e--Hurwitz theorem
\cite[Satz~11, p.~78]{Skolem}, every real neighbourhood of a rational
point of $E_{47/49}$ contains infinitely many rational points of
$E_{47/49}$.

The birational maps above, together with the correspondence of
Sections~\ref{sec:general-case} and \ref{sec:two-geometries}, are regular in a sufficiently small
neighbourhood of $Q$. Since the inequalities defining a genuine pair
are strict, genuineness is an open condition. Thus there is a real
neighbourhood of $Q$ in which every rational point gives a genuine
pair. By the Poincar\'e--Hurwitz theorem, this neighbourhood contains
infinitely many rational points of $E_{47/49}$.

Finally,
$u=\frac{4(4X-41)}{49(X-12)}$
is a nonconstant rational function on $E_{47/49}$ and therefore has
finite fibres. Hence infinitely many of these rational points give
distinct values of $u$, and therefore distinct isosceles triangles and
distinct genuine pairs.

\end{proof}
\begin{coro}\label{cor:finite-infinite}
Let $N(\rho)$ denote the number of distinct genuine pairs, up to
homothety, for a fixed $\rho\in\Q\cap(-1,1)$. Then
\[N(\rho)=\infty \qquad\Longleftrightarrow\qquad \rho=\frac{47}{49}.\]
\end{coro}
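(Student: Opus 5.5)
The direction $\rho=\frac{47}{49}\Rightarrow N(\rho)=\infty$ is exactly Theorem~\ref{thm:47-infinite}, so the work lies in the converse. Fix $\rho\in\Q\cap(-1,1)$ with $\rho\neq\frac{47}{49}$; we show $N(\rho)<\infty$ by splitting into three cases according to the discriminant computation $\operatorname{disc}_u F_\rho=-2^{27}(\rho-1)^{21}(\rho+1)^3(2\rho-1)^2(49\rho-47)$.

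\textbf{Case $\rho=\frac12$.} Theorem~\ref{thm:60} gives $N(\tfrac12)=0$.

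\textbf{Case $\rho\neq\frac12,\frac{47}{49}$.} Here $F_\rho$ is squarefree of degree $6$, so $C_\rho$ is a smooth curve of genus $2$. Faltings' theorem then gives that $C_\rho(\Q)$ is finite.

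It remains to check that every genuine pair arises from a rational point of $C_\rho$, and that each point produces only finitely many pairs up to homothety. Start from a genuine pair. Rescale the isosceles triangle to the normalized form $\iota(u)$ with $u\in\Q$ determined by the pair. The $\theta$-triangle is then $k\tau(q)$ with $k,q\in\Q$, and $(u,q,k)$ satisfies \eqref{eq:system}. Eliminating $q$ shows that $k$ is a rational root of the quadratic in the text, so its discriminant $F_\rho(u)$ is a rational square. Hence $(u,\pm y)\in C_\rho(\Q)$.

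Conversely, fix a rational point with coordinate $u$. There are at most two roots $k$ of the quadratic, and each $k$ determines $q$ through $k(1-q)=2(1-\rho)$. So each $u$ yields at most two pairs. Since $C_\rho(\Q)$ has finitely many $u$-values, only finitely many genuine pairs occur up to homothety.

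The main point to verify is that the normalization is faithful: a rational triangle homothetic to an isosceles triangle with area in $\sqrt{1-\rho^2}\,\Q$ is represented by some $\iota(u)$ with $u$ rational. This holds because the parametrization of the conic $1=y^2+(1-\rho^2)x^2$ by lines through $(0,1)$ covers all rational points except $(0,-1)$, which is degenerate. The same argument shows that every $\theta$-triangle is homothetic to some $\tau(q)$. We also need the condition that the two areas lie in the same $\sqrt{1-\rho^2}\,\Q$ class, which follows from equal area. Finally, the excluded value $k=0$ never gives a genuine pair, so discarding it loses nothing.
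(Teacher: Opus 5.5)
Your proof is correct and follows the paper's argument: Theorem~\ref{thm:47-infinite} handles $\rho=\frac{47}{49}$, Theorem~\ref{thm:60} handles $\rho=\frac12$, and Faltings' theorem applies to the smooth genus-$2$ curve $C_\rho$ for every other $\rho$. You also check that each genuine pair comes from a rational point of $C_\rho$, and that each $u$-value yields at most two pairs via the quadratic in $k$; this makes explicit the step the paper compresses into ``hence only finitely many genuine pairs can occur.''
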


\begin{proof}
If
$\rho\notin\left\{\frac12,\frac{47}{49}\right\},
$
then $C_\rho$ is a smooth curve of genus $2$. By Faltings' theorem \cite{Faltings,Faltings-corr},
$C_\rho(\Q)$ is finite, and hence only finitely many genuine pairs can
occur. For $\rho=\frac12$, Theorem~\ref{thm:60} shows that there are
no genuine pairs, whereas Theorem~\ref{thm:47-infinite} gives
infinitely many for $\rho=\frac{47}{49}$.
\end{proof}

\section{When the $\theta$-triangle is isosceles} \label{sec:isosceles-theta}

In this section we consider two important families arising when the
$\theta$-triangle $\tau(q)$ is itself isosceles, but is not homothetic
to $\iota(u)$. We first determine all values of $q$ for which
$\tau(q)$ is isosceles.

\begin{lem}\label{lem:iso-theta}
Let
$\tau(q) = \bigl(2(\rho-q),\,1-q^{2},\,q^{2}-2\rho q+1\bigr)$
be a non-degenerate rational $\theta$-triangle. Apart from the
coincidence case $q=2\rho-1$, the triangle $\tau(q)$ is isosceles if
and only if
\[
q=0\quad\text{with }\rho>0 ,
\qquad\text{or}\qquad
q=1-\sqrt{2(1-\rho)}
\quad\text{with }\sqrt{2(1-\rho)}\in\Q .
\]
\end{lem}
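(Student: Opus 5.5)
We compare the three sides pairwise, inside the non-degeneracy region $|q|<1$, $\rho>q$ from Section~\ref{sec:general-case}. Write $a=2(\rho-q)$, $b=1-q^{2}$ and $c=q^{2}-2\rho q+1$. The triangle is isosceles exactly when one of $a=b$, $a=c$, $b=c$ holds. Each of these is a polynomial equation in $q$ of degree at most $2$, so the plan is to solve each one and see which roots are admissible.

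First we handle $b=c$. This reads $1-q^{2}=q^{2}-2\rho q+1$, that is, $2q(q-\rho)=0$. The root $q=\rho$ is excluded because non-degeneracy requires $\rho>q$. This leaves $q=0$, and then $\rho>0$ is exactly the condition $\rho>q$. So this case gives the first alternative.

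Next we handle $a=c$. This reads $2\rho-2q=q^{2}-2\rho q+1$, or $q^{2}+2(1-\rho)q+1-2\rho=0$. Its discriminant is $4(1-\rho)^{2}-4(1-2\rho)=4\rho^{2}$, so $q=-(1-\rho)\pm\rho$, giving $q=2\rho-1$ or $q=-1$. The root $q=-1$ fails $|q|<1$, and $q=2\rho-1$ is the coincidence case, which the statement excludes. So this case contributes nothing new.

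Finally we handle $a=b$. This reads $2\rho-2q=1-q^{2}$, or $q^{2}-2q+2\rho-1=0$, so $q=1\pm\sqrt{2(1-\rho)}$. For $q$ to be rational we need $\sqrt{2(1-\rho)}\in\Q$. Since $\rho<1$ this square root is positive, so the plus sign gives $q>1$, which is excluded. The minus sign gives the second alternative. We then check that $q=1-\sqrt{2(1-\rho)}$ really is admissible. Put $s=\sqrt{2(1-\rho)}\in(0,2)$, so $\rho=1-s^{2}/2$. Then $q=1-s$ lies in $(-1,1)$, and $\rho-q=s-s^{2}/2=s(1-s/2)>0$. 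For the converse direction, each listed value substituted into the corresponding equation gives equal sides, and the non-degeneracy checks above show these are honest triangles.

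The only point needing care is the bookkeeping of which roots the constraints exclude. The spurious roots are $q=\rho$, $q=-1$ and $q=1+s$. We should also note that $q=0$ can coincide with $q=2\rho-1$ when $\rho=1/2$; this is the equilateral coincidence, and the phrase ``apart from'' in the statement covers it. Beyond this the argument is routine algebra.
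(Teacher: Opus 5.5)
Your proposal is correct and follows the paper's proof: you solve the three side-equalities to get $q\in\{0,\rho,2\rho-1,-1,1\pm\sqrt{2(1-\rho)}\}$, discard $q=\rho$, $q=-1$ and $q=1+\sqrt{2(1-\rho)}$ using the non-degeneracy conditions, and check that $q=0$ (when $\rho>0$) and $q=1-s$ are admissible via $\rho-q=s(2-s)/2>0$. The only difference is that you write out the explicit factorizations and discriminants, which the paper leaves implicit.
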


\begin{proof}
Set $
s=\sqrt{2(1-\rho)}.$
Equating the three possible pairs of sides of $\tau(q)$ gives
$q\in \{1-s,\,1+s,\,0,\,\rho,\,2\rho-1,\,-1\}$.
The values $q=\rho$ and $q=-1$ give degenerate triangles, while
$q=1+s$ does not satisfy the admissibility conditions $
|q|<1$.
The value $q=2\rho-1$ is admissible precisely when $\rho>0$ and gives
the coincidence case.

For $q=0$, the admissibility conditions reduce to $\rho>0$.
Finally, if $q=1-s$, then $-1<\rho<1$ implies $0<s<2$, and hence
$-1<q<1$. Moreover, $
\rho-q=\frac{s(2-s)}2>0.$
Thus $q=1-s$ is admissible. This proves the result.
\end{proof}

The family with
$q=1-s$ and $s=\sqrt{2(1-\rho)}$
occurs whenever
$-1<\rho<1$ and $2(1-\rho)\in\Q^{\times2}$.
It corresponds to the case where $\theta$ is the apex angle of an
isosceles $\theta$-triangle. In this case, the equations
of \eqref{eq:system} become
$k=s$ and  $4u^{3}-s^{2}(4-s^{2})u-s^{4}(2-s)^{2}=0.$
The second equation factors as
\begin{equation}\label{eq:apex}
\bigl(2u+s(2-s)\bigr)\bigl(2u^{2}-s(2-s)u-s^{3}(2-s)\bigr)=0.
\end{equation}
The linear factor gives
a pair of congruent triangles. Thus a non-coincidence rational solution
can occur only when the quadratic factor has rational roots, that is, when
$(2-s)(2+7s)\in\Q^{\times 2}$.
Setting $w=\sqrt{(2-s)(2+7s)}$, the two remaining roots are
\begin{equation}\label{eq:rootapex}
u_{\pm}=\frac{s}{4}\bigl(2-s\pm w\bigr).
\end{equation}
Since $0<s<2$, we have $w>2-s$, and hence $u_{+}>0$ and $u_{-}<0$. Therefore only $u_{-}$ can give a
non-degenerate isosceles triangle.

Thus this family of genuine pairs is
governed by the conditions
$2(1-\rho)\in\Q^{\times 2}$ and $ (2-s)(2+7s)\in\Q^{\times 2}$, where
$-1<\rho<1$ and  $\rho\neq\tfrac12.$
We will call this the \emph{apex family}.

The family with $q=0$ occurs when $0<\rho<1$, equivalently when
$0<\theta<\frac{\pi}{2}$. Notice that in this case,  the equations
of \eqref{eq:system} become
$k=2(1-\rho)$ and $u^3-(1-\rho^2)u-2\rho(1-\rho)^2=0$.
The second equation factors as
\begin{equation}\label{eq:base}
\bigl(u+(1-\rho)\bigr)
\bigl(u^2-(1-\rho)u-2\rho(1-\rho)\bigr)=0.
\end{equation}
The first factor gives the coincidence point $u=\rho-1$. Thus a
non-coincidence rational solution can occur only when the quadratic
factor has rational roots, that is, when
$(1-\rho)(1+7\rho)\in\Q^{\times 2}.$
Setting $w=\sqrt{(1-\rho)(1+7\rho)}$, the two remaining roots are
\begin{equation}\label{eq:rootbase}
u_{\pm}=\frac{(1-\rho)\pm w}{2}.
\end{equation}
Since $w^2-(1-\rho)^2=8\rho(1-\rho)>0,$ we have \(w>1-\rho\), and hence \(u_+>0\) and $u_-<0$. Therefore, only $u_-$ can give a non-degenerate isosceles triangle.

Thus this
family of genuine pairs is governed by the conditions
$(1-\rho)(1+7\rho)\in\Q^{\times 2},$ where $0<\rho<1,$ and $ \rho\ne\frac12$.
In this case, $\theta$ is one of the two angles
adjacent to the base of an isosceles $\theta$-triangle. We will call
this the \emph{base family}.

Apart from these two families and the coincidence case $q=2\rho-1$,
every non-degenerate $\theta$-triangle is scalene.

\subsection{The apex family}
Assume that $2(1-\rho)\in \Q^{\times 2}$ and let $s=\sqrt{2(1-\rho)}$. By Lemma~\ref{lem:iso-theta} and the preceding discussion,
this corresponds to the apex family. We now parametrize the
square condition $(2-s)(2+7s)\in\Q^{\times 2}$.
\begin{thm} \label{thm:family-A}
The apex family is completely parametrized by
\begin{equation}\label{eq:apex-param}
s_A(t)=\frac{2(t+1)}{2t^2+t+1},
\qquad
\rho_A(t)=
\frac{4t^4+4t^3+3t^2-2t-1}
     {(2t^2+t+1)^2},
\end{equation}
where
$t\in(-1,0)\cap\Q,$ and $ t\ne-\frac12$.
For every such $t$, these formulas give a genuine apex pair, and every
genuine apex pair arises uniquely in this way.
\end{thm}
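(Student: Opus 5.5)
The apex family is cut out by two conditions: $s=\sqrt{2(1-\rho)}\in\Q$ with $0<s<2$, and $(2-s)(2+7s)\in\Q^{\times2}$. So I would parametrize the conic
\[
w^2=(2-s)(2+7s)
\]
by lines through a rational point. The natural choice is the point $(s,w)=(2,0)$, taking the line $w=2t(2-s)$; the factor $2$ is chosen to match the stated normalization. Substituting and cancelling $2-s$ gives $4t^2(2-s)=2+7s$, hence
\[
s=\frac{2(4t^2-1)}{4t^2+7}.
\]
This does not literally match $s_A(t)$, so I would try other base points. With $(s,w)=(0,2)$ and the line $w=2+\lambda s$, one gets $(2-s)(2+7s)=4+12s-7s^2$. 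Comparing this with $4+4\lambda s+\lambda^2s^2$ and cancelling $s$ gives
\[
s=\frac{12-4\lambda}{\lambda^2+7}.
\]
Writing $\lambda$ as a suitable Möbius transform of $t$ should reproduce $s_A(t)=2(t+1)/(2t^2+t+1)$. Concretely, $s_A(t)$ comes from the line through $(s,w)=(0,2)$ given by $w=2+\lambda s$, where $\lambda$ is read off by solving $s_A(t)=(12-4\lambda)/(\lambda^2+7)$. I would verify directly that
\[
(2-s_A)(2+7s_A)=\left(\frac{2w_A(t)}{(2t^2+t+1)^2}\right)^{2}
\]
for an explicit polynomial $w_A(t)$. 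This is a routine identity that certifies the parametrization lands on the conic. Then $\rho_A=1-s_A^2/2$ gives the stated formula for $\rho_A(t)$ after expanding.

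Completeness and uniqueness follow from the standard fact that a line through a rational point of a conic meets it in exactly one further point. Every rational point $(s,w)$ with $s\neq0$ lies on a unique line through the base point, and the slope is a rational function of $(s,w)$. Combined with the invertible change of variable $\lambda\leftrightarrow t$, each rational $(s,w)$ corresponds to exactly one $t$. The sign of $w$ is immaterial, because only $u_-$ is used in \eqref{eq:rootapex} and $u_-$ depends on $w^2$ and on choosing the negative root. So the map should be a bijection onto pairs $(s,|w|)$ once I check that the two preimages of $\pm w$ are not both in the admissible range. I therefore need to check that the map $t\mapsto s_A(t)$ is injective on $(-1,0)$, or at least that $t\mapsto(s,w)$ with the sign convention $w>0$ is injective.

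Next comes the range. I would show that $0<s_A(t)<2$ holds exactly for $t\in(-1,0)$, together with the correct sign of $w$; note that $2t^2+t+1>0$ always. For $s_A>0$ we need $t>-1$. The condition $s_A<2$ is equivalent to $t+1<2t^2+t+1$, that is $t\neq0$. This alone does not pin down $(-1,0)$, so the restriction must come from the sign choice $w>0$: the other half $t>0$ presumably reproduces the same $s$ with $-w$, and I expect $w_A(t)$ to change sign at $t=0$. So $(-1,0)$ picks one representative per $(s,|w|)$, giving uniqueness.

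The excluded value $t=-\tfrac12$ should be exactly the case $\rho=\tfrac12$, where $s=1$; one checks $s_A(-1/2)=1/1=1$. There $u_-$ coincides with the coincidence root, so by Theorem~\ref{thm:60} no genuine pair occurs. Finally, for genuineness I would check three things for all other $t$. First, $k=s>0$ and $q=1-s$ is admissible, which Lemma~\ref{lem:iso-theta} already gives. Second, $-\sqrt{1-\rho^2}<u_-<0$; since $u_-<0$ is shown in the text, I only need $u_-^2<1-\rho^2=s^2(4-s^2)/4$, a polynomial inequality in $t$ on $(-1,0)$. Third, non-congruence: the triangles coincide only when $u_-$ equals the linear-factor root $-s(2-s)/2$, which forces $w=2-s$, hence $s(8s-...)=0$, and I expect this to happen only at $s=1$.

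The main obstacle is this range and sign bookkeeping: proving the polynomial inequality $u_-^2<1-\rho^2$ on all of $(-1,0)$, and pinning down why $(-1,0)$ is exactly one fundamental domain for the sign of $w$. For the inequality I would try factoring the difference in $t$ and showing that each factor has constant sign on the interval.
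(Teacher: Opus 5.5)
Your proposal follows the paper's proof: you parametrize $w^2=(2-s)(2+7s)$ by lines through $(0,2)$. Your slope is $\lambda=-(4t+1)$, which gives $w=-4t(t+2)/(2t^2+t+1)$ and the identity $u_-=\tfrac s4(2-s-w)=ts^2$. That identity settles the sign bookkeeping you flagged as the main obstacle: $u<0\iff t<0$, uniqueness follows from $t=u/s^2$, $t\mapsto -t/(t+1)$ is the involution $w\mapsto -w$ as you guessed, and $1-\rho^2-u^2=-16t^3(t+1)^2/(2t^2+t+1)^4$.

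There is one arithmetic slip in the non-congruence step. The condition $u_-=-s(2-s)/2$ forces $w=3(2-s)$, not $w=2-s$; the latter would give only $s\in\{0,2\}$. With $w=3(2-s)$ you get $9(2-s)=2+7s$, hence $s=1$, i.e.\ $t=-\tfrac12$, which is the conclusion you expected.
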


\begin{proof}
The conic $
w^2=(2-s)(2+7s)$
contains the rational point $(0,2)$. Intersecting it with the line $
w=2-(4t+1)s$
gives, after removing the solution $s=0$,
\[
s=\frac{2(t+1)}{2t^2+t+1},
\qquad
w=-\frac{4t(t+2)}{2t^2+t+1}.
\]
Hence
\[
\rho=1-\frac{s^2}{2}
=
\frac{4t^4+4t^3+3t^2-2t-1}
     {(2t^2+t+1)^2}.
\]
Taking the root $u_-$ in \eqref{eq:rootapex} gives
$
u=ts^2.$ Since $2t^2+t+1>0$ for every $t\in\R$, the condition
$0<s<2$ is equivalent to $t>-1$ and $t\ne0$. Since $u<0$ is
necessary for a non-degenerate isosceles triangle, we must also have
$t<0$. Thus
$-1<t<0.$
For such $t$,
$
1-\rho^2-u^2
=
\frac{-16t^3(t+1)^2}{(2t^2+t+1)^4}>0,$
so $\iota(u)$ is non-degenerate. Moreover, $q=1-s$ satisfies
$|q|<1$, and
$
\rho-q=\frac{s(2-s)}2>0,
$
so the $\theta$-triangle is also non-degenerate.

It remains to exclude the congruent case. The root of the linear
factor in \eqref{eq:apex} is $-\frac{s(2-s)}2$, and
$
u+\frac{s(2-s)}2
=
\frac{4t(t+1)(2t+1)}{(2t^2+t+1)^2}.
$
Thus, for $-1<t<0$, the quadratic branch meets the congruent branch
only when $t=-\frac12$. At this value,
$
s=1,
\rho=\frac12,
q=0,$ and $
u=-\frac12, $
and both triangles are equilateral. Hence $t=-\frac12$ must be
excluded, while every other rational $t\in(-1,0)$ gives a genuine
apex pair.

Conversely, every genuine apex pair comes from the quadratic factor in
\eqref{eq:apex}, and hence from a rational point on the conic
$w^2=(2-s)(2+7s)$. The parameter $t$ is uniquely determined by
$t=\frac{u}{s^2}=\frac{u}{2(1-\rho)}.$
Therefore every genuine apex pair occurs uniquely in the stated
parametrization.
\end{proof}

\begin{coro}\label{cor:apex-density}
The set of rational values $\rho\in(-1,1)$ admitting a genuine apex
pair is dense in $(-1,1)$.
\end{coro}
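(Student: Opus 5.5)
By Theorem~\ref{thm:family-A}, the set of $\rho$ admitting a genuine apex pair contains the image $\rho_A\bigl((-1,0)\cap\Q\setminus\{-\tfrac12\}\bigr)$. Since $\rho_A$ is a rational function with rational coefficients, it sends rational $t$ to rational $\rho$. It is also continuous on $[-1,0]$, because $2t^2+t+1>0$ on $\R$. So the plan is to show that $\rho_A([-1,0])\supseteq[-1,1]$, and then use continuity to transfer density of $\Q\cap(-1,0)$ in $(-1,0)$ to density of the image.

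First I would compute the endpoint values. At $t=0$ we have $\rho_A(0)=-1/1=-1$. At $t=-1$ the numerator is $4-4+3+2-1=4$ and the denominator is $(2-1+1)^2=4$, so $\rho_A(-1)=1$. Equivalently, $s_A(0)=2$ and $s_A(-1)=0$, and $\rho=1-s^2/2$. By the intermediate value theorem, $\rho_A$ maps the interval $(-1,0)$ onto a set containing $(-1,1)$. In fact, since $\rho_A(t)=1-s_A(t)^2/2$ and $s_A>0$ on $(-1,0)$ with $s_A<2$ there, as checked in the proof of Theorem~\ref{thm:family-A}, the image lies inside $(-1,1)$.

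Now I would conclude by a standard continuity argument. Fix a nonempty open interval $I\subseteq(-1,1)$. The preimage $\rho_A^{-1}(I)\cap(-1,0)$ is open by continuity and nonempty by surjectivity onto $(-1,1)$. It therefore contains an open interval, which in turn contains infinitely many rationals $t$. Removing the single excluded value $t=-\tfrac12$ still leaves some rational $t$, and $\rho_A(t)$ is then a rational value in $I$ admitting a genuine apex pair, by Theorem~\ref{thm:family-A}.

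There is no real obstacle here. The only points that need care are checking that the denominator of $\rho_A$ never vanishes and that the excluded value $t=-\tfrac12$ (which gives $\rho=\tfrac12$) does not matter, which holds because only one point is removed.
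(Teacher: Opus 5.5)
Your proof is correct and follows essentially the same route as the paper: you take the preimage under the continuous map $\rho_A$ of an open interval in $(-1,1)$, which is open and nonempty, and pick a rational $t\neq-\frac12$ in it. The only difference is that the paper obtains surjectivity onto $(-1,1)$ from strict monotonicity ($\rho_A'(t)=\frac{8t(t+1)(t+2)}{(2t^2+t+1)^3}<0$ on $(-1,0)$, which makes $\rho_A$ a homeomorphism), whereas you obtain it from the endpoint values and the intermediate value theorem, and that already suffices for density.
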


\begin{proof}
The function
\[
\rho_A(t)=
\frac{4t^4+4t^3+3t^2-2t-1}
     {(2t^2+t+1)^2}
\]
satisfies
\[
\rho_A'(t)
=
\frac{8t(t+1)(t+2)}
     {(2t^2+t+1)^3}<0
\qquad (-1<t<0),
\]
and
\[
\lim_{t\to-1^+}\rho_A(t)=1,
\qquad
\lim_{t\to0^-}\rho_A(t)=-1.
\]
Hence $\rho_A$ maps $(-1,0)$ homeomorphically onto $(-1,1)$.
Every nonempty open subinterval therefore has a nonempty open
preimage, which contains a rational
$t\ne-\frac12$. By Theorem~\ref{thm:family-A}, the corresponding
rational value $\rho_A(t)$ admits a genuine apex pair.
\end{proof}

\begin{prop}\label{prop:generic-rank}
Let $J^A$ be the Jacobian of the generic apex-family curve $C_{\rho(t)}\colon y^2=F_{\rho(t)}(u) $
over $\Q(t)$, where $
s(t)=\frac{2(t+1)}{2t^2+t+1}$ and $
\rho(t)=1-\frac12s(t)^2.$
Then
$\operatorname{rank}J^A(\Q(t))=1$.
\end{prop}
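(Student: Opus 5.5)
We prove $\operatorname{rank}J^A(\Q(t))=1$ by matching a lower bound from an explicit point with an upper bound from specialization.

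\textbf{Lower bound.} Over $\Q(t)$ the curve $C_{\rho(t)}$ has two obvious rational divisors of degree zero:
\begin{itemize}
\item the difference of the two points at infinity, $D_\infty=\infty_+-\infty_-$;
\item a divisor built from the generic apex point $P_A=(u,y)$, where $u=t\,s(t)^2$ is the root $u_-$ from Theorem~\ref{thm:family-A} and $y$ is read off from the discriminant of the quadratic in $k$.
\end{itemize}
We also have the degenerate points $(0,\pm\,\cdot)$ and the coincidence points $(\rho-1,\pm\,\cdot)$. From these we form divisor classes such as $[P_A-\infty_+]$ and $[(\rho-1,\ast)-\infty_+]$ in $J^A(\Q(t))$.

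To show that at least one of them has infinite order, it suffices to specialize. Choose a rational $t_0\in(-1,0)$, say $t_0=-1/3$. Check that $C_{\rho(t_0)}$ is smooth, i.e.\ $\rho(t_0)\notin\{\frac12,\frac{47}{49}\}$. Then verify in \textsc{Magma} that the image of the class has nonzero canonical height, or simply non-torsion order, in $J_{\rho(t_0)}(\Q)$. Specialization is a homomorphism, so a point with non-torsion image is itself non-torsion. This gives $\operatorname{rank}\ge1$.

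\textbf{Upper bound.} We use Silverman's specialization theorem for abelian varieties over $\Q(t)$. More elementarily, the specialization map $J^A(\Q(t))\to J_{\rho(t_0)}(\Q)$ is injective for all but finitely many $t_0$. For a $\Q(t)$-rational abelian variety that is not isotrivial, there is also a simple injectivity criterion: reduction at a good place $t=t_0$ is injective on the torsion-free part modulo a controlled kernel.

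The cleanest route is the following.
\begin{enumerate}
\item Find a specialization $t_0$ at which $J_{\rho(t_0)}$ has rank $1$. We would establish this by a $2$-descent in \textsc{Magma} (\texttt{RankBound}), giving the upper bound $1$, together with the non-torsion point from the lower bound.
\item Invoke injectivity of specialization. To make this rigorous without appealing to Silverman's theorem only for "almost all" $t_0$, use the standard argument: the specialization map is injective on $J^A(\Q(t))$ whenever the fibre is smooth and the Néron–Severi/height pairing is compatible. Alternatively, specialize at two good values $t_0,t_1$ and compare $2$-Selmer ranks.
\end{enumerate}

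The simplest rigorous device is injectivity of the map $J^A(\Q(t))/2\to \operatorname{Sel}^{(2)}(J_{\rho(t_0)}/\Q)$. This can be arranged by bounding the function-field $2$-Selmer group directly. One performs a $2$-descent over $\Q(t)$ using the factorization $F_{\rho(t)}=Q_{\rho(t)}S_{\rho(t)}$, a quadratic times a quartic, which gives a rational $2$-torsion-type structure with unramified quadratic/sextic étale algebra $L=\Q(t)[u]/(F)$. One then shows that the image of $J^A(\Q(t))$ in $L^\times/L^{\times2}$ modulo $\Q(t)^\times$ has dimension $\dim J^A(\Q(t))[2]+1$. Having Selmer support only at the bad places (the zeros of the discriminant in $t$), this reduces to finitely many square-class checks.

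\textbf{Main obstacle.} The main difficulty is the upper bound: rigorously excluding a second independent point over $\Q(t)$. We would try the specialization approach first. Pick $t_0$ with a fully proved rank $1$ (via $2$-descent) and a smooth fibre, and use the fact that specialization $J^A(\Q(t))\to J_{\rho(t_0)}(\Q)$ is injective at any place of good reduction for $J^A$ as a family. Since $J^A$ has trivial $\Q(t)/\Q$-trace, which we check by seeing that the Igusa invariants are nonconstant in $t$, its Mordell–Weil group is finitely generated, and injectivity on torsion plus rank comparison gives $\operatorname{rank}\le 1$.

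If that injectivity is hard to certify, the fallback is the direct function-field $2$-descent sketched above, computing the square classes of $u-\alpha$ over the splitting of $Q_{\rho(t)}$ and $S_{\rho(t)}$.
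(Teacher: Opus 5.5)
\textbf{Verdict: there is a gap in the upper bound.}

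Your lower bound is fine and is the paper's. Specialization is a homomorphism, so a class whose specialization has positive canonical height is non-torsion in $J^A(\Q(t))$; the paper uses the coincidence class $[P-\infty_+]$ at $t=-\frac1{12}$.

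The upper bound rests on the claim that $J^A(\Q(t))\to J_{\rho(t_0)}(\Q)$ is injective whenever the fibre at $t_0$ is smooth. That claim is false.
\begin{itemize}
\item Good reduction (in residue characteristic $0$) only gives injectivity on torsion.
\item A section of infinite order can specialize to a torsion point on a perfectly smooth fibre, and two independent sections can specialize to dependent ones.
\item These are exactly the exceptional fibres in Silverman's theorem, which says there are finitely many of them but does not certify any particular $t_0$.
\end{itemize}
So ``injectivity on torsion plus rank comparison'' proves nothing, and comparing $2$-Selmer ranks at two fibres does not help either. Note also that $t_0=-\frac13$ could not serve for the upper bound: there the rank is $2$ by Proposition~\ref{prop:apex-higher-ranks}. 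Separately, nonconstant Igusa invariants show non-isotriviality, not trivial trace, although finite generation holds regardless.

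Your fallback, a descent over the function field followed by injective specialization of the descent image, is the right idea and is in the spirit of the paper. As written, though, it omits three ingredients that make it a proof.

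\begin{itemize}
\item[(i)] \emph{Finiteness of the bounding group.} Restricting ramification to the zeros of the discriminant in $t$ does not give a finite group, because every class in $\Q^\times/\Q^{\times2}$ is unramified at all places of $\Q(t)$. Following Stoll, the paper works over $\Z[n]$ with $n=-1/t$. The descent images then lie in the finite group $V$ generated by $-1$, $2$, and the irreducible factors of the two discriminants and of the Richelot determinant $R$.
\item[(ii)] \emph{Injectivity on that group.} You must check that specialization at the chosen fibre is injective on $V$. At $n=12$ this is the independence of $-1,2,12,11,13,10,23,133,134,395,398,12091$ modulo squares.
\item[(iii)] \emph{Surjectivity on the kernel.} Specialization must be surjective on the kernel of the isogeny you descend along. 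Only then does injectivity on descent images upgrade to injectivity on $J^A(\Q(t))$. For a full $2$-descent this is a genuine condition. Over $\Q(n)$ the sextic has four rational roots and one irreducible quadratic factor, so $J^A(\Q(t))[2]$ has order $8$. That group grows on fibres where the quadratic splits, and then surjectivity fails.
\end{itemize}

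The paper avoids the difficulty in (iii) by rewriting the curve as $Y^2=G_1G_2G_3$ with three $\Q(n)$-rational quadratics. This gives a Richelot isogeny $\phi$ whose kernel, and that of $\widehat\phi$, is already fully rational. Both kernels therefore specialize surjectively wherever the Richelot determinant and the discriminants are nonzero. The isogeny--dual-isogeny criterion then gives injectivity of specialization at $n=12$, where a $2$-descent shows the rank is at most $1$.
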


\begin{proof}
For computational convenience, we set
$n=-\frac1t$ and $A=n^2-n+1$.
Since $\Q(n)=\Q(t)$, it is enough to determine the rank over $\Q(n)$.
In terms of $n$, we have
$s=\frac{2n(n-1)}{n^2-n+2}$.
After the change of variables
\[
X=\frac{(n^2-n+2)^2}{4n(n-1)}u,
\qquad
Y=\frac{(n^2-n+2)^6}{64n^3(n-1)^3}y,
\]
the generic apex curve becomes
\[
\mathcal C_A\colon\quad
Y^2=G_1(X)G_2(X)G_3(X),
\]
where
$G_1(X)=(X-n)(X+1)$,
$G_2(X)=(X+n-1)(X-A)$,
and
$G_3(X)=X^2+AX+n(n-1)A$.
The identities in this proof are verified by direct computation in
Appendix~\ref{app:generic-rank-apex}.
The factorization $G_1G_2G_3$ defines a Richelot isogeny $\phi\colon J^A\longrightarrow \widehat J^A$
over $\Q(n)$; see, for example, \cite[Section~4]{BruinDoerksen}.
Indeed, the determinant of the coefficient matrix of
$G_1,G_2,G_3$ is
$n^2R$,
where
$R=n^4-6n^3+13n^2-13n+7$,
which is nonzero in $\Q(n)$. In particular, the kernel of $\phi$ is
defined over $\Q(n)$.

We now apply the specialization argument of
\cite[Section~5]{Stoll2019}. In particular,
\cite[Remark~5.4]{Stoll2019} states that the isogeny--dual-isogeny
argument of Remark~5.3 applies to genus-$2$ Jacobians admitting a
Richelot isogeny whose kernel is rational over the function field.

For the present family, the discriminant of $G_1G_2G_3$ factors as
\[
\begin{aligned}
\operatorname{disc}_X(G_1G_2G_3)
={}&-n^{12}(n-1)^{12}(n-2)^2(n+1)^2(2n-1)^2\\
&{}\times(n^2-n+1)^3(n^2-n+2)^6\\
&{}\times(3n^2-3n-1)(3n^2-3n+2)^2.
\end{aligned}
\]
For the dual Richelot construction, put
\[
\widehat G_1=[G_2,G_3],
\qquad
\widehat G_2=[G_3,G_1],
\qquad
\widehat G_3=[G_1,G_2],
\]
where
$[F,G]=F'G-FG'$.
Up to the usual scalar factor given by the Richelot determinant, the
dual curve is defined by
$\widehat G_1\widehat G_2\widehat G_3$.
This scalar factor introduces no additional irreducible divisors beyond
those already appearing below. A direct calculation gives
\[
\begin{aligned}
\operatorname{disc}_X
(\widehat G_1\widehat G_2\widehat G_3)
={}&2^6n^{36}(n-1)^6(n-2)(n+1)^4(2n-1)\\
&{}\times(n^2-n+1)^3(n^2-n+2)^3\\
&{}\times(3n^2-3n-1)^2(3n^2-3n+2)R^{12}.
\end{aligned}
\]
Let $K=\Q(n)$. Since the kernels of $\phi$ and $\widehat\phi$
are $K$-rational of order $4$, after choosing generators their
Kummer descent targets may be identified with subgroups of
$(K^\times/K^{\times2})^2$
for each of $\phi$ and $\widehat\phi$. We denote the corresponding
descent maps by $\delta_\phi$ and $\delta_{\widehat\phi}$.

As in the specialization argument of
\cite[Section~5, Remarks~5.3--5.4]{Stoll2019}, the coordinates of
these descent maps are represented by square classes unramified away
from $2$ and the prime divisors in $\Z[n]$ of the discriminants of
the two Richelot models. Hence the images of both descent maps are
contained in a finite product of copies of the subgroup
$
V\subset K^\times/K^{\times2}$
generated by
\[
\begin{gathered}
-1,\ 2,\ n,\ n-1,\ n+1,\ n-2,\ 2n-1,\\
n^2-n+1,\ n^2-n+2,\ 3n^2-3n-1,\ 3n^2-3n+2,\ R.
\end{gathered}
\]
We now specialize at
$n=12$ or equivalently, $t=-\frac1{12}$.
The preceding generators specialize to
\[
-1,\ 2,\ 12,\ 11,\ 13,\ 10,\ 23,\ 133,\ 134,\ 395,\ 398,\ 12091.
\]
Their square classes are linearly independent in
$\Q^\times/\Q^{\times2}$. Indeed,
\[
\begin{gathered}
12\sim3,\qquad
133=7\cdot19,\qquad
134=2\cdot67,\\
395=5\cdot79,\qquad
398=2\cdot199,\qquad
12091=107\cdot113.
\end{gathered}
\]
Thus, in any relation among these square classes, the primes
$3,\ 11,\ 13,\ 23,\ 19,\ 67,\ 79,\ 199,\ 107$
successively force the exponents of
$12,\ 11,\ 13,\ 23,\ 133,\ 134,\ 395,\ 398,\ 12091$
to vanish. The prime $5$ then forces the exponent of $10$ to vanish,
the prime $2$ forces the exponent of $2$ to vanish, and finally the
sign forces the exponent of $-1$ to vanish. Hence specialization at
$n=12$ is injective on $V$, and therefore coordinatewise on the
square-class bounding groups for the Richelot descent and its dual.

Moreover, the Richelot determinant and both discriminants above are
nonzero at $n=12$. Hence the kernels of the Richelot isogeny and its
dual specialize without degeneration. Since each kernel has four
geometric points and all four are already rational over $\Q(n)$,
specialization is surjective on these kernel groups.

Thus the hypotheses of the isogeny--dual-isogeny specialization
argument of \cite[Corollary~5.2 and Remarks~5.3--5.4]{Stoll2019}
are satisfied: specialization is injective on the square-class
bounding groups for the two descents and surjective on the two kernel
groups. Therefore the specialization homomorphism
$J^A(\Q(n))\longrightarrow J^A_{12}(\Q)$
is injective.

At $n=12$ we have
$t=-\frac1{12}$ and $\rho=-\frac{4223}{4489}$.
Let $J^A_{12}$ denote the Jacobian of this specialization. A
$2$-descent gives
$\operatorname{rank}J^A_{12}(\Q)\leq1$.
On the other hand, the divisor class obtained from the coincidence
point has positive canonical height, and hence infinite order.
Therefore
$\operatorname{rank}J^A_{12}(\Q)=1$.
Since specialization is injective, this gives
$\operatorname{rank}J^A(\Q(n))\leq1$.

It remains to prove the opposite inequality. Let
$P= \left( \rho-1,\, 2(\rho-1)^2(2\rho-1) \right)$
be one of the coincidence points on the generic curve, and choose one
of the rational points at infinity, say $\infty_+$. Then
$[P-\infty_+]\in J^A(\Q(n))$.
Its specialization at $n=12$ is the coincidence class considered
above, which has positive canonical height and therefore infinite
order. Consequently $[P-\infty_+]$ cannot be torsion over $\Q(n)$.
Thus
$\operatorname{rank}J^A(\Q(n))\geq1$.

Since $\Q(n)=\Q(t)$, combining the two inequalities gives
\[
\operatorname{rank}J^A(\Q(t))=1.
\]

\end{proof}

\subsection{The base family}

Assume that $q=0$, $0<\rho<1$ with $\rho\neq \frac{1}{2}$, and $(1-\rho)(1+7\rho)\in\Q^{\times 2}$ and let $
w=\sqrt{(1-\rho)(1+7\rho)}$. By Lemma~\ref{lem:iso-theta}, this corresponds to the base family.

\begin{thm}\label{thm:family-B}
The base family is completely parametrized by
\[
\rho_B(m)=\frac{6-2m}{m^2+7},
\qquad
u_B(m)=\frac{(m+1)(m-3)}{m^2+7},
\]
where
$m\in(-1,3)\cap\Q,$ and $ m\ne1$.
For every such $m$, these formulas give a genuine base pair, and every
genuine base pair arises uniquely in this way.
\end{thm}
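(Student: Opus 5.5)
The plan is to parametrize the conic $w^2=(1-\rho)(1+7\rho)$ rationally, exactly as in the proof of Theorem~\ref{thm:family-A}. Then I will translate the admissibility and non-congruence conditions of the base family into conditions on the parameter $m$.

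\textbf{Rational parametrization.} The conic has the rational point $(\rho,w)=(1,0)$. Rather than derive the pencil of lines, I will verify the given formulas directly. With $\rho=\rho_B(m)=\frac{6-2m}{m^2+7}$ one computes
\[
1-\rho=\frac{(m+1)^2}{m^2+7},\qquad 1+7\rho=\frac{(m-7)^2}{m^2+7}.
\]
Hence $(1-\rho)(1+7\rho)$ is a square, with signed root
\[
w=\frac{(m+1)(7-m)}{m^2+7}.
\]
Conversely, set $r=w/(1-\rho)$, which is well defined since $\rho\neq1$. Then $r=\frac{7-m}{m+1}$, so
\[
m=\frac{7-r}{1+r}.
\]
Thus $m\mapsto(\rho,w)$ is a bijection between $\Q\setminus\{-1\}$ and the affine rational points of the conic with $\rho\ne1$ (up to the point $r=-1$). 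This gives the uniqueness of $m$ once the sign of $w$ is fixed.

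\textbf{Translating the conditions.} By \eqref{eq:rootbase} and the discussion before it, only the root $u_-$ can occur, and $u_-$ is defined using the positive square root $w>0$. For $w>0$ we need $m\in(-1,7)$. The condition $\rho>0$ is equivalent to $m<3$, and $\rho<1$ holds automatically for $m\ne-1$. So the admissible range is $m\in(-1,3)$.

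Substituting into \eqref{eq:rootbase} gives
\[
u_-=\frac{(1-\rho)-w}{2}=\frac{(m+1)\bigl((m+1)-(7-m)\bigr)}{2(m^2+7)}=\frac{(m+1)(m-3)}{m^2+7}=u_B(m),
\]
which is negative on $(-1,3)$. The value $\rho=\frac12$ corresponds to $m^2+4m-5=0$, that is $m\in\{1,-5\}$, and only $m=1$ lies in the range, so $m=1$ is excluded. The $\theta$-triangle is $\tau(0)$ rescaled by $k=2(1-\rho)$, which is non-degenerate since $\rho>0$ (Lemma~\ref{lem:iso-theta}).

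\textbf{Main obstacle.} The step needing real work is the non-degeneracy of $\iota(u_B(m))$, namely $1-\rho^2-u^2>0$. I expect $1-\rho^2-u_B(m)^2$ to factor, as in the apex case, as a positive multiple of something like $(m+1)^2(3-m)(\dots)/(m^2+7)^2$. I will compute this factorization and check its sign on $(-1,3)$; if an extra factor appears, I will verify that it has no roots in the interval.

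\textbf{Non-congruence.} The coincidence root is $u=\rho-1$. I will compute
\[
u_B(m)-(\rho-1)=\frac{(m+1)(m-3)+(m+1)^2}{m^2+7}=\frac{2(m+1)(m-1)}{m^2+7},
\]
which vanishes in the range only at $m=1$, the already excluded equilateral case. The $\theta$-triangle is isosceles with $\theta$ a base angle, while the $\iota$-triangle has a different shape away from $m=1$, so the pair is genuine.

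\textbf{Converse.} Every genuine base pair gives a rational point $(\rho,w)$ on the conic with $w>0$ and $0<\rho<1$. The inverse formula above then recovers a unique $m$, necessarily in $(-1,3)\setminus\{1\}$.
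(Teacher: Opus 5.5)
Your proposal follows the paper's proof essentially step for step. You use the same parametrization of the conic, inverting it via lines through $(1,0)$ and $m=\frac{7-r}{1+r}$ instead of the paper's $m=\frac{w-1}{\rho}$ (an equivalent choice), the same selection of $u_-$ with $w>0$ giving $m\in(-1,3)$, and the same coincidence computation $u_B(m)-(\rho-1)=\frac{2(m-1)(m+1)}{m^2+7}$. The step you left unexecuted is immediate and has no factor $3-m$: $1-\rho^2-u_B(m)^2=\frac{4(m+1)^3}{(m^2+7)^2}>0$ for $m>-1$, which is exactly the paper's check. Alternatively, $u(u^2-(1-\rho^2))=2\rho(1-\rho)^2>0$ together with $u<0$ gives $u^2<1-\rho^2$ without any computation.
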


\begin{proof}
The conic
$w^2=(1-\rho)(1+7\rho)$ contains the rational point $(0,1)$. Intersecting it with the line $w=1+m\rho$ gives
\[
\rho=\frac{6-2m}{m^2+7},
\qquad
w=\frac{(7-m)(m+1)}{m^2+7}.
\]
Taking the root $u_-$ in  \eqref{eq:rootbase} gives
$u=\frac{(m+1)(m-3)}{m^2+7}$.
The conditions $0<\rho<1$ and $w>0$ are equivalent to
$-1<m<3$.
For such $m$, we have $u<0$ and
$1-\rho^2-u^2 = \frac{4(m+1)^3}{(m^2+7)^2}>0$,
so $\iota(u)$ is non-degenerate. Since $\rho>0$, the triangle
$\tau(0)$ is also non-degenerate.

Finally, $u-(\rho-1) = \frac{2(m-1)(m+1)}{m^2+7}$,
so the pair is coincident exactly when $m=1$, corresponding to
$\rho=\frac12$. Excluding this value gives a genuine pair.

Conversely, from \eqref{eq:base}, every genuine base pair must arise from
$u^2-(1-\rho)u-2\rho(1-\rho)=0$.
Choosing the positive
square root $w$ and parametrizing as before gives $m=(w-1)/\rho$ uniquely, so every genuine
base pair occurs uniquely in the stated parametrization.
\end{proof}

\begin{prop}\label{prop:base-generic-rank}
Let $J^B$ be the Jacobian of the generic base-family curve
$C_{\rho(m)}\colon y^2=F_{\rho(m)}(u)$ over $\Q(m)$, where
$
\rho(m)=\frac{6-2m}{m^2+7}.
$
Then
$\operatorname{rank}J^B(\Q(m))\geq2$.

\end{prop}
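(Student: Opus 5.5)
To prove $\operatorname{rank}J^B(\Q(m))\geq2$ it suffices to write down two divisor classes in $J^B(\Q(m))$ and show that they are independent. Independence will be checked after specializing at a single value $m_0$. Specialization $J^B(\Q(m))\to J^B_{m_0}(\Q)$ is a group homomorphism. So if the images of two classes generate a subgroup of rank $2$ in $J^B_{m_0}(\Q)$, the classes themselves are independent over $\Q(m)$. This direction needs no injectivity of specialization, so the delicate Richelot/Stoll argument of Proposition~\ref{prop:generic-rank} is not required.

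\textbf{Rational points on the generic curve.} Over $\Q(m)$ the curve $C_{\rho(m)}$ has the two points at infinity, the degenerate points $(0,\pm2(1-\rho)^2(1+\rho))$, and the coincidence points $(\rho-1,\pm2(\rho-1)^2(2\rho-1))$. The base family also supplies the extra point $u_B(m)=\frac{(m+1)(m-3)}{m^2+7}$. Indeed, for $q=0$ the perimeter equation gives $k=2(1-\rho)\in\Q(m)$, so the discriminant $F_\rho(u_B)$ of the quadratic in $k$ is a square in $\Q(m)$. Hence there is a point $P_B=(u_B,y_B)$ with $y_B\in\Q(m)$; I would compute $y_B$ explicitly by substitution. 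The positive root $u_+=\frac{(1-\rho)+w}{2}$ of \eqref{eq:base} also gives points. I would take the candidate classes
\[
D_1=[P_{\mathrm{coin}}-\infty_+],\qquad D_2=[P_B-\infty_+],
\]
and keep $[(0,\cdot)-\infty_+]$ and the class of $u_+$ as alternates in case $D_1,D_2$ turn out dependent.

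\textbf{Independence by specialization.} I would choose a rational $m_0\in(-1,3)$ with $m_0\neq1$ such that $C_{\rho(m_0)}$ is smooth of genus $2$. This requires $\rho(m_0)\neq\frac12,\frac{47}{49}$ and nonvanishing of the discriminant. Good candidates are $m_0=0$, giving $\rho=\frac67$, or $m_0=2$, giving $\rho=\frac2{11}$. In \textsc{Magma} I would compute the canonical height pairing matrix of the specialized $D_1,D_2$ on $J^B_{m_0}(\Q)$ and check that its determinant, the regulator, is positive. A positive regulator means the two specialized classes are independent modulo torsion, so they generate a subgroup of rank $2$, and therefore $\operatorname{rank}J^B(\Q(m))\ge2$.

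A height-free alternative is to reduce modulo two or three primes of good reduction. One finds the subgroup of $J_{m_0}(\mathbb F_p)$ generated by the reductions and shows that no nontrivial combination $aD_1+bD_2$ can be torsion. This uses the injectivity of reduction on torsion for odd $p$ of good reduction.

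\textbf{Main obstacle.} The hard step is choosing the right pair of classes. Relations among the trivial points are plausible: for instance the hyperelliptic involution gives $[P^-]=-[P^+]$ up to the class of $\infty_+-\infty_-$. The degenerate point, the coincidence point and $P_B$ could also satisfy a linear relation, since $F_\rho$ may factor further over $\Q(m)$. I would first compute the full height pairing matrix on the classes of $\infty_--\infty_+$, the degenerate point, the coincidence point, $P_B$ and the $u_+$-point at one specialization. From that matrix I would extract two classes with nonzero regulator. The remaining height computations are routine.
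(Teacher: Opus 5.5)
Your proposal is correct and is essentially the paper's proof. The paper takes exactly the coincidence class $[P_{\mathrm c}(m)-\infty_+]$ and the base-point class $[P_B(m)-\infty_+]$, specializes at $m=\frac73$ (so $\rho=\frac{3}{28}$), and concludes from a positive N\'eron--Tate height-pairing determinant, using only that specialization is a homomorphism; your candidate $m_0=2$ would also work, since Proposition~\ref{prop:base-rank-two-examples} records a positive determinant for the same two classes at $\rho=\frac{2}{11}$.
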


\begin{proof}
In addition to the genuine base point of Theorem~\ref{thm:family-B},
\[
P_B(m)=
\left(
\frac{(m+1)(m-3)}{m^2+7},
\frac{2(m-1)(m+1)^4(m+5)}{(m^2+7)^3}
\right),
\]
the curve has the coincidence point
\[
P_{\mathrm c}(m)=\left(
-\frac{(m+1)^2}{m^2+7},
-\frac{2(m-1)(m+1)^4(m+5)}{(m^2+7)^3}
\right).
\]
Choosing one of the rational points at infinity, say $\infty_+$, we obtain the divisor classes in $J^B(\Q(m))$
$D_B(m)=[P_B(m)-\infty_+],$ and $D_{\mathrm c}(m)=[P_{\mathrm c}(m)-\infty_+]$.
We prove that these two classes are independent by specializing at
$m=\frac73$ and $\rho=\frac3{28}$.
At this specialization,
\[
P_B=
\left(
-\frac5{28},
\frac{6875}{5488}
\right),
\qquad
P_{\mathrm c}=
\left(
-\frac{25}{28},
-\frac{6875}{5488}
\right).
\]
The specialized curve is smooth. A canonical-height computation in
\textsc{Magma} (see Appendix~\ref{app:generic-rank-base}) gives
\[
\widehat h(D_{\mathrm c})
=
0.8666169291607799\ldots,
\qquad
\widehat h(D_B)
=
1.9604493795720781\ldots,
\]
and
$\langle D_{\mathrm c},D_B\rangle = 0.5827133855804506\ldots$.
Hence the determinant of their N\'eron--Tate height-pairing matrix is
\[
\widehat h(D_{\mathrm c})\widehat h(D_B)
-
\langle D_{\mathrm c},D_B\rangle^2
=
1.3594037313652797\ldots>0.
\]
Thus the specialized classes are independent modulo torsion.

If $D_{\mathrm c}(m)$ and $D_B(m)$ were dependent modulo torsion in
$J^B(\Q(m))$, then, after multiplying by the order of the torsion
class, we would obtain a nontrivial integral relation between them.
Specializing at $m=7/3$ would give the same relation between the two
specialized classes, contradicting the positive determinant above.
Therefore $D_{\mathrm c}(m)$ and $D_B(m)$ are independent in
$J^B(\Q(m))$ modulo torsion, and we conclude that
$\operatorname{rank}J^B(\Q(m))\geq2$.

\end{proof}

\subsection{The intersection of both families}

\begin{thm}\label{thm:two-pairs-dense}
The set of rational values $\rho\in(0,1)$ having at least two
distinct genuine pairs is dense in $(0,1)$. More precisely, these
values may be chosen so that one genuine pair belongs to the apex
family and the other belongs to the base family.
\end{thm}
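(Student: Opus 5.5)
We need $\rho\in(0,1)$ lying in both families. By Theorem~\ref{thm:family-A} and Theorem~\ref{thm:family-B}, it suffices to find many rational $\rho\in(0,1)$, $\rho\neq\frac12$, such that simultaneously
\[
2(1-\rho)\in\Q^{\times2},\quad (2-s)(2+7s)\in\Q^{\times2}\ (s=\sqrt{2(1-\rho)}),\quad (1-\rho)(1+7\rho)\in\Q^{\times2}.
\]
The first two conditions are exactly $\rho=\rho_A(t)$ for some rational $t\in(-1,0)$, $t\ne-\frac12$. We then impose the base condition on $\rho_A(t)$. Since $1-\rho_A(t)=s_A(t)^2/2$, the condition $(1-\rho)(1+7\rho)\in\Q^{\times2}$ becomes $2(1+7\rho_A(t))\in\Q^{\times2}$. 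Clearing the square denominator $(2t^2+t+1)^2$, this reads
\[
z^2=2\bigl((2t^2+t+1)^2+7(4t^4+4t^3+3t^2-2t-1)\bigr)=2(32t^4+32t^3+26t^2-12t-6),
\]
a quartic genus-one curve $\mathcal E$ in $(t,z)$. It only remains to show that rational points of $\mathcal E$ with $t$ in the preimage $\rho_A^{-1}((0,1))$ are dense there. One genuine pair then comes from the apex family, and the other from the base family via $m=(w-1)/\rho$. The two pairs are distinct, because the $\theta$-triangles $\tau(1-s)$ and $\tau(0)$ differ once $s\neq1$, i.e.\ $\rho\ne\frac12$.

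The key steps are as follows.

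First, search for a rational point on $\mathcal E$. Small values of $t$ should suffice; for instance $t=-1$ gives $2(32-32+26+12-6)=64$, a square. Using this point, transform $\mathcal E$ to Weierstrass form $E$, as was done in Propositions~\ref{prop:E} and~\ref{prop:E47}.

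Second, show that $E(\Q)$ has positive rank by exhibiting a point of infinite order, for example via a canonical height computation or the Nagell--Lutz criterion.

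Third, find a rational point $P_0$ of $\mathcal E$ whose $t$-coordinate lies in $(-1,0)$ with $0<\rho_A(t)<1$. By Corollary~\ref{cor:apex-density}, $\rho_A$ is a decreasing homeomorphism, so $\rho_A>0$ holds on some interval $(-1,t_0)$.

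The density argument then proceeds as in Theorem~\ref{thm:47-infinite}. By the Poincar\'e--Hurwitz theorem, the rational points are dense in the identity component of $E(\R)$, and hence in every real component containing a rational point. The map $t$ is a real-analytic map from the relevant component of $\mathcal E(\R)$ onto the corresponding interval. If the image of this component covers all of the interval $I=\{t\in(-1,0):\rho_A(t)\in(0,1)\}$, then rational $t$ from $\mathcal E(\Q)$ are dense in $I$. Composing with the homeomorphism $\rho_A$, the corresponding $\rho$ are dense in $(0,1)$. Finitely many exceptions, such as $\rho=\frac12$, can be discarded.

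The main obstacle is this last topological step. We must check that the quartic $32t^4+32t^3+26t^2-12t-6$ is positive on all of $I$, so that $I$ lies entirely in the image of real points, and that the component of $\mathcal E(\R)$ over $I$ contains a rational point. Since the quartic is negative near $t=0$ (its value there is $-6$), positivity on all of $(-1,0)$ fails. It must therefore be compared with the threshold $t_0$ where $\rho_A=0$, i.e.\ where $1+7\rho$ changes sign relative to $\rho=0$. Indeed, $1+7\rho>0$ whenever $\rho>0$, so positivity is automatic on $I$. The remaining check is that $\mathcal E(\R)$ over $I$ is a single component that meets $E(\Q)$. This holds if it contains the rational point above $t=-1$ (an endpoint) or a point of infinite order; if needed, it can be arranged by translating a point of infinite order by a torsion point.
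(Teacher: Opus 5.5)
Your proposal is correct and is essentially the paper's proof. Your curve $z^2=2(32t^4+32t^3+26t^2-12t-6)$ is the paper's $v^2=16t^4+16t^3+13t^2-6t-3$ with $z=2v$, and the remaining ingredients are the same: a Weierstrass model of rank $1$, Poincar\'e--Hurwitz density, the homeomorphism $\rho_A\colon(-1,t_0)\to(0,1)$, positivity over $I$ from $1+7\rho>0$, and distinctness via apex versus base angle.

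The only variation is the topological step, and you still need to certify positive rank. The paper uses $\Delta_E<0$ to conclude that $E(\R)$ is connected. Your observation also works: the arc over $I$ has the rational point above $t=-1$ in its closure, so it lies in a component where rational points are dense once the rank is positive. The paper certifies the rank by citing LMFDB \texttt{294.c2}.
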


\begin{proof}
Recall from Theorem~\ref{thm:family-A} that the apex family is
parametrized by $\rho_A(t)$ satisfying \eqref{eq:apex-param}. Such a value also belongs to the base family precisely
when
$(1-\rho_A(t))(1+7\rho_A(t))$
is a rational square. A direct substitution gives
\[
(1-\rho_A(t))(1+7\rho_A(t))
(2t^2+t+1)^4
=
4(t+1)^2
\bigl(16t^4+16t^3+13t^2-6t-3\bigr).
\]
Thus the simultaneous apex--base condition is governed by the genus-one curve
\[
\mathcal E:\qquad
v^2=16t^4+16t^3+13t^2-6t-3.
\]
This quartic is birational over $\Q$ to the elliptic curve
\[
E:\qquad Y^2+XY+Y=X^3+2X+32.
\]
An explicit birational map $\mathcal E\dashrightarrow E$ is given by
\[
X=8t^2+4t-2v+1,
\qquad
Y=-32t^3-28t^2+8tv-15t+3v+2,
\]
with inverse
\[
t=\frac{7-3X-2Y}{8X+10},
\qquad
v=4t^2+2t+\frac{1-X}{2}.
\]
A direct substitution verifies that these maps carry the equation of
$\mathcal E$ to that of $E$ on their respective domains of definition.

The displayed equation is the global minimal model of the elliptic
curve with LMFDB label \texttt{294.c2}~\cite{lmfdb}. It has
Mordell--Weil rank $1$ and discriminant
$\Delta_E=-444528<0$.
Thus $E(\R)$ is connected, and since $E(\Q)$ has positive rank, its
rational points are dense in $E(\R)$ by the Poincar\'e--Hurwitz
theorem~\cite[Satz~11, p.~78]{Skolem}. Equivalently,
$\mathcal E(\Q)$ is dense in $\mathcal E(\R)$.

We now determine the range of the corresponding values of $\rho$.
Differentiating gives
$\rho_A'(t) = \frac{8t(t+1)(t+2)} {(2t^2+t+1)^3}$.
Thus
$\rho_A'(t)<0 \qquad (-1<t<0)$.
Since
$\rho_A(-1)=1, \qquad \rho_A(0)=-1$,
there is a unique $t_0\in(-1,0)$ for which $\rho_A(t_0)=0$, and
$\rho_A:(-1,t_0)\longrightarrow(0,1)$
is a homeomorphism.

For every $t\in(-1,t_0)$ the identity above gives
$16t^4+16t^3+13t^2-6t-3>0$,
because $0<\rho_A(t)<1$. Hence the real curve $\mathcal E(\R)$ has
points above every $t\in(-1,t_0)$.

Let $U\subset(0,1)$ be a nonempty open interval. Then
$I_U:=\rho_A^{-1}(U)$
is a nonempty open subinterval of $(-1,t_0)$. Removing
$t=-\frac12$ if necessary still leaves a nonempty open set. The
corresponding subset
$\{(t,v)\in\mathcal E(\R):t\in I_U,\ t\ne-\tfrac12\}$
is therefore nonempty and open. It contains infinitely
many rational points, since the rational points are dense.

For any such rational point $(t,v)$, the value
$\rho=\rho_A(t)$ is rational and, by
Theorem~\ref{thm:family-A}, gives a genuine apex pair. Furthermore,
\[
(1-\rho)(1+7\rho)
=
\left(
\frac{2(t+1)v}{(2t^2+t+1)^2}
\right)^2,
\]
so $\rho$ also satisfies the base-family condition. Since
$0<\rho<1$ and $\rho\ne\frac12$, Theorem~\ref{thm:family-B} gives a
genuine base pair.

These two pairs are distinct: in the first the angle $\theta$ is the
apex angle of the isosceles $\theta$-triangle, whereas in the second
it is a base angle. A non-degenerate isosceles triangle can have
$\theta$ both as an apex angle and as a base angle only in the
equilateral case $\rho=\frac12$, which has been excluded.

Finally, each value of $t$ gives at most two points $(t,\pm v)$ on
$\mathcal E$, while $\rho_A$ is strictly monotone on $(-1,t_0)$.
Thus the infinitely many rational points above $I_U$ yield infinitely
many distinct values of $\rho$ in $U$. This proves the result.
\end{proof}

\section{Multiplicity when one triangle is fixed} \label{sec:fixed-triangle}

In this section we study multiplicity from the point of view of one
fixed triangle, rather than only the angle. That is, we ask how many triangles of the other type can have
the same perimeter and area.

\subsection{Fixed $\theta$-triangle}

Fix $\rho$ and $q$ such that $\tau(q)$ is non-degenerate. Recall that the scaling factor is fixed by
$k=\frac{2(1-\rho)}{1-q}$ from the equality of perimeters.

Suppose that $k\tau(q)$ and $\iota(u_0)$ form a genuine pair. By
\eqref{eq:param-tq}, there is a uniquely determined $t\in\Q$ such that $u_0=2t(1-\rho).$

We first treat the case in which the fixed $\theta$-triangle is
scalene.

\begin{prop}\label{prop:several-isosceles-partners}
Suppose that $k\tau(q)$ is scalene and that
$\bigl(k\tau(q),\iota(u_0)\bigr)$ is a genuine pair. Put
\[
\Delta_{t,q}
=
-(q-1)(q+2t+1)
\bigl(
3q^2t^2+q^2-2qt^3+2q+2t^3-3t^2+1
\bigr).
\]
Then the fixed triangle $k\tau(q)$ has a second genuine rational
isosceles partner if and only if
$\Delta_{t,q}\in\Q^{\times2}$.
When this condition holds, the second partner is
$\iota(u_1)$, where
\[
u_1=
\frac{
-u_0-\sqrt{\,4(1-\rho^2)-3u_0^2\,}
}{2}.
\]
In particular, a fixed scalene $\theta$-triangle has at most two
genuine rational isosceles partners.
\end{prop}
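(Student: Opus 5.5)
With $\rho$ and $q$ fixed, the isosceles partners of $k\tau(q)$ are the admissible roots of the cubic $\Phi_q$ in \eqref{eq:Phiq}, viewed as a polynomial in $u$:
\[
u^3-(1-\rho^2)u-c=0,\qquad c=\frac{2(1-\rho)^2(\rho-q)(1+q)}{1-q}.
\]
This cubic has no $u^2$ term and its $u$-coefficient is $-(1-\rho^2)$. So once one root $u_0$ is known, the other two roots $u$ satisfy $u+u_0+u' = 0$ and $u^2+uu_0+u_0^2=1-\rho^2$. The quotient quadratic is therefore
\[
u^2+u_0u+u_0^2-(1-\rho^2)=0,
\]
with roots
\[
u=\frac{-u_0\pm\sqrt{4(1-\rho^2)-3u_0^2}}{2}.
\]
Hence there are at most three candidate values of $u$ in total, and at most two besides $u_0$. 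A second rational partner exists exactly when $4(1-\rho^2)-3u_0^2$ is a rational square and one of the two resulting roots is admissible and genuine.

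\textbf{Step 1: the admissibility count.} The product of the three roots is $c$, and $c>0$ because $\rho>q$, $|q|<1$ and $\rho<1$. The sum of the roots is $0$. A non-degenerate partner requires $u\in(-\sqrt{1-\rho^2},0)$. Since $u_0<0$ and the product is positive, the other two roots have product $c/u_0<0$, so exactly one of them is negative. That root is $u_1=\bigl(-u_0-\sqrt{\cdot}\,\bigr)/2$, taking the non-negative square root.

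It remains to check that $u_1>-\sqrt{1-\rho^2}$. Evaluate the cubic at $-\sqrt{1-\rho^2}$: its value is $-c<0$. The cubic is also negative on $(-\infty,u_1)$, since $u_1$ is the smallest root. As a sanity check, at $u=0$ the value is $-c<0$, so the cubic must be positive somewhere between the negative roots. Here $u_1$ and $u_0$ are the two negative roots. Between them the cubic is positive, and to the left of the smaller one it is negative. Since the value at $-\sqrt{1-\rho^2}$ is negative, that point lies to the left of both negative roots. Therefore both negative roots lie in $(-\sqrt{1-\rho^2},0)$, and so $\iota(u_1)$ is non-degenerate automatically.

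This gives the "at most two" statement. I still need to handle the case $u_1=u_0$, which is a double root (zero discriminant) and yields no new partner. Non-congruence of $\iota(u_1)$ with the scalene $k\tau(q)$ is automatic.

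\textbf{Step 2: identifying the square class with $\Delta_{t,q}$.} Substitute $u_0=2t(1-\rho)$ and $\rho=\rho(t,q)$ from \eqref{eq:param-tq} into $4(1-\rho^2)-3u_0^2$. Write $D=(q+1)+(1-q)(4t^3+t)$. Then
\[
1-\rho=\frac{(1-q)(2t+1)\,\cdot\,(\ldots)}{D},
\]
and I expect
\[
4(1-\rho^2)-3u_0^2 = (\text{square})\cdot\Delta_{t,q}.
\]
The linear factors $(q-1)$ and $(q+2t+1)$ should come from $1-\rho$ and $1+\rho$ respectively. The main obstacle is this factorization: I would carry out the elimination symbolically, pulling out the square factor $D^{-2}$ and any squared polynomial in $t,q$. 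I would also check that the condition $\Delta_{t,q}\ne0$ excludes exactly the double-root case.

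Finally, any extra zero factors, such as $t=0$ or coincidence configurations, need to be excluded using the genuineness hypothesis on $u_0$.
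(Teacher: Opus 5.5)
Your strategy is the paper's: factor $\Phi_q=(u-u_0)\bigl(u^2+u_0u+u_0^2-(1-\rho^2)\bigr)$, reduce to $\delta=4(1-\rho^2)-3u_0^2$ being a square, and show that only the minus root can be admissible. Your Vieta/sign-pattern argument is an acceptable substitute for the paper's explicit inequalities, but it has a slip to fix. First, $u_1$ need not be the smallest root: for $\rho=\frac{171}{221}$ and $u_0=-\frac{100}{221}$ one gets $u_1=-\frac{60}{221}>u_0$. Second, a negative value at $-\sqrt{1-\rho^2}$ does not by itself place that point to the left of both negative roots, because the cubic is also negative between the larger negative root and $0$. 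You can close this in either of two ways. Use the hypothesis $u_0>-\sqrt{1-\rho^2}$. Or note that every root satisfies $u\bigl(u^2-(1-\rho^2)\bigr)=c>0$, so every negative root has $u^2<1-\rho^2$.

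The real gap is Step~2. It is the only place where the specific polynomial $\Delta_{t,q}$ enters, and you leave it as an expectation with an incorrect guessed structure. In fact $1-\rho=(1-q)(q+2t+1)/D$, so both linear factors come from $1-\rho$. Also $1+\rho=\bigl((q+1)^2+8t^3(1-q)\bigr)/D$. Write $\delta=(1-\rho)\bigl(4(1+\rho)-12t^2(1-\rho)\bigr)$. The second factor equals $4G/D$ with $G=3q^2t^2+q^2-2qt^3+2q+2t^3-3t^2+1$. Hence $\delta=4\Delta_{t,q}/D^2$, which is the paper's identity.

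Your plan for the case $u_1=u_0$ would also fail. The condition $\Delta_{t,q}\neq0$ is automatic, because $\delta-u_0^2=4(1-\rho^2-u_0^2)>0$. By contrast, $u_1=u_0$ occurs exactly when $3u_0^2=1-\rho^2$, and there $\delta=9u_0^2$ is a \emph{nonzero} square. The correct exclusion is geometric. The condition $3u_0^2=1-\rho^2$ says that $\iota(u_0)$ is equilateral. A triangle with the same perimeter and area as an equilateral triangle is itself equilateral (Heron plus AM--GM), which contradicts that $k\tau(q)$ is scalene. The paper's proof passes over this point silently, so you were right to raise it.
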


\begin{proof}
Since $u_0$ is a root of the cubic $\Phi_q$ in
\eqref{eq:Phiq}, we have
\[
\Phi_q(u)
=
(u-u_0)
\left(
u^2+u_0u+u_0^2-(1-\rho^2)
\right).
\]
Thus the two remaining roots are rational if and only if
$\delta := 4(1-\rho^2)-3u_0^2 \in \Q^{\times 2}$.
Substituting
$\rho=\rho(t,q)$ and $
u_0=2t(1-\rho),
$ gives
\[
\delta
=
\frac{
4\Delta_{t,q}
}{
\bigl(4qt^3+qt-q-4t^3-t-1\bigr)^2
}.
\]
Hence $\delta$ is a nonzero rational square if and only if
$\Delta_{t,q}$ is a nonzero rational square.

Assume this condition holds. The two remaining roots are
$u_{\pm} = \frac{-u_0\pm\sqrt{\delta}}{2}$.
Because the original pair is genuine,
$-\sqrt{1-\rho^2}<u_0<0$.
It follows immediately that $u_+>0$, so $u_+$ does not lead to a non-degenerate isosceles triangle.

For the other root, since
$\delta-u_0^2 = 4(1-\rho^2-u_0^2)>0
$,  we have $\sqrt{\delta}>-u_0$, and hence $u_-<0$. Moreover, one can directly check that
$(-u_0+2\sqrt{1-\rho^2})^2>4(1-\rho^2)-3u_0^2$.
Therefore
$u_- >-\sqrt{1-\rho^2}$ and
$\iota(u_-)$ is non-degenerate.

Finally, $k\tau(q)$ is scalene whereas $\iota(u_-)$ is isosceles, so
the two triangles cannot be congruent. Thus $u_-=u_1$ gives a second
genuine pair. Conversely, any second rational isosceles partner must
come from one of the two remaining roots of $\Phi_q$, so the square
condition is necessary.
\end{proof}
\begin{rem}
The assumption that the fixed $\theta$-triangle is scalene is
essential. If $k\tau(q)$ is isosceles and $u_0$ gives a genuine pair, then the other
admissible negative root of $\Phi_q$ corresponds to the congruent pair.
Thus an
isosceles $\theta$-triangle does not acquire a second genuine partner
from Proposition~\ref{prop:several-isosceles-partners}.
\end{rem}

\subsection{Fixed isosceles triangle}

We now consider the reverse question. Fix $\rho$ and a non-degenerate isosceles triangle $\iota(u)$, and ask how many $\theta$-triangles can have the same perimeter and area. In contrast with the situation of Proposition~\ref{prop:several-isosceles-partners}, there is no multiplicity up to congruence.

\begin{prop}\label{prop:fixed-isosceles}
Fix $-1<\rho<1$ and $ -\sqrt{1-\rho^2}<u<0$. Suppose that $(k\tau(q), \iota(u))$ is a genuine pair.  Then any other solution $k'\tau(q')$ for the same fixed isosceles triangle is congruent to $k\tau(q)$. More precisely, the only possible second set of parameters is
\[
q'=\frac{2\rho-1-q}{1-q},
\qquad
k'=1-q,
\]
and
\[
k'\tau(q')
=
\bigl(
k(1-q^2),\,
2k(\rho-q),\,
k(q^2-2\rho q+1)
\bigr).
\]
Thus $k'\tau(q')$ is obtained from $k\tau(q)$ simply by interchanging the first two sides. In particular, a fixed isosceles triangle has at most one genuine $\theta$-triangle partner up to congruence.
\end{prop}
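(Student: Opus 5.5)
Fix $\rho$ and $u$. The plan is to regard the system \eqref{eq:system} as equations in the unknowns $(k,q)$ and show that it has at most two solutions, which are related by the stated involution. From the perimeter equation, $1-q=2(1-\rho)/k$, so $q$ is determined by $k$. Substituting into the area equation gives exactly the quadratic in $k$ displayed in Section~\ref{sec:two-geometries}:
\[
-2(1-\rho)^2k^2+B(u,\rho)\,k-4(1-\rho)^3=0 .
\]
Since $\rho\neq1$, this quadratic has leading coefficient $-2(1-\rho)^2\neq0$. Hence, for the fixed isosceles triangle $\iota(u)$, there are at most two admissible values of $k$, and each determines $q$.

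The key observation is Vieta's formula: the product of the two roots is
\[
\frac{-4(1-\rho)^3}{-2(1-\rho)^2}=2(1-\rho).
\]
So if $k$ is one root, the other is $k'=2(1-\rho)/k=1-q$, using the perimeter equation. Then
\[
1-q'=\frac{2(1-\rho)}{k'}=k=\frac{2(1-\rho)}{1-q},
\]
which rearranges to $q'=\bigl(q-1+2(1-\rho)\bigr)/(1-q)=(2\rho-1-q)/(1-q)$, as stated.

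It remains to compute $k'\tau(q')$ directly.
\begin{itemize}
\item First side: $2k'(\rho-q')=2(1-q)\rho-2(2\rho-1-q)=2(1-\rho)(1+q)=k(1-q)(1+q)=k(1-q^2)$.
\item Second side: $k'(1-q'^2)=k'(1-q')(1+q')$. Here $1-q'=k$ and $1+q'=2(\rho-q)/(1-q)$, so the product is $k\cdot 2(\rho-q)$.
\item Third side: the third side is forced by the law of cosines with the same angle $\theta$ between the first two sides. Alternatively, one checks it from the fact that the perimeters agree.
\end{itemize}
Thus $k'\tau(q')$ is $k\tau(q)$ with the first two sides interchanged, so the two triangles are congruent.

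If the quadratic has a double root, or if $k'=k$, there is nothing more to prove. Otherwise every $\theta$-triangle partner of $\iota(u)$ is one of these two, which gives uniqueness up to congruence. The main obstacle is only bookkeeping: confirming that the displayed quadratic really is the elimination of $q$, with no lost solutions (for example, $k\neq0$ and $q\neq1$ are automatic from $\rho\neq1$). We also need the third-side identity $k'(q'^2-2\rho q'+1)=k(q^2-2\rho q+1)$, which follows by a short computation using $k(1-q)=2(1-\rho)$.
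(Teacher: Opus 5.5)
Your proof is correct and is essentially the paper's argument: reduce the system for the fixed $\iota(u)$ to a quadratic, identify the second root, and substitute. The only difference is that you eliminate $q$ rather than $k$, so Vieta's product $kk'=2(1-\rho)$ gives $k'=1-q$ directly, whereas the paper uses the quadratic in $q$ and must invoke the equation satisfied by $q$ to remove the $u$-dependent coefficient. One small slip: the intermediate numerator for $q'$ should be $(1-q)-2(1-\rho)$, not $q-1+2(1-\rho)$, which has the opposite sign; your final formula $q'=(2\rho-1-q)/(1-q)$ is nevertheless correct.
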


\begin{proof}
From the perimeter equation in \eqref{eq:system}, we have
$k=\frac{2(1-\rho)}{1-q}.$
Substituting this into the area equation in \eqref{eq:system} gives the following quadratic equation in $q$:
\[
2(1-\rho)^2(\rho-q)(1+q)
+u(1-\rho^2-u^2)(1-q)=0.
\] Hence there are at most two possible values of $q$.

Suppose that $q$ is one root. Using that the equation is satisfied by $q$, the other root can be expressed as
$q'=\frac{2\rho-1-q}{1-q}$.
The corresponding scaling factor is
given by $k'=\frac{2(1-\rho)}{1-q'}=1-q$.
A direct substitution gives
\[
k'\tau(q')
=
\bigl(
k(1-q^2),\,
2k(\rho-q),\,
k(q^2-2\rho q+1)
\bigr),
\]
whereas
\[
k\tau(q)
=
\bigl(
2k(\rho-q),\,
k(1-q^2),\,
k(q^2-2\rho q+1)
\bigr).
\]
Thus the two scaled $\theta$-triangles differ only by interchanging the two sides adjacent to the angle $\theta$, and are therefore congruent.
\end{proof}

\begin{rem}
The transformation
$q\longmapsto\frac{2\rho-1-q}{1-q}$
is an involution. Its admissible fixed point satisfies
$q=1-\sqrt{2(1-\rho)}$,
which is precisely the apex-isosceles case of Lemma~\ref{lem:iso-theta}.
\end{rem}

\section{The square-condition surface}
\label{sec:square-condition-surface}

Proposition~\ref{prop:several-isosceles-partners} shows that the problem of finding a second isosceles partner for a fixed scalene $\theta$-triangle is governed by a square condition. Consider the surface
\[
S:\qquad w^2=\Delta_{t,q}=-(q-1)(q+2t+1)
\bigl(
3q^2t^2+q^2-2qt^3+2q+2t^3-3t^2+1
\bigr).
\]
A rational point $(t,q,w)$ on $S$ gives a parameter pair $(t,q)$ for which all three roots of the cubic $\Phi_q$ are rational. Under the hypotheses of Proposition~\ref{prop:several-isosceles-partners}, a rational point with $w\ne0$ is precisely the condition that the fixed scalene $\theta$-triangle admit a second genuine rational isosceles partner.
In this section we study the surface $S$.
\begin{prop}\label{prop:square-surface-k3}
The surface $S$ is birational over $\Q$ to a relatively minimal elliptic
surface
\[
\pi:E\longrightarrow\mathbb P^1_t
\]
whose generic fibre is given by
\[
y^2=(x-c(t))(x^2-4d(t)),
\]
where $c(t)=2t(t^3-2t^2-3t-2)$ and
$d(t)=t^3(t-4)(t-1)^2(t+1)^2$. The surface $E$ is a $K3$ surface.
\end{prop}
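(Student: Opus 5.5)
The plan is to regard $S$ as a genus-one fibration over the $t$-line, put the generic fibre into Weierstrass form using a rational point, and then read off the $K3$ property from the degrees of the Weierstrass coefficients.

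\textbf{Step 1: the generic fibre.} For fixed $t$, the right-hand side $\Delta_{t,q}$ is a quartic in $q$:
\[
\Delta_{t,q}=-(q-1)(q+2t+1)\bigl((3t^2+1)q^2+(2-2t^3)q+(2t^3-3t^2+1)\bigr).
\]
So the generic fibre of $(t,q,w)\mapsto t$ is a genus-one curve over $\Q(t)$. It has the rational points $q=1$ and $q=-2t-1$ with $w=0$.

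\textbf{Step 2: a cubic model.} I send the root $q=1$ to infinity by setting $q=1+1/z$ and $w=W/z^2$. Because $(q-1)$ contributes exactly one factor $1/z$, clearing denominators gives
\[
W^2=-z^3\cdot(\text{remaining cubic in }1/z),
\]
that is, $W^2$ equal to a cubic in $z$ with coefficients in $\Q(t)$. The other rational root, $q=-2t-1$, becomes the rational root $z_0=-1/(2t+2)$ of this cubic. After translating $z_0$ to the position $x=c$ and scaling $x$ and $y$ by suitable powers of polynomial factors in $t$, I expect to reach the form
\[
y^2=(x-c(t))(x^2-4d(t)).
\]
Here $(c,0)$ is the $2$-torsion point coming from $q=-2t-1$, and $x^2-4d$ is the transform of the irreducible quadratic factor. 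To match the stated $c$ and $d$ exactly, I will have to choose the scalings so that the $x^2$-coefficient is killed in the quadratic factor, and absorb square factors in $t$, such as $(t+1)^2$ and $(t-1)^2$, into $y$. Carrying out this explicit normalization and then verifying it by substitution, for instance in \textsc{Magma} as elsewhere in the paper, is the main computational obstacle. If the naive substitution does not land on this form, I will first compute the invariants $c_4$ and $c_6$ of the quartic and compare them with those of the target curve, and then solve for the isomorphism.

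\textbf{Step 3: minimality and the $K3$ property.} Expanding the target curve gives
\[
y^2=x^3-c\,x^2-4d\,x+4cd,
\]
so $a_2=-c$, $a_4=-4d$, $a_6=4cd$, with $\deg a_2=4$, $\deg a_4=8$, $\deg a_6=12$. These satisfy $\deg a_i\le 2i$, so the surface fits in the weight-$2$ Weierstrass model over $\mathbb P^1$.
\begin{itemize}
\item Minimality at finite places: there is no finite place $t=\alpha$ with $v(a_i)\ge i$ for $i=2,4,6$. Indeed, $c$ has only simple roots, and at $t=0$ we have $v(a_2)=1<2$; at the other roots of $d$, namely $t=4,\pm1$, $c$ does not vanish at all (at $t=4$, $c=16\neq0$; $t=\pm1$ do not annihilate $c$).
\item Minimality at infinity: the degrees are exactly $4,8,12$, so after the twist by $t^{-2i}$ the valuations there are $0$.
\item Non-degeneracy: the discriminant $16(x$-resultant$)\propto d\,(c^2-4d)^2$ is not identically zero, and the $j$-invariant is nonconstant in $t$.
\end{itemize}
Hence the model is a relatively minimal, non-isotrivial elliptic surface whose Euler characteristic is $\chi=2$. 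This holds because some $a_i$ has degree greater than $i$: for example, $\deg a_4=8>4$ rules out the rational case $\chi=1$. A relatively minimal elliptic surface over $\mathbb P^1$ with $\chi(\mathcal O)=2$ and a section is a $K3$ surface. Equivalently, the degree of the discriminant is $24$, which equals the Euler number.
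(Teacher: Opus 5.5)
Your birational step is the same as the paper's, and it works exactly as you hope. With $z=1/(q-1)$ you get $W^2=-\bigl(1+2(t+1)z\bigr)\bigl(4z^2+(4+6t^2-2t^3)z+3t^2+1\bigr)$, whose leading coefficient is $-8(t+1)$. The paper substitutes $x=\beta+\alpha z$, $y=\alpha W$, with $\alpha=-8(t+1)$ and $\beta=2(t+1)(t^3-3t^2-2)$; this $\beta$ is the translation that kills the linear term of the quadratic factor. The result is exactly $y^2=(x-c)(x^2-4d)$, with $c=\beta+4$ the image of $q=-2t-1$. No square factors such as $(t\pm1)^2$ need to be moved into $y$.

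The two routes differ at the $K3$ step. The paper runs Tate's algorithm to find the fibres $I_1^*,I_8,I_4,I_2,I_2,I_1$, sums their Euler numbers to $24=12\chi(\mathcal O_E)$, and then uses the canonical bundle formula and Serre duality. You instead read $\chi=2$ off the weight-$2$ Weierstrass data and quote the standard fact. Your route avoids identifying fibre types, while the paper's yields the fibre configuration it later uses for the Picard number.

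Your minimality argument, however, is not valid as written. Checking $v(a_i)\ge i$ in the given coordinates is not a minimality test when $a_2\neq0$, because a translation $x\mapsto x+r$ can raise valuations. For the same reason, having all valuations equal to $0$ at infinity proves nothing there. Nor does $\deg a_4=8>4$ by itself exclude a smaller $\chi$. For example, $y^2=(x-t^4)^3+(x-t^4)+1$ has $\deg a_i=2i$ and passes both of your tests. Yet it is the constant curve $Y^2=X^3+X+1$: it is non-minimal at $t=\infty$ and has $\chi=0$.

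The correct tool is the discriminant you already wrote down. You have $\Delta=256\,d\,(c^2-4d)^2$ and $c^2-4d=16t^2(2t+1)^2$, so $\Delta=2^{16}t^7(t-4)(t-1)^2(t+1)^2(2t+1)^4$. Hence $v(\Delta)\le7$ at every finite place and $v_\infty(\Delta)=24-16=8$. Since $v(\Delta)<12$ everywhere, the weight-$2$ model is globally minimal, and its discriminant divisor has degree $24=12\chi$, so $\chi=2$ and your conclusion follows.

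One small slip: $c(4)=144$, not $16$, though this does not affect the argument.
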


\begin{proof}
Put
\[
\alpha=-8(t+1),\qquad
\beta=2(t+1)(t^3-3t^2-2).
\]
The birational change of variables
\[
x=\beta+\frac{\alpha}{q-1},
\qquad
y=\frac{\alpha w}{(q-1)^2},
\]
with inverse
\[
q=1+\frac{\alpha}{x-\beta},
\qquad
w=\frac{\alpha y}{(x-\beta)^2},
\]
transforms $w^2=\Delta_{t,q}$ into
\[
y^2=(x-c(t))(x^2-4d(t)).
\]

The discriminant of this Weierstrass model is
\[
2^{16}t^7(t-4)(t-1)^2(t+1)^2(2t+1)^4.
\]
A calculation with Tate's algorithm
(see Appendix~\ref{app:square-condition}) shows that the singular fibres
are precisely
\[
I_1^*\ \text{at }t=0,\qquad
I_8\ \text{at }t=\infty,\qquad
I_4\ \text{at }t=-\frac12,\qquad
I_2\ \text{at }t=\pm1,\qquad
I_1\ \text{at }t=4.
\]
Their Euler numbers add up to $7+8+4+2+2+1=24$. Hence
$e(E)=24=12\chi(\mathcal O_E)$, so $\chi(\mathcal O_E)=2$. (See, for
example, \cite[Sections~6.6--6.7]{SchuttShioda}.)

Since the fibration has a section, it has no multiple fibres, and the
canonical bundle formula gives
\[
\omega_E\simeq
\pi^*\bigl(\omega_{\mathbb P^1}\otimes\mathcal L\bigr),
\qquad
\deg\mathcal L=\chi(\mathcal O_E)=2.
\]
Thus $\mathcal L\simeq\mathcal O_{\mathbb P^1}(2)$ and
$\omega_E\simeq\mathcal O_E$. By Serre duality,
$h^2(E,\mathcal O_E)=1$, and hence
$2=\chi(\mathcal O_E)=1-h^1(E,\mathcal O_E)+1$. Therefore
$h^1(E,\mathcal O_E)=0$, and $E$ is a $K3$ surface.
\end{proof}

The elliptic fibration has the following useful rational sections.

\begin{lem}\label{lem:k3-sections}
The points
\[
P=
\left(
2t^2(t+1)(t+5),\,
8t^2(t+1)(2t+1)^2
\right) \qquad \mbox{ and }\qquad
P_4=
\left(
2t^2(t-1)^2,\,
8t^2(t-1)(2t+1)
\right)
\]
define rational sections of $\pi:E\to\mathbb P^1_t$. Moreover,
$2P_4=(c(t),0)$, so $P_4$ has order $4$ on the generic fibre.
\end{lem}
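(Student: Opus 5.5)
The plan is to verify everything by direct substitution into the Weierstrass model
\[
y^2=f(x):=(x-c(t))(x^2-4d(t)),\qquad c(t)=2t(t^3-2t^2-3t-2),\quad d(t)=t^3(t-4)(t-1)^2(t+1)^2 .
\]

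\textbf{Step 1: $P$ is a section.} Set $x_P=2t^2(t+1)(t+5)$. First compute
\[
x_P-c(t)=2t\bigl(t(t+1)(t+5)-(t^3-2t^2-3t-2)\bigr)=2t(8t^2+8t+2)=4t(2t+1)^2 .
\]
Next, $x_P^2-4d(t)=4t^3(t+1)^2\bigl(t(t+5)^2-(t-4)(t-1)^2\bigr)$. The inner bracket equals $(t^3+10t^2+25t)-(t^3-6t^2+9t-4)=16t^2+16t+4=4(2t+1)^2$. Hence
\[
f(x_P)=4t(2t+1)^2\cdot 16t^3(t+1)^2(2t+1)^2=64t^4(t+1)^2(2t+1)^4 ,
\]
which is exactly $\bigl(8t^2(t+1)(2t+1)^2\bigr)^2$. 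So $P$ lies on the generic fibre.

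\textbf{Step 2: $P_4$ is a section.} Set $x_4=2t^2(t-1)^2$. Then
\[
x_4-c(t)=2t\bigl(t(t-1)^2-t^3+2t^2+3t+2\bigr)=2t(4t+2)=4t(2t+1).
\]
Also $x_4^2-4d(t)=4t^3(t-1)^2\bigl(t(t-1)^2-(t-4)(t+1)^2\bigr)$. The inner bracket is $(t^3-2t^2+t)-(t^3-2t^2-7t-4)=8t+4$. Therefore
\[
f(x_4)=4t(2t+1)\cdot 16t^3(t-1)^2(2t+1)=\bigl(8t^2(t-1)(2t+1)\bigr)^2 ,
\]
so $P_4$ is a section.

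\textbf{Step 3: $2P_4=(c(t),0)$.} I will not grind through the doubling formula. Instead I use the standard 2-descent criterion for a curve $y^2=(x-e_1)(x-e_2)(x-e_3)$: a point $R$ equals $2S$ for some $S$ exactly when each $x(R)-e_i$ is a square.

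For the $2$-torsion point $T=(c,0)$, the differences from the roots are $c-c=0$ and $c\mp 2\sqrt d$. The cleaner route is the explicit formula for halving. A point $S$ with $2S=T$ is characterised by the tangent line at $S$ passing through $T$. Equivalently, $x(S)$ satisfies $(x(S)-c)^2=(c^2-4d)$ up to the usual normalisation. In fact, for $y^2=(x-c)(x^2-4d)$ the points $S$ with $2S=(c,0)$ satisfy $x(S)-c=\pm\sqrt{c^2-4d}$.

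So I check that $(x_4-c)^2=c^2-4d$, that is, $16t^2(2t+1)^2=c^2-4d$. This is a polynomial identity in $t$ that can be expanded directly. As a cross-check, I confirm that the slope of the tangent line at $P_4$,
\[
\lambda=\frac{f'(x_4)}{2y_4},
\]
satisfies $y_4=\lambda(x_4-c)$. This holds if and only if the tangent at $P_4$ passes through $(c,0)$, and that gives $2P_4=-(c,0)=(c,0)$.

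\textbf{Step 4: order exactly $4$.} Since $(c(t),0)$ is a point of order $2$ (and $c^2\neq 4d$ generically, so it is not the identity), $P_4$ has order exactly $4$ on the generic fibre.

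\textbf{Main obstacle.} The only delicate part is Step 3, namely the identity $c^2-4d=16t^2(2t+1)^2$, equivalently the tangent-line check. It is a degree-8 expansion. If the identity fails because of a normalisation, I would fall back on the duplication formula
\[
x(2P_4)=\lambda^2-(a_2)-2x_4,\qquad a_2=-c,
\]
and verify that it equals $c$.
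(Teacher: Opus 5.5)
Your proposal is correct and is essentially the paper's argument: direct substitution shows that $P$ and $P_4$ lie on the generic fibre, and your tangent-line check is exactly the duplication formula. At $P_4$ the slope is $\lambda=2t(t-1)$, with $\lambda(x_4-c)=8t^2(t-1)(2t+1)=y_4$, so $x(2P_4)=\lambda^2+c-2x_4=c$. The identity $c^2-4d=16t^2(2t+1)^2$ does hold, but the halving criterion as you phrased it gives only the necessary direction; it is the tangent-line computation (or a count of the four halves of $(c,0)$) that actually proves $2P_4=(c,0)$.
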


\begin{proof}
One can see by direct computation that $P$ and $P_4$ lie on the generic fibre. The duplication formula gives $2P_4=(c(t),0)$, which is
a nontrivial point of order $2$, and hence $P_4$ has order $4$.
\end{proof}

\begin{lem}\label{lem:k3-positive-rank}
The section $P$ has infinite order on the generic fibre of $E$.
Therefore, for all but finitely many $t_0\in\Q$ for which the fibre
$E_{t_0}$ is smooth, the specialization $P(t_0)$ has infinite order.
In particular, $\operatorname{rank}E_{t_0}(\Q)>0$ for all but finitely
many such $t_0$.
\end{lem}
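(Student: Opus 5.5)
To show that $P$ has infinite order on the generic fibre, I will specialize. Torsion on the generic fibre survives specialization. If $nP=O$ identically, then for every $t_0$ with smooth fibre, $nP(t_0)=O$ as well. So it suffices to find one rational $t_0$ with $E_{t_0}$ smooth and $P(t_0)$ of infinite order.

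\textbf{Choosing the fibre.} The discriminant in Proposition~\ref{prop:square-surface-k3} vanishes only at $t\in\{0,4,\pm1,-\tfrac12\}$ (and at $\infty$). I will therefore pick a small value such as $t_0=2$ or $t_0=-2$.

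\textbf{Checking infinite order on that fibre.} For $t_0=2$ we have $c(2)=2\cdot2(8-8-6-2)=-32$ and $d(2)=8\cdot(-2)\cdot1\cdot9=-144$. The fibre is then
\[
y^2=(x+32)(x^2+576),
\]
and $P(2)=(2\cdot4\cdot3\cdot7,\ 8\cdot4\cdot3\cdot25)=(168,2400)$. I would verify that $(168,2400)$ lies on this curve. Then I would apply Nagell--Lutz on this integral model (or a minimal model) to show $P(2)$ is not torsion. The plan is to compute $2P(2)$ and observe non-integral coordinates. Alternatively, reduce modulo two primes of good reduction: $\#E_{t_0}(\Q)_{\mathrm{tors}}$ divides both reduced group orders, so it is at most $8$ or so. Then check that $mP(2)\neq O$ for all $m$ up to that bound, or simply compute a positive canonical height in \textsc{Magma}, as the paper does elsewhere.

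There is a cleaner alternative over the function field. The generic fibre has full $2$-torsion only if $d$ is a square, and $d$ is not. Its torsion is therefore small: it contains $\Z/4$ from $P_4$, and by Mazur-type bounds the order is at most $12$ or $16$ for curves over $\Q(t)$ specializing well. Then $P$ has infinite order unless $nP=O$ for some $n\le 16$, which the specialization excludes.

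\textbf{The ``all but finitely many'' part.} I will invoke Silverman's specialization theorem: for a non-constant elliptic surface over $\mathbb P^1_\Q$, specialization $E(\Q(t))\to E_{t_0}(\Q)$ is injective for all but finitely many $t_0\in\Q$. Non-constancy holds because the fibre types $I_8$ and $I_1^*$ force a nonconstant $j$-invariant. Hence $P(t_0)$ has infinite order for almost all $t_0$ with smooth fibre, and $\operatorname{rank}E_{t_0}(\Q)>0$ follows.

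The main obstacle is making the single-fibre infinite-order check rigorous rather than numerical. I would rely on the Nagell--Lutz/denominator argument, or on a torsion bound via reduction at two good primes followed by explicit multiples.
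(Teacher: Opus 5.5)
Your argument is essentially the paper's: specialize at $t=2$ to $y^2=(x+32)(x^2+576)$ with $P(2)=(168,2400)$, bound the torsion of $E_2(\Q)$ by reducing at two primes of good reduction, check the small multiples of $P(2)$, and finish with Silverman's specialization theorem. The paper uses $\#E_2(\mathbb F_7)=12$ and $\#E_2(\mathbb F_{13})=16$, so the torsion order divides $4$, and notes that $2P(2)=(32,320)$ is neither $O$ nor of order $2$. One correction to your Nagell--Lutz variant: $2P(2)=(32,320)$ is integral, so you would have to go on to $4P(2)$, whose $x$-coordinate is $-1679/100$, to see a non-integral point.
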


\begin{proof}
Specializing at $t=2$ gives
\[
E_2:\qquad y^2=(x+32)(x^2+576),
\qquad
P(2)=(168,2400).
\]
The change of variables $x=4X-12$, $y=8Y$ gives the model
$Y^2=X^3-X^2+15X+225$, with LMFDB label
\texttt{480.c4}~\cite{lmfdb}.

The curve has good reduction at $7$ and $13$, and a direct count
(see Appendix~\ref{app:square-condition}) gives
$\#E_2(\mathbb F_7)=12$ and $\#E_2(\mathbb F_{13})=16$. Thus any
rational torsion point has order dividing $4$. On the other hand,
$2P(2)=(32,320)$, which is neither the identity nor a point of order
$2$. Hence $P(2)$ has infinite order, and therefore the section $P$
has infinite order on the generic fibre. The result now follows from
Silverman's specialization theorem~\cite{Silverman1983}.
\end{proof}

\begin{rem}
An elliptic $K3$ surface also appears in van Luijk's study
\cite{vL07} of Heron triangles with common perimeter and area. The two
surfaces are different. Indeed, the reducible fibres of the elliptic
fibration above contribute rank
$5+7+3+1+1=17$
to the trivial lattice, which therefore has rank $19$. Since the
section $P$ of Lemma~\ref{lem:k3-sections} has infinite order by
Lemma~\ref{lem:k3-positive-rank}, the Shioda--Tate formula gives
$\rho(E)\geq20$. Hence $\rho(E)=20$, the maximal possible Picard
number for a complex $K3$ surface. By contrast, van Luijk's $K3$
surface has Picard number $18$.
\end{rem}

\begin{thm}\label{thm:infinitely-many-two-partners}
The set of rational values $\rho\in(-1,1)$ for which a scalene
rational $\theta$-triangle has two distinct genuine rational isosceles
partners with the same perimeter and area is dense in $(-1,1)$.
\end{thm}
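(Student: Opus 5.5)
Fix a rational $t_0$ such that the fibre $E_{t_0}$ is smooth and $P(t_0)$ has infinite order. By Lemma~\ref{lem:k3-positive-rank} this excludes only finitely many $t_0$. The plan is to work inside this one elliptic curve, transport real points back to the surface $S$, and then vary $t_0$ to reach any target interval of $\rho$. Through the birational map of Proposition~\ref{prop:square-surface-k3}, a rational point $(x,y)\in E_{t_0}(\Q)$ gives $q=1+\alpha/(x-\beta)$ and $w=\alpha y/(x-\beta)^2$ with $\alpha,\beta$ evaluated at $t_0$. Then $\rho=\rho(t_0,q)$ from \eqref{eq:param-tq} and $u_0=2t_0(1-\rho)$. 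The map $(x,y)\mapsto\rho$ is a nonconstant rational function on $E_{t_0}$, continuous away from finitely many points.

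The genuineness conditions are the following, all of which are strict inequalities or open conditions:
\begin{itemize}
\item $|q|<1$ and $\rho>q$;
\item $-\sqrt{1-\rho^2}<u_0<0$;
\item $k\tau(q)$ is scalene, i.e.\ $q\ne 0$, $q\ne 2\rho-1$ and $q\ne 1-\sqrt{2(1-\rho)}$;
\item $w\ne0$.
\end{itemize}
Given these, Proposition~\ref{prop:several-isosceles-partners} yields the second partner $\iota(u_1)$. I also need $u_1\ne u_0$ for the two partners to be distinct, and this holds off the locus $\delta=9u_0^2$, another proper closed condition.

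Next I handle density in $\rho$. Let $U\subset(-1,1)$ be a nonempty open interval. I want a real point $(t_0,q)$ with $\rho(t_0,q)\in U$ satisfying all the open conditions above, and with $\Delta_{t_0,q}>0$ so that it lifts to $E_{t_0}(\R)$. Then I choose a rational $t_0$ nearby with $P(t_0)$ of infinite order, which is possible since only finitely many are excluded. Since $E_{t_0}(\Q)$ has positive rank, the Poincaré–Hurwitz theorem~\cite[Satz~11, p.~78]{Skolem} makes the rational points dense in the connected component of the identity of $E_{t_0}(\R)$. If $E_{t_0}(\R)$ has two components, I need the target real point to lie in a component meeting $\overline{\langle P(t_0)\rangle}$. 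The cleanest fix is to check that the relevant region lies on the identity component, or that $P(t_0)$ lies on the non-identity component, in which case its multiples are dense in both components. Rational points near the target then give $\rho$ in $U$ with all conditions intact.

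The main obstacle is exhibiting, for every $U$, a real admissible point with $\Delta_{t,q}>0$ on the correct real component. My first attempt is to use the existing genuine pair structure. Every admissible genuine pair with scalene $\tau$ already gives a real point with $u_0$ in range, and since $\delta-u_0^2=4(1-\rho^2-u_0^2)>0$ one gets $\delta>0$ automatically, hence $\Delta_{t,q}>0$. So it suffices to find real parameters $(t,q)$ with $\rho(t,q)\in U$ giving a genuine scalene configuration. For this I fix $\rho\in U$, take $q$ slightly perturbed away from the apex value, and take the negative root of $\Phi_q$ in range. This is the Hirakawa–Matsumura-type real configuration, which exists by continuity: for example at $q=0$ the root $u_-$ of \eqref{eq:base} is admissible, and small perturbations of $q$ keep it so.

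For the component issue, note that $x=\beta+\alpha/(q-1)$, and $q$ ranging over a real interval traces an arc. I would check at a sample $t_0$, say $t_0=2$ where $E_2$ has negative discriminant and hence connected real locus, that the sign of the discriminant $2^{16}t^7(t-4)\cdots$ is negative for $0<t<4$. The real fibre is then connected for all $t\in(0,4)$. It remains to show that the admissible $t=u_0/(2(1-\rho))$ for the chosen configurations can be taken in $(0,4)$, or else handle $t<0$ by the two-component argument above. Since $u_0<0$ forces $t<0$, the case $t<0$ with $t\ne-\tfrac12,-1$ must actually be treated. There I would verify, via the explicit $P(t)$, that $P(t_0)$ lies on the non-identity component, or simply use $P+P_4$ or torsion translates to reach both components.
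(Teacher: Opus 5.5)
Your route can be made to work, but as written it has a gap at exactly the step you call the main obstacle: the final paragraph only lists checks you ``would'' carry out.

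\textbf{Why the component issue is real.} Because you require $u_0=2t(1-\rho)$ itself to be a partner, you are forced to $t_0<0$. There $d(t_0)>0$, so $E_{t_0}(\R)$ has two components. Using $c(t)-\beta(t)=4$, one finds $x-c(t)=\frac{8(t+1)}{1-q}-4$. So for $t_0<0$, every real point with $-1<q<-1-2t_0$ lies on the bounded component. Such admissible points exist: $\Delta_{t_0,-1}=16t_0^4>0$, and as $q\to-1^+$ the root $u_0$ tends to $-\sqrt{1-\rho^2}$ from inside the admissible interval. Poincar\'e--Hurwitz density in $E_{t_0}(\R)^0$ does not reach them.

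\textbf{Why the torsion fallback fails.} It does not work uniformly. The point $2P_4=(c(t),0)$ lies on the identity component, and $x(P_4)-c(t)=4t(2t+1)>0$ for $t<-\tfrac12$. So for such $t$ the whole subgroup $\langle P_4\rangle$ stays on the identity component.

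\textbf{The missing line.} Compute $x(P(t))-c(t)=4t(2t+1)^2$, which is negative for $t<0$, $t\ne-\tfrac12$. For $t<0$ one has $c(t)>0$ and $c(t)^2-4d(t)=16t^2(2t+1)^2>0$, so $c(t)$ is the largest root of the cubic. Hence $P(t_0)$ lies on the bounded component. When $P(t_0)$ has infinite order, its even multiples are dense in $E_{t_0}(\R)^0$ and its odd multiples are dense in the other component. So $E_{t_0}(\Q)$ is dense in all of $E_{t_0}(\R)$, and your perturbation argument then goes through.

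\textbf{Comparison with the paper.} The paper avoids the two-component problem by indexing the same configurations by the \emph{positive} root. Suppose $-1<q<\rho<1$ and $\Phi_q$ has three real roots. They sum to $0$, and each satisfies $u\bigl(u^2-(1-\rho^2)\bigr)=\frac{2(1-\rho)^2(\rho-q)(1+q)}{1-q}>0$. So exactly one root is positive and the other two lie in $(-\sqrt{1-\rho^2},0)$.

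Taking $t>0$ makes $u_0>0$ the spurious root, and both roots of the quadratic factor are the partners. Thus Proposition~\ref{prop:several-isosceles-partners}, which presupposes that $u_0$ is a partner, is never needed.

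The paper starts from the explicit point $B_t=(c(t)+4t,\,8t^2(t+1))$ coming from $\Delta_{t,-1}=16t^4$. This point lies on $E_t(\R)^0$: the fibre is connected for $0<t<4$, and for $t>4$ one has $x(B_t)-c(t)=4t>0$ with $c(t)$ the largest root. So density of $\langle 2P(t)\rangle$ in the identity component suffices.

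Moreover, $\eta(t)=\rho(t,-1)$ maps $(0,\infty)$ homeomorphically onto $(-1,1)$. So for each target interval $U$ one fixes a single rational fibre directly, without first building a real genuine configuration and then perturbing $t$. Your route reaches the same triangles but pays for the negative indexing with the component analysis above.
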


\begin{proof}
Let $U\subset(-1,1)$ be a nonempty open interval. For $t>0$, put
\[
\eta(t)=\rho(t,-1)=\frac{4t^2-1}{4t^2+1}.
\]
The map $\eta:(0,\infty)\to(-1,1)$ is a homeomorphism. Hence we may
choose $t\in\Q_{>0}$ such that $\eta(t)\in U$, the fibre $E_t$ is
smooth, $P(t)$ has infinite order, and $t\ne\frac12$. This is possible
by Lemma~\ref{lem:k3-positive-rank}, since only finitely many rational
values of $t$ need to be avoided.

Although $q=-1$ is not admissible for the geometric problem, since
$\tau(-1)$ is degenerate, it is a convenient boundary point of the
square-condition curve. We use it only as a base point and then move
to admissible values $q>-1$ nearby.
At $q=-1$ we have $\Delta_{t,-1}=16t^4$, so
$(t,-1,-4t^2)$ is a rational point of the square-condition surface.
Under the birational map of Proposition~\ref{prop:square-surface-k3},
it corresponds to
\[
B_t=\bigl(4t+c(t),\,8t^2(t+1)\bigr)\in E_t(\Q).
\]
We claim that $B_t\in E_t(\R)^0$, the connected component containing the identity. If $0<t<4$, then $d(t)<0$, apart
from the singular value $t=1$, so $E_t(\R)$ is connected. If $t>4$,
then $d(t)>0$, $c(t)>0$, and
$c(t)^2-4d(t)=16t^2(2t+1)^2>0$. Thus $c(t)$ is the largest real root
of the cubic. Since $x(B_t)-c(t)=4t>0$, the point $B_t$ lies on the
unbounded component, namely $E_t(\R)^0$.

For any real elliptic curve, the quotient
$E_t(\R)/E_t(\R)^0$ has order at most $2$, so the double of any real point
belongs to the identity component. Hence $2P(t)\in E_t(\R)^0$ and has
infinite order. Since $E_t(\R)^0$ is topologically a circle,
$\langle 2P(t)\rangle$ is dense in $E_t(\R)^0$.

We next work near $(q,w)=(-1,-4t^2)$. Since $w\ne0$, the equation
$w^2=\Delta_{t,q}$ defines a real branch with $q$ as a local
coordinate. For $q>-1$ sufficiently close to $-1$, continuity gives
\[
-1<q<\rho(t,q)<1,\qquad
\rho(t,q)\in U,\qquad
\Delta_{t,q}>0.
\]
After shrinking the interval if necessary, the corresponding
$\theta$-triangle is scalene. Indeed, by Lemma~\ref{lem:iso-theta},
the only non-degenerate isosceles cases are
$q=0$, $q=1-\sqrt{2(1-\rho)}$, and $q=2\rho-1$. None can accumulate
at $q=-1$, except possibly the last when $\eta(t)=0$, which would
give $t=\frac12$, and this value was excluded.

The corresponding points with $q>-1$ sufficiently close to $-1$
form a nonempty open arc $\mathcal N\subset E_t(\R)^0$. By the
density of $\langle2P(t)\rangle$, the arc $\mathcal N$ contains
infinitely many rational points. For every such point, the
corresponding $q$ and $w$ are rational and satisfy
$w^2=\Delta_{t,q}$.

Fix one of these rational points and write
$\rho=\rho(t,q)$ and $u_0=2t(1-\rho)>0$. Since $u_0$ is a root of
$\Phi_q$, we have
\[
\Phi_q(u)
=
(u-u_0)
\left(
u^2+u_0u+u_0^2-(1-\rho^2)
\right).
\]
The identity computed in the proof of
Proposition~\ref{prop:several-isosceles-partners} shows that the
discriminant of the quadratic factor is a nonzero rational square
whenever $\Delta_{t,q}$ is. Let $u_1,u_2\in\Q$ be its two distinct
roots.

Since $-1<q<\rho<1$, every root $u$ of $\Phi_q$ satisfies
\[
u\bigl(u^2-(1-\rho^2)\bigr)
=
\frac{2(1-\rho)^2(\rho-q)(1+q)}{1-q}>0.
\]
Applying this first to $u_0>0$ gives $u_0^2>1-\rho^2$. Therefore
\[
u_1+u_2=-u_0<0,
\qquad
u_1u_2=u_0^2-(1-\rho^2)>0,
\]
so $u_1,u_2<0$. Applying the same identity to each $u_i$ then gives
$u_i^2<1-\rho^2$, and hence
\[
-\sqrt{1-\rho^2}<u_i<0
\qquad (i=1,2).
\]
Thus $\iota(u_1)$ and $\iota(u_2)$ are two distinct non-degenerate
rational isosceles triangles. Both have the same perimeter and area
as the same scaled $\theta$-triangle $k\tau(q)$, and since this
$\theta$-triangle is scalene, both pairs are genuine.

Finally, for fixed $t$ the inverse birational map is
$q=1+\alpha/(x-\beta)$, so $q$ has finite fibres on $E_t$. Hence the
infinitely many rational points of $\mathcal N$ give infinitely many
rational values of $q$. For fixed $t$, the equation
$\rho(t,q)=r$ is quadratic in $q$, so each value of $\rho$ arises from
at most two values of $q$. Therefore infinitely many distinct rational
values of $\rho$ occur in $U$. Since $U$ was arbitrary, the desired
set is dense in $(-1,1)$.
\end{proof}

\begin{rem}
Theorem~\ref{thm:infinitely-many-two-partners} also contains explicit
one-parameter subfamilies.
Applying the inverse birational map of Proposition~\ref{prop:square-surface-k3} to the section $P$ gives $q=\rho=\frac{4t^2-1}{4t^2+1},$
which corresponds to a degenerate $\theta$-triangle. Applying the same
map to $2P$ instead gives
\[
q_t=\frac{4t^2-2t-1}{4t^2+1}
\qquad \mbox{
and }\qquad
\rho_t=
\frac{16t^4+16t^3+16t^2-12t-3}
     {(4t^2+1)(4t^2+4t+5)}.
\]
For $t\in\Q$, $t>\frac14$, put
\[
\begin{aligned}
T_t={}&\bigl(
(4t^2+1)^2,\,
t(4t-1)(4t^2+4t+5),\,
(t+1)(20t^2-4t+1)
\bigr),\\
I_{1,t}={}&\bigl(
4t(t+1)(4t-1),\,
16t^4+8t^3+14t^2-2t+1,\,
16t^4+8t^3+14t^2-2t+1
\bigr),\\
I_{2,t}={}&\bigl(
4t(t+1)(4t^2+1),\,
8t^4+8t^3+18t^2-6t+1,\,
8t^4+8t^3+18t^2-6t+1
\bigr).
\end{aligned}
\]
A direct calculation shows that $T_t$ has an angle with cosine
$\rho_t$, and that all three triangles have perimeter
\[
2(16t^4+16t^3+20t^2-4t+1)
\]
and squared area
\[
4t^2(t+1)^2(4t-1)^2(4t^2+1)^2
(16t^4+16t^3+20t^2-4t+1).
\]
For $t>\frac14$ all three triangles are non-degenerate, and
$I_{1,t}$ and $I_{2,t}$ are distinct isosceles triangles. Moreover,
for rational $t>\frac14$, the triangle $T_t$ is scalene unless
$t=\frac12$ or $t=\frac32$. Thus, for
\[
t\in\Q,\qquad t>\frac14,\qquad
t\notin\left\{\frac12,\frac32\right\},
\]
the pairs $(T_t,I_{1,t})$ and $(T_t,I_{2,t})$ are two distinct genuine
pairs with the same value $\rho=\rho_t$.
\end{rem}

\section{Jacobian ranks}
\label{sec:jacobian-ranks}

We now investigate the Mordell--Weil ranks of the Jacobians
$J_\rho$. The apex family is particularly suitable for a systematic
computation, since it is parametrized by a single rational parameter
$t\in(-1,0)$. Proposition~\ref{prop:generic-rank} shows that the
generic Jacobian in this family has rank $1$ over $\Q(t)$, but the rank
may increase after specialization. We computed the
$2$-descent rank bound for every reduced parameter
\begin{equation} \label{eq:198}
-1<t<0,\qquad t\ne-\frac12,\qquad
\operatorname{den}(t)\leq25,
\end{equation}
where $\operatorname{den}(t)$ denotes the denominator of $t$ when written in lowest terms.

Exactly thirty-five of these specializations have rank bound $1$;
they are the values listed in Table~\ref{tab:rank-one-apex} below.
The same search also produces specializations with larger rank bounds,
some of which are investigated in Proposition~\ref{prop:apex-higher-ranks}.
The computation is described in
Appendix~\ref{app:apex-rank-census}.

\subsection{Rank-one specializations and uniqueness}

\begin{thm}\label{thm:rank-one-apex}
For each of the thirty-five values of $\rho$ listed in
Table~\ref{tab:rank-one-apex},
\[
\operatorname{rank}J_\rho(\Q)=1
\qquad\text{and}\qquad
\#C_\rho(\Q)=10.
\]
Moreover, exactly one rational point of $C_\rho$ gives a genuine pair.
Consequently, the apex pair displayed in
Table~\ref{tab:rank-one-apex} is the unique genuine pair for each of
these values of $\rho$.
\end{thm}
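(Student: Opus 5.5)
The argument is computational and uniform in the thirty-five values, so I would run the same pipeline for each value $t$ in Table~\ref{tab:rank-one-apex}, with $\rho=\rho_A(t)$ as in Theorem~\ref{thm:family-A}.

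\emph{Step 1: the rank.} The discriminant formula of Section~\ref{sec:two-geometries} shows that $C_\rho$ is a smooth genus-$2$ curve, because $\rho\notin\{\frac12,\frac{47}{49}\}$ and $-1<\rho<1$. A $2$-descent on $J_\rho$, as in the census of Appendix~\ref{app:apex-rank-census}, gives $\operatorname{rank}J_\rho(\Q)\le 1$. For the lower bound I would take the coincidence class $[P_c-\infty_+]$, where $P_c=(\rho-1,2(\rho-1)^2(2\rho-1))$. Computing its canonical height shows it is nonzero, so the class has infinite order and the rank is exactly $1$. If the coincidence class happened to be torsion for some value, I would use instead the class of the apex point $u_-=ts^2$ minus $\infty_+$, or the difference of the two points at infinity.

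\emph{Step 2: list the known points.} I would write down ten explicit rational points on $C_\rho$:
\begin{itemize}
\item the two points at infinity;
\item the degenerate points $(0,\pm 2(1-\rho)^2(1+\rho))$;
\item the coincidence points $u=\rho-1$;
\item the two points with $u=u_-=ts^2$ coming from the apex pair;
\item the two points with $u=u_+$, the other root of the quadratic factor in \eqref{eq:apex}.
\end{itemize}
Since $\Phi_q$ at $q=1-s$ has roots $-\frac{s(2-s)}2$ (the congruent configuration) and $u_\pm$, and each root gives a rational $k$, these are points on $C_\rho$. This suggests why the count is generically $10$. Before relying on this list, I must check that these ten points are pairwise distinct for each tabulated value.

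\emph{Step 3: Chabauty--Coleman.} This is the main obstacle. The rank is $1<2=g$, so Chabauty--Coleman applies. I would choose a prime $p$ of good reduction for which the curve has enough $\mathbb F_p$-points and the annihilating differential behaves well. I would then compute the Coleman integral $\int\omega$ on each residue disc and count its zeros. The aim is to show that each residue disc contains only the known points.

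When the zero count exceeds the number of known points in some disc, I would switch primes or combine several primes via the Mordell--Weil sieve. This is done with Magma's \texttt{Chabauty} for genus $2$ with a rank-$1$ generator. It needs a generator of $J_\rho(\Q)$ modulo torsion, or at least a point of infinite order together with an index check at the chosen primes. Making this work for all thirty-five values, with their large coefficients, is the delicate part.

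\emph{Step 4: filter for genuine pairs.} With $C_\rho(\Q)$ known to have exactly ten points, I would sort them:
\begin{itemize}
\item the points at infinity and $u=0$ give no triangle, since they are degenerate;
\item $u_+>0$ is excluded;
\item $u=\rho-1$ is either inadmissible (when $\rho\le0$) or gives coincident triangles. When $-1<\rho-1<-\sqrt{1-\rho^2}$ it is also degenerate; in any case $\Phi_q$ forces $q=2\rho-1$ and $k=1$, so the triangles coincide.
\end{itemize}
For the $u_-$ points, both signs of $y$ give the two roots $k$ of the quadratic in $k$. Both are admissible and correspond to $q$ and $q'$, which are congruent by Proposition~\ref{prop:fixed-isosceles}, so they define a single pair up to congruence. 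Hence exactly one genuine pair remains, namely the apex pair of Theorem~\ref{thm:family-A}.
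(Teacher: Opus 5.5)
Your pipeline (2-descent upper bound, positive canonical height of the coincidence class, Magma's \texttt{Chabauty}, then filtering) is the same as the paper's. However, your description of $C_\rho(\Q)$ is wrong, and as a result Step~4 never handles a point that genuinely needs an argument.

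Every $u$-root of $\Phi_{1-s}$ is a Weierstrass point of $C_\rho$. The reason is that $q=1-s$ is the fixed point of the involution $q\mapsto\frac{2\rho-1-q}{1-q}$ of Proposition~\ref{prop:fixed-isosceles}. So for such $u$ the two roots of the quadratic in $k$ coincide (both equal $s$), and $F_\rho(u)=0$. In particular $u_-=ts^2=\frac{4t(t+1)^2}{(2t^2+t+1)^2}$ and $u_+=-\frac{4t(t+1)}{(2t^2+t+1)^2}$ both have $y=0$. Your planned distinctness check therefore fails, and your list contains only eight points.

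The two missing points are the remaining rational Weierstrass points, namely the roots $u=\frac{s(s\pm2)}{2}$ of $Q_\rho$ (its discriminant $8(1-\rho)=4s^2$ is a square exactly in the apex family). One is $u=-\frac{s(2-s)}{2}$, the congruent root of the linear factor in \eqref{eq:apex}; the other is $u=\frac{s(2+s)}{2}$. So $C_\rho(\Q)$ consists of $\infty_\pm$, the two points over each of $u=0$ and $u=\rho-1$, and four points $(u_i,0)$.

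This matters in Step~4. The point $\bigl(-\frac{s(2-s)}{2},0\bigr)$ satisfies $-\sqrt{1-\rho^2}<u<0$, since this is equivalent to $2-s<2+s$. So it passes the admissibility test for $\iota(u)$, and must be ruled out by checking that $\iota(u)=\bigl(s^3(2-s),s^2(2-s),s^2(2-s)\bigr)$ is a permutation of $s\,\tau(1-s)$. You note this root in Step~2 but never place it in $C_\rho(\Q)$ or exclude it. The point $u=\frac{s(2+s)}{2}>0$ is excluded trivially.

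Your treatment of the apex point is also off. There is no pair of points $(ts^2,\pm y)$ giving $q$ and $q'$; there is a single point $(ts^2,0)$ with the double root $k=s$ and $q=1-s$. That is precisely why ``exactly one rational point gives a genuine pair'' holds. On your count, two rational points would give it, which contradicts the statement, even though uniqueness of the pair up to congruence would survive.
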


\begin{table}[H]
\centering
\setlength{\tabcolsep}{4pt}
\renewcommand{\arraystretch}{1.12}

\begin{tabular}{c|c|c||c|c|c}
\hline
$t$ & $\rho$ & pair
&
$t$ & $\rho$ & pair
\\
\hline

$-\frac{1}{18}$ &
$-\frac{11551}{11858}$ &
$\begin{array}{c}
(77,77,153)\\
(9,149,149)
\end{array}$
&
$-\frac{11}{21}$ &
$\frac{14513}{25538}$ &
$\begin{array}{c}
(420,452,452)\\
(431,431,462)
\end{array}$
\\
\hline

$-\frac{1}{16}$ &
$-\frac{14159}{14641}$ &
$\begin{array}{c}
(121,121,240)\\
(16,233,233)
\end{array}$
&
$-\frac{11}{20}$ &
$\frac{28321}{44521}$ &
$\begin{array}{c}
(180,211,211)\\
(191,191,220)
\end{array}$
\\
\hline

$-\frac{1}{12}$ &
$-\frac{4223}{4489}$ &
$\begin{array}{c}
(67,67,132)\\
(12,127,127)
\end{array}$
&
$-\frac{13}{23}$ &
$\frac{27103}{40328}$ &
$\begin{array}{c}
(460,568,568)\\
(499,499,598)
\end{array}$
\\
\hline

$-\frac{2}{21}$ &
$-\frac{152753}{165649}$ &
$\begin{array}{c}
(407,407,798)\\
(84,764,764)
\end{array}$
&
$-\frac{13}{22}$ &
$\frac{26111}{35912}$ &
$\begin{array}{c}
(99,134,134)\\
(112,112,143)
\end{array}$
\\
\hline

$-\frac{1}{8}$ &
$-\frac{727}{841}$ &
$\begin{array}{c}
(29,29,56)\\
(8,53,53)
\end{array}$
&
$-\frac{14}{23}$ &
$\frac{273103}{358801}$ &
$\begin{array}{c}
(414,599,599)\\
(484,484,644)
\end{array}$
\\
\hline

$-\frac{1}{7}$ &
$-\frac{199}{242}$ &
$\begin{array}{c}
(44,44,84)\\
(14,79,79)
\end{array}$
&
$-\frac{9}{14}$ &
$\frac{5503}{6728}$ &
$\begin{array}{c}
(35,58,58)\\
(44,44,63)
\end{array}$
\\
\hline

$-\frac{1}{6}$ &
$-\frac{97}{128}$ &
$\begin{array}{c}
(8,8,15)\\
(3,14,14)
\end{array}$
&
$-\frac{11}{17}$ &
$\frac{12191}{14792}$ &
$\begin{array}{c}
(204,344,344)\\
(259,259,374)
\end{array}$
\\
\hline

$-\frac{3}{17}$ &
$-\frac{5969}{8192}$ &
$\begin{array}{c}
(256,256,476)\\
(102,443,443)
\end{array}$
&
$-\frac{18}{25}$ &
$\frac{616079}{677329}$ &
$\begin{array}{c}
(350,823,823)\\
(548,548,900)
\end{array}$
\\
\hline

$-\frac{3}{14}$ &
$-\frac{2231}{3698}$ &
$\begin{array}{c}
(43,43,77)\\
(21,71,71)
\end{array}$
&
$-\frac{18}{23}$ &
$\frac{555719}{582169}$ &
$\begin{array}{c}
(230,763,763)\\
(464,464,828)
\end{array}$
\\
\hline

$-\frac{5}{23}$ &
$-\frac{15937}{26912}$ &
$\begin{array}{c}
(464,464,828)\\
(230,763,763)
\end{array}$
&
$-\frac{19}{24}$ &
$\frac{170041}{177241}$ &
$\begin{array}{c}
(120,421,421)\\
(253,253,456)
\end{array}$
\\
\hline

$-\frac{3}{13}$ &
$-\frac{1487}{2738}$ &
$\begin{array}{c}
(148,148,260)\\
(78,239,239)
\end{array}$
&
$-\frac{4}{5}$ &
$\frac{1319}{1369}$ &
$\begin{array}{c}
(10,37,37)\\
(22,22,40)
\end{array}$
\\
\hline

$-\frac{1}{4}$ &
$-\frac{23}{49}$ &
$\begin{array}{c}
(7,7,12)\\
(4,11,11)
\end{array}$
&
$-\frac{21}{25}$ &
$\frac{236081}{241081}$ &
$\begin{array}{c}
(200,982,982)\\
(557,557,1050)
\end{array}$
\\
\hline

$-\frac{2}{7}$ &
$-\frac{601}{1849}$ &
$\begin{array}{c}
(43,43,70)\\
(28,64,64)
\end{array}$
&
$-\frac{7}{8}$ &
$\frac{2777}{2809}$ &
$\begin{array}{c}
(8,53,53)\\
(29,29,56)
\end{array}$
\\
\hline

$-\frac{7}{22}$ &
$-\frac{4327}{22898}$ &
$\begin{array}{c}
(107,107,165)\\
(77,151,151)
\end{array}$
&
$-\frac{9}{10}$ &
$\frac{3673}{3698}$ &
$\begin{array}{c}
(5,43,43)\\
(23,23,45)
\end{array}$
\\
\hline

$-\frac{2}{5}$ &
$\frac{79}{529}$ &
$\begin{array}{c}
(23,23,30)\\
(20,28,28)
\end{array}$
&
$-\frac{19}{21}$ &
$\frac{72521}{72962}$ &
$\begin{array}{c}
(84,764,764)\\
(407,407,798)
\end{array}$
\\
\hline

$-\frac{4}{9}$ &
$\frac{1879}{5929}$ &
$\begin{array}{c}
(77,77,90)\\
(72,86,86)
\end{array}$
&
$-\frac{11}{12}$ &
$\frac{16057}{16129}$ &
$\begin{array}{c}
(12,127,127)\\
(67,67,132)
\end{array}$
\\
\hline

$-\frac{9}{20}$ &
$\frac{12281}{36481}$ &
$\begin{array}{c}
(191,191,220)\\
(180,211,211)
\end{array}$
&
$-\frac{14}{15}$ &
$\frac{165199}{165649}$ &
$\begin{array}{c}
(30,407,407)\\
(212,212,420)
\end{array}$
\\
\hline

$-\frac{13}{25}$ &
$\frac{56761}{101761}$ &
$\begin{array}{c}
(600,638,638)\\
(613,613,650)
\end{array}$
&
& &
\\
\hline

\end{tabular}

\caption{The thirty-five specializations of
Theorem~\ref{thm:rank-one-apex}, ordered by increasing $\rho$ down the
left block and then down the right. In each pair, the $\theta$-triangle
is displayed above its isosceles partner, at a common integral scale.}
\label{tab:rank-one-apex}
\end{table}

\begin{proof}
The exhaustive computation described in
Appendix~\ref{app:apex-rank-census}, obtained by running the search
in batches covering all reduced $t\in(-1,0)$ with
$t\ne-\frac12$ and $\operatorname{den}(t)\leq25$, gives
$\operatorname{rank}J_\rho(\Q)\leq1$ for exactly the thirty-five
parameters listed in Table~\ref{tab:rank-one-apex}.

For each of these specializations, let
$P_{\mathrm c}
=
\left(
\rho-1,\,
2(\rho-1)^2(2\rho-1)
\right)
\in C_\rho(\Q)
$
be a coincidence point, and let $\infty_+$ be one of the rational
points at infinity. The computations in
Appendix~\ref{app:rank-one-verification} give
$\widehat h\bigl([P_{\mathrm c}-\infty_+]\bigr)>0$
in every case. Thus the coincidence class has infinite order, and
therefore
$\operatorname{rank}J_\rho(\Q)=1.$

Since $C_\rho$ has genus $2$ and
$\operatorname{rank}J_\rho(\Q)=1$, the Chabauty--Coleman method
applies. The coincidence class has infinite order. The
\textsc{Magma} routine \texttt{Chabauty} does not require this class
to generate the free part of $J_\rho(\Q)$; in rank $1$, any
non-torsion class suffices. In every case, the computation returns
exactly ten rational points.

The four rational Weierstrass points have $u$-coordinates
\[
u_1=-\frac{4t(t+1)}{(2t^2+t+1)^2},\qquad
u_2=-\frac{4t^2(t+1)}{(2t^2+t+1)^2},
\]
\[
u_3=\frac{4t(t+1)^2}{(2t^2+t+1)^2},\qquad
u_4=\frac{4(t+1)(t^2+t+1)}{(2t^2+t+1)^2}.
\]
Thus
\[
\begin{aligned}
C_\rho(\Q)=&\{
\infty_+,\infty_-,
\left(0,\pm2(1-\rho)^2(1+\rho)\right),
\\
&\left(\rho-1,
 \pm2(\rho-1)^2(2\rho-1)\right),
(u_1,0),(u_2,0),(u_3,0),(u_4,0)\}.
\end{aligned}
\]
In particular, $\#C_\rho(\Q)=10$. It remains to determine which of these points give genuine pairs.
The points above $u=0$ are degenerate, while
the points above $u=\rho-1$ are the coincidence points and do not
give a genuine pair. The points at infinity do not
correspond to affine triangle parameters. For the four Weierstrass points, note first that
\[
u_3
=
\frac{4t(t+1)^2}{(2t^2+t+1)^2}
=
t\,s(t)^2
\]
is precisely the apex point of Theorem~\ref{thm:family-A}. Since
$t\in(-1,0)$ and $t\ne-\frac12$, it gives a genuine apex pair. The remaining three Weierstrass points do not give new genuine pairs.
Indeed, $u_1>0$ and $u_4>0$, so they do not satisfy the admissibility
condition
for an isosceles triangle.
Finally, $u_2<0$, but it gives the congruent pair: taking
$q=1-s(t)$ and $k=s(t)$, a direct substitution shows that
$k\tau(q)$ and $\iota(u_2)$ differ only by a permutation of their
sides.

Therefore among the ten rational points of $C_\rho$, exactly one gives
a genuine pair, namely the point $(u_3,0)$ corresponding to the apex
pair. Thus the apex pair is the unique genuine pair.
\end{proof}

\subsection{Higher-rank apex specializations}

The same search produces specializations in which the rank increases.
The following examples have exact ranks $2$, $3$, and $4$.

\begin{prop}\label{prop:apex-higher-ranks}
For the following apex-family specializations, the Mordell--Weil ranks
are
\[
\begin{array}{c|c|c}
t & \rho_A(t) & \operatorname{rank}J_{\rho_A(t)}(\Q)\\
\hline
-\frac13 & -\frac18 & 2\\[1mm]
-\frac15 & -\frac{79}{121} & 3\\[1mm]
-\frac47 & \frac{1927}{2809} & 4.
\end{array}
\]
\end{prop}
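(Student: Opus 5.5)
For each of the three specializations, the plan is to prove the exact rank by squeezing it between a $2$-descent upper bound and a matching lower bound from explicit divisor classes. This is the same strategy as in Proposition~\ref{prop:generic-rank} and Theorem~\ref{thm:rank-one-apex}, now with more independent classes.

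For the upper bound, I would run \textsc{Magma}'s \texttt{RankBound} (the $2$-Selmer computation) on $J_{\rho}$ for $\rho=-\frac18,-\frac{79}{121},\frac{1927}{2809}$. The apex-family curves have all four Weierstrass points above $u_1,\dots,u_4$ rational, as in the proof of Theorem~\ref{thm:rank-one-apex}. So $J_\rho[2]$ has a large rational subgroup, and the descent is comparatively cheap. These Weierstrass points also contribute rational $2$-torsion, which must be subtracted from the Selmer bound when passing to the rank. I expect the census in Appendix~\ref{app:apex-rank-census} to already produce the bounds $2$, $3$ and $4$. If the plain $2$-descent overshoots, I would refine it using the Richelot isogeny from the proof of Proposition~\ref{prop:generic-rank}, which specializes well here: descent via $\phi$ and $\widehat\phi$ often removes Sha-induced slack.

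For the lower bound, the coincidence class $[P_{\mathrm c}-\infty_+]$ is always available. Since the Weierstrass differences are torsion, the extra generators must come from further rational points or non-special degree-$2$ divisors. I would search for points of small height on $C_\rho$ (\texttt{Points} with a bound), together with $[\infty_+-\infty_-]$ and the degenerate point $(0,2(1-\rho)^2(1+\rho))$. I would also consider quadratic points giving rational classes, for instance pullbacks of rational points from the Richelot-isogenous or quotient curves. From these I would choose $r$ candidate classes and compute their Néron--Tate height pairing matrix. A positive determinant proves independence modulo torsion, exactly as in Proposition~\ref{prop:base-generic-rank}.

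The main obstacle is the lower bound for $t=-\frac47$, where four independent classes must be found. The first thing I would try is a Mumford-representation search over $J(\Q)$ with \texttt{Points(J : Bound:=...)}. Failing that, I would search for rational points on the two elliptic quotients attached to the Richelot splitting and pull them back. If even the upper bound resists, with Selmer rank exceeding the number of points found, I would try a $\phi$-descent or a visibility argument to remove the discrepancy.
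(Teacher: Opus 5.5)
Your proposal is correct and follows the paper's proof. The upper bounds $2$, $3$, $4$ come from \texttt{RankBound}, i.e.\ the $2$-descent already recorded in the census. The matching lower bounds come from exhibiting enough independent classes in $J_\rho(\Q)$. The only difference is packaging: the paper calls \texttt{MordellWeilGroupGenus2} with \texttt{RankOnly := true} and the descent bound as input, which returns a finite-index subgroup of the right torsion-free rank. That routine does the point search and height-based independence check that you would carry out by hand via a N\'eron--Tate determinant.
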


\begin{proof}
For the three specializations, a $2$-descent gives the respective
upper bounds $2$, $3$, and $4$. The \textsc{Magma} computation in
Appendix~\ref{app:apex-higher-ranks} finds finite-index subgroups of
the corresponding Mordell--Weil groups having torsion-free ranks
$2$, $3$, and $4$, respectively. Hence the upper and lower bounds
agree in each case.
\end{proof}

\subsection{Other rank-two specializations}

The following four scalene examples arose in the computational search
for genuine pairs.

\begin{prop}\label{prop:scalene-rank-two}
For
\[
\rho\in
\left\{
-\frac35,-\frac{17}{32},\frac27,\frac45
\right\},
\]
the Jacobian $J_\rho$ has Mordell--Weil rank
$\operatorname{rank}J_\rho(\Q)=2$.
The corresponding genuine pairs are displayed in
Table~\ref{tab:scalene-rank-two}. In each case, the class arising from
the genuine pair and the coincidence class are independent.
\end{prop}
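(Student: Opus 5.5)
For each of the four values $\rho\in\{-\frac35,-\frac{17}{32},\frac27,\frac45\}$ the plan is the same computation done in the earlier rank proofs: a $2$-descent for the upper bound, and two independent explicit divisor classes for the lower bound.

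\emph{Genuine pairs.} For each $\rho$, first locate the genuine pair from the computational search. Concretely, I would take the table entry (scalene $\theta$-triangle with parameters $q,k$ and isosceles partner $\iota(u)$) and verify four things:
\begin{itemize}
\item $(u,y)$ lies on $C_\rho$, i.e.\ $F_\rho(u)$ is a rational square;
\item the system \eqref{eq:system} holds with rational $k,q$;
\item the non-degeneracy conditions $|q|<1$, $\rho>q$ and $-\sqrt{1-\rho^2}<u<0$ hold;
\item the $\theta$-triangle is scalene, so the pair is non-congruent.
\end{itemize}
This gives a rational point $P_g\in C_\rho(\Q)$ distinct from the trivial points listed in Section~\ref{sec:two-geometries}.

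\emph{Upper bound.} Since $C_\rho$ is smooth of genus $2$ (none of these values is $\frac12$ or $\frac{47}{49}$), run \textsc{Magma}'s \texttt{RankBound} (a $2$-descent on $J_\rho$) and obtain $\operatorname{rank}J_\rho(\Q)\le 2$ in each case. If descent alone gave a larger bound for some value, I would try to sharpen it using the Richelot/isogeny descent when $F_\rho$ admits a suitable factorization. Failing that, I would use a Cassels--Tate pairing or visibility argument. I expect this to be the main obstacle, because Section~\ref{sec:jacobian-ranks} stresses that $2$-descent frequently fails to be sharp.

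\emph{Lower bound.} Form the classes $D_g=[P_g-\infty_+]$ and $D_{\mathrm c}=[P_{\mathrm c}-\infty_+]$, where $P_{\mathrm c}=(\rho-1,2(\rho-1)^2(2\rho-1))$ is the coincidence point. Compute their Néron--Tate height pairing matrix exactly as in the proof of Proposition~\ref{prop:base-generic-rank}, and check that its determinant $\widehat h(D_{\mathrm c})\widehat h(D_g)-\langle D_{\mathrm c},D_g\rangle^2$ is positive. This shows $D_g$ and $D_{\mathrm c}$ are independent modulo torsion, which proves both the independence claim and $\operatorname{rank}J_\rho(\Q)\ge 2$.

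Combining the upper and lower bounds gives rank exactly $2$ for each of the four values of $\rho$.
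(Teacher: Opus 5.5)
Your proposal is correct and matches the paper's proof: in each of the four cases, \textsc{Magma}'s $2$-descent (\texttt{RankBound}) gives $\operatorname{rank}J_\rho(\Q)\le 2$, and a positive determinant of the N\'eron--Tate height-pairing matrix of the coincidence class and the genuine-point class gives both the independence claim and $\operatorname{rank}J_\rho(\Q)\ge 2$. Your fallback options (Richelot descent, Cassels--Tate pairing) are not needed, since the $2$-descent bound is already sharp for these four values.
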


\begin{proof}
For each of the four values, a $2$-descent in \textsc{Magma}
(see Appendix~\ref{app:rank-two-scalene}) gives
$\operatorname{rank}J_\rho(\Q)\leq2$.

Let $D_0$ be the divisor class obtained from the coincidence point and
let $D_1$ be the divisor class obtained from the indicated genuine
scalene point. A canonical-height computation gives a positive
determinant for the N\'eron--Tate height-pairing matrix of
$D_0,D_1$ in each case (see Table \ref{tab:scalene-rank-two}).

\begin{table}[H]
\centering
\small
\begin{tabular}{ccccc}
\toprule
$\rho$ & $u$ & genuine pair & \textsc{Magma} rank bound & $\det H$ \\
\midrule
$-\frac35$
& $-\frac2{15}$
& $\bigl((224,675,829),(444,444,840)\bigr)$
& $2$ & $1.2555737833\ldots$ \\

$-\frac{17}{32}$
& $-\frac{35}{48}$
& $\bigl((64,108,152),(42,141,141)\bigr)$
& $2$ & $1.9685750897\ldots$ \\

$\frac27$
& $-\frac9{14}$
& $\bigl((224,275,301),(220,290,290)\bigr)$
& $2$ & $1.9295003865\ldots$ \\

$\frac45$
& $-\frac{17}{30}$
& $\bigl((675,3293,3808),(420,3678,3678)\bigr)$
& $2$ & $1.0964546558\ldots$ \\
\bottomrule
\end{tabular}
\caption{The four scalene specializations of
Proposition~\ref{prop:scalene-rank-two}. In the pair column, the first
triple is the scalene $\theta$-triangle and the second is the
isosceles triangle; the two are represented at a common integral
scale. The quantity $\det H$ is the determinant of the
N\'eron--Tate height-pairing matrix of the coincidence class and the
class arising from the genuine pair.}
\label{tab:scalene-rank-two}
\end{table}
Hence the two classes are independent and
$\operatorname{rank}J_\rho(\Q)\geq2$ and the result follows.
\end{proof}

The base family also contains simple specializations of exact rank $2$.

\begin{prop}\label{prop:base-rank-two-examples}
The base-family specializations
$
(m,\rho)=\left(\frac13,\frac34\right)$ and
$(m,\rho)=\left(2,\frac2{11}\right)$
satisfy
$\operatorname{rank}J_\rho(\Q)=2.$
\end{prop}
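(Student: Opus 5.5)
The proof will follow the pattern of Propositions~\ref{prop:generic-rank} and~\ref{prop:scalene-rank-two}: a $2$-descent for the upper bound, and two independent explicit divisor classes for the lower bound. For each of the two values I first specialize the base-family formulas of Theorem~\ref{thm:family-B}. At $m=\frac13$ we get $\rho=\frac34$. At $m=2$ we get $\rho=\frac2{11}$. Neither value is $\frac12$ or $\frac{47}{49}$, so by the discriminant formula for $F_\rho$ the curve $C_\rho$ is a smooth genus-$2$ curve and $J_\rho$ is defined.

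For the upper bound, I run \textsc{Magma}'s \texttt{RankBound} ($2$-descent on the Jacobian) on the model $y^2=F_\rho(u)$, after scaling to integral coefficients. I expect it to return $2$ in both cases. This is the main obstacle: if the descent returns a larger bound because of nontrivial $2$-torsion in Sha, I would try the descent on the Richelot-isogenous Jacobian, if the sextic factors suitably, or a visibility/Mordell--Weil sieve argument. The ideal situation, of course, is that plain $2$-descent already gives $2$.

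For the lower bound, I use the two classes from Proposition~\ref{prop:base-generic-rank}. These are $D_B=[P_B-\infty_+]$, coming from the genuine base point, and $D_{\mathrm c}=[P_{\mathrm c}-\infty_+]$, coming from the coincidence point, both specialized at $m=\frac13$ and at $m=2$. Explicitly:
\begin{itemize}
\item at $m=\frac13$, $P_B$ has $u=-\frac{8}{64}\cdot\ldots$, namely $u_B=(m+1)(m-3)/(m^2+7)$;
\item at $m=2$, $u_B=-\frac{3}{11}$ and $P_{\mathrm c}$ has $u=-\frac9{11}$.
\end{itemize}
Note that independence at $m=\frac73$ does not imply independence at other specializations, so I will compute directly. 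I compute the N\'eron--Tate height pairing matrix of $D_{\mathrm c}$ and $D_B$ in \textsc{Magma} and check that its determinant is positive. This shows the two classes are independent modulo torsion, hence $\operatorname{rank}J_\rho(\Q)\ge2$.

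If a determinant came out zero, I would instead search for small-height rational points, or for divisors of degree $2$ defined over quadratic fields, to produce a second independent class. Combining the two bounds then gives exact rank $2$ for both values. The code would be recorded in the appendix alongside that of Proposition~\ref{prop:scalene-rank-two}.
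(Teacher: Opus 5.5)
Your proposal is correct and matches the paper's proof: \texttt{RankBound} gives $\operatorname{rank}J_\rho(\Q)\le 2$ in both cases. The N\'eron--Tate height-pairing matrix of the coincidence class and the genuine base class has positive determinant ($0.3000\ldots$ for $\rho=\frac34$ and $0.9190\ldots$ for $\rho=\frac2{11}$), which gives rank at least $2$. One small correction: at $m=\frac13$ the base point has $u_B=-\frac12$ and the coincidence point has $u=-\frac14$, so your garbled expression there should simply read $u_B=-\frac12$.
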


\begin{proof}
In both cases a $2$-descent
(see Appendix~\ref{app:rank-two-base}) gives
$\operatorname{rank}J_\rho(\Q)\leq2$. The coincidence class and the
class arising from the genuine base pair are independent, since the
determinants of their N\'eron--Tate height-pairing matrices are,
respectively,
$
0.3000196095129455\ldots $ and $
0.9190585159633787\ldots.$
Hence $\operatorname{rank}J_\rho(\Q)=2$ in both cases.
\end{proof}

\section{Four or more genuine pairs}
\label{sec:four-pairs}

In this section we consider values of $\rho$ carrying at least four
distinct genuine pairs.

\subsection{A rational four-pair family}
We first give a rational one-parameter family of values of $\rho$
carrying at least four distinct genuine pairs.
For $z\in\Q$, let
\[
I(z)=
\left(
1-z^2,\,
\frac{1+z^2}{2},\,
\frac{1+z^2}{2}
\right),
\]
which is an isosceles triangle of perimeter $2$ whenever $0<z<1$ with area $z(1-z^2)/2$.
Thus two distinct triangles $I(\alpha)$ and $I(\beta)$ have the same area precisely when \[ \alpha^2+\alpha \beta+\beta^2=1. \]
This is a rational conic.
A convenient parametrization is \[ \alpha=\frac{4\lambda}{\lambda^2+3}, \qquad \beta=\frac{(1-\lambda)(\lambda+3)}{\lambda^2+3}, \qquad \lambda\in\mathbb{Q}.\]
Hence $I(\alpha)$ and $I(\beta)$ have the same area.

The same parametrization also produces a second equal-area pair. For this parametrization, put
\[ r=r(\lambda) = \frac{6\lambda(1-\lambda^2)(9-\lambda^2)} {(\lambda^2+3)^3} \qquad \text{ and }\qquad\delta=\delta(\lambda) = \frac{(3-\lambda^2)(3+6\lambda-\lambda^2) (3-6\lambda-\lambda^2)} {(\lambda^2+3)^3}. \]
A direct calculation gives $\delta^2+3r^2=1$. Thus, if \[ z_\pm=r\pm\delta, \] then \[ z_-^2+z_-z_++z_+^2 = 3r^2+\delta^2 = 1. \] Hence $I(z_-)$ and $I(z_+)$ form a second equal-area pair. The formulas below package these two simultaneous equal-area coincidences so that the corresponding scalene $\theta$-triangles have the same value of $\rho$. Define $\rho_4(\lambda)=\frac{1-r^2}{1+r^2}$, and put \[ T_0= \left( 2-4r^2,\, 2r^2+\delta,\, 2r^2-\delta \right), \qquad T_1= \left( \frac{2}{3},\, \frac{2+\delta}{3},\, \frac{2-\delta}{3} \right). \]

\begin{thm}\label{thm:four-pair-rational}
For every
$\lambda\in\Q$ and $\frac13\leq\lambda\leq\frac37$,
the value
$\rho=\rho_4(\lambda)$
admits at least four distinct genuine pairs, namely
\[
\bigl(T_0,I(z_-)\bigr),
\qquad
\bigl(T_0,I(z_+)\bigr),
\qquad
\bigl(T_1,I(\alpha)\bigr),
\qquad
\bigl(T_1,I(\beta)\bigr).
\]
In particular, there are infinitely many rational values of $\rho$
carrying at least four distinct genuine pairs.
\end{thm}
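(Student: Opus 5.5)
The plan is to verify directly that the four displayed pairs are genuine pairs for $\rho=\rho_4(\lambda)$. The work splits into four checks: angle, perimeter, area, and non-degeneracy/distinctness. After that, we deduce infinitude from the non-constancy of $\rho_4$.

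\emph{Angle check.} For $T_1=(\frac23,\frac{2+\delta}{3},\frac{2-\delta}{3})$ we apply the law of cosines to the angle opposite the side $\frac23$. The two adjacent sides have product $(4-\delta^2)/9$ and squared-sum $(8+2\delta^2)/9$. Hence
\[
\cos=\frac{(8+2\delta^2)-4}{2(4-\delta^2)}=\frac{2+\delta^2}{4-\delta^2}.
\]
Substituting $\delta^2=1-3r^2$ gives $\frac{3-3r^2}{3+3r^2}=\rho_4$.

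For $T_0$ we take the angle opposite $2-4r^2$. The adjacent sides have product $4r^4-\delta^2$ and squared-sum $8r^4+2\delta^2$. With $\delta^2=1-3r^2$, the law of cosines reduces to
\[
\frac{8r^4+2\delta^2-(2-4r^2)^2}{2(4r^4-\delta^2)}=\frac{(1-r^2)\cdot(\cdots)}{(1+r^2)\cdot(\cdots)}.
\]
This is a polynomial identity in $r$ that I would verify symbolically. If it came out with the roles of the sides reversed, I would locate which angle of $T_0$ carries $\rho_4$. Since $\rho_4$ is rational, that is all the definition of a $\theta$-triangle requires.

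\emph{Perimeter and area.} Both $T_0$ and $T_1$ have perimeter $2$, matching the perimeter of $I(z)$. For areas I would use Heron's formula with semiperimeter $1$. For $T_1$ this gives
\[
\mathrm{Area}^2=\tfrac13\cdot\tfrac{1-\delta}{3}\cdot\tfrac{1+\delta}{3}=\frac{1-\delta^2}{27}=\frac{r^2}{9}.
\]
For $T_0$ the factors are $4r^2-1$, $1-2r^2-\delta$, and $1-2r^2+\delta$. Their product is $(4r^2-1)((1-2r^2)^2-\delta^2)=(4r^2-1)r^2(4r^2-1)$, so the area equals $r|4r^2-1|$ up to the correct sign.

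On the isosceles side, $I(z)$ has area $z(1-z^2)/2$. The key identity is that $z(1-z^2)$ takes a common value on the conic $z^2+zz'+z'^2=1$. Writing $s=z+z'$ and $p=zz'$ with $s^2-p=1$, this common value is $z(1-z^2)=-zz'(z+z')=-ps$. For $(\alpha,\beta)$ we have $s=\alpha+\beta$, and we must check $-ps/2=r/3$. This should follow since $\alpha+\beta$ and $\alpha\beta$ are explicit in $\lambda$. For $(z_-,z_+)$ we have $s=2r$ and $p=r^2-\delta^2=4r^2-1$, which gives area $r(1-4r^2)$, matching $T_0$. This is a clean rational-function check.

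\emph{Non-degeneracy and distinctness (main obstacle).} This is where the interval $\frac13\le\lambda\le\frac37$ enters. We need the following: $0<\alpha,\beta<1$ and $0<z_\pm<1$, so that all isosceles triangles are non-degenerate; $0<\delta<2$ and the triangle inequalities for $T_0$ (equivalently $4r^2>1$, and $1-2r^2>|\delta|$ after fixing the sign convention), so that $T_0,T_1$ are non-degenerate; and $T_0,T_1$ scalene, so each is non-congruent to its isosceles partner. I would reduce each condition to the sign of an explicit polynomial in $\lambda$ (for instance $3+6\lambda-\lambda^2>0$ and $3-6\lambda-\lambda^2$ near its root $\lambda=-3+2\sqrt3\approx0.464$) and check its sign on $[\frac13,\frac37]$ by locating real roots. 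The four pairs are distinct provided $\{\alpha,\beta\}\cap\{z_-,z_+\}=\emptyset$ and $\alpha\ne\beta$, $z_-\ne z_+$, since the isosceles triangles then differ. The endpoints may produce a coincidence, such as $\alpha=\beta$, which would force excluding finitely many values; I expect this bookkeeping of finitely many exceptional $\lambda$ to be the delicate part.

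\emph{Infinitude.} Finally, $\rho_4$ is a nonconstant rational function of $\lambda$, so its fibres are finite. Therefore infinitely many rational $\lambda\in[\frac13,\frac37]$ give infinitely many distinct $\rho$.
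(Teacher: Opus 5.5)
Your plan follows the paper's proof: perimeter $2$ for all six triangles, the law of cosines at the angle between the last two sides, Heron's formula, then non-degeneracy and distinctness on $[\frac13,\frac37]$, then infinitude. Your finite-fibre argument for infinitude is fine; the paper uses $r'(\lambda)>0$. However, two steps fail as written.

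\textbf{The area identity has the wrong sign.} From $z^2+zz'+z'^2=1$ you get $1-z^2=z'(z+z')$, so $z(1-z^2)=+zz'(z+z')=ps$, not $-ps$.
\begin{itemize}
\item With your sign, the check you propose for $(\alpha,\beta)$, namely $-ps/2=r/3$, is false.
\item Your area $r(1-4r^2)$ for $I(z_\pm)$ is negative exactly where you need $4r^2>1$.
\item With the correct sign, $\alpha\beta(\alpha+\beta)=\frac{4\lambda(1-\lambda^2)(9-\lambda^2)}{(\lambda^2+3)^3}=\frac{2r}{3}$, and $I(z_\pm)$ has area $r(4r^2-1)$, as in the paper.
\end{itemize}
In your $T_0$ cosine computation, the factor you left as $(\cdots)$ is $4r^2-1$, in both numerator and denominator.

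\textbf{The interval analysis is not carried out.} You call this the main obstacle, and you expect that finitely many $\lambda$ may have to be excluded. That would prove less than the theorem, which covers every rational $\lambda$ in the closed interval. In fact there are no exceptions. The missing observation is that, via $\delta^2=1-3r^2$, every condition collapses to $0<\delta<\frac12$:
\begin{itemize}
\item $4r^2>1\iff\delta^2<\frac14$.
\item $1-2r^2-|\delta|=\frac{(1-|\delta|)(1-2|\delta|)}{3}$, so $T_0$ is non-degenerate iff $|\delta|<\frac12$.
\item For $r>0$ and $\delta\ge0$, both $z_->0$ and $z_+<1$ are equivalent to $\delta<\frac12$.
\item $2-4r^2-(2r^2\pm\delta)=\delta(2\delta\mp1)$, so $T_0$ is scalene iff $\delta\notin\{0,\pm\frac12\}$.
\end{itemize}
Both inequalities hold on the interval:
\begin{itemize}
\item $\delta>0$ there, because the root $2\sqrt3-3\approx0.464$ of $3-6\lambda-\lambda^2$ lies beyond $\frac37$.
\item $\delta<\frac12$ there, because $r'(\lambda)>0$ and $r(\frac13)=\frac{180}{343}>\frac12$.
\end{itemize}
This is the paper's ``$\frac12<r<\frac1{\sqrt3}$, $0<\delta<\frac12$''.

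The coincidences you worry about cannot occur for any rational $\lambda$:
\begin{itemize}
\item On the conic, $\alpha=\beta$ forces $3\alpha^2=1$.
\item $z_-=z_+$ forces $\delta=0$.
\item $I(\alpha),I(\beta)$ differ from $I(z_\pm)$, because the areas $\frac r3$ and $r(4r^2-1)$ agree (for $r>0$) only if $r^2=\frac13$. The paper instead notes $T_0\not\cong T_1$.
\end{itemize}
Finally, your condition $0<\delta<2$ only makes the sides of $T_1$ positive. Its triangle inequality needs $|\delta|<1$, which does follow from $\delta^2=1-3r^2$.
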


\begin{proof}
All six triangles have perimeter $2$. We first check that $T_0$ and
$T_1$ have an angle $\theta$ with
$\cos(\theta)=\rho_4(\lambda)$.
In both cases we take the distinguished angle to be the angle between
the last two sides. Using
$\delta^2+3r^2=1,$
the law of cosines applied to $T_0$ gives
\[
\frac{(2r^2+\delta)^2+(2r^2-\delta)^2-(2-4r^2)^2}
     {2(2r^2+\delta)(2r^2-\delta)}
=
\frac{1-r^2}{1+r^2},
\]
and similarly to $T_1$,
\[
\frac{\left(\frac{2+\delta}{3}\right)^2
      +\left(\frac{2-\delta}{3}\right)^2
      -\left(\frac23\right)^2}
     {2\left(\frac{2+\delta}{3}\right)
        \left(\frac{2-\delta}{3}\right)}
=
\frac{1-r^2}{1+r^2}.
\]
Thus both $T_0$ and $T_1$ are rational $\theta$-triangles for
$\rho=\rho_4(\lambda)$.

For an isosceles triangle $I(z)$ with $0<z<1$, its area is
$\operatorname{Area}(I(z)) = \frac{z(1-z^2)}{2}$.
Since
$z_\pm=r\pm\delta$
and $\delta^2=1-3r^2$, we have
$1-z_\pm^2 = 2r(r\mp\delta)$.
Therefore,
\[
\operatorname{Area}(I(z_-))
=
\operatorname{Area}(I(z_+))
=
r(r^2-\delta^2)
=
r(4r^2-1).
\]

On the other hand, Heron's formula, together with
$\delta^2+3r^2=1$, gives
$\operatorname{Area}(T_0)=r(4r^2-1)$ and similarly, $\operatorname{Area}(T_1) = \frac r3$.

A direct calculation also gives
$r = \frac32\alpha(1-\alpha^2) = \frac32\beta(1-\beta^2)$.
Therefore
$\operatorname{Area}(I(\alpha)) = \operatorname{Area}(I(\beta)) = \frac r3$.

Thus the four displayed pairs have equal perimeter and equal area.

It remains to check that they are genuine and distinct. On the interval
$\frac13\leq\lambda\leq\frac37$
one has
\[
\frac12<r<\frac1{\sqrt3},
\qquad
0<\delta<\frac12,
\qquad
0<z_-<z_+<1,
\]
and
\[
\frac37\leq \alpha\leq\frac7{13}
<
\frac8{13}\leq \beta\leq\frac57.
\]
Hence all four isosceles triangles are non-degenerate.

For $T_0$, the preceding bounds and the identity
$\delta^2+3r^2=1$ give
\[
0<2r^2-\delta
<
2-4r^2
<
2r^2+\delta
<1,
\]
since $1-(2r^2+\delta)
=\frac{(1-2\delta)(1-\delta)}{3}>0.$ Since the three sides lie in $(0,1)$ and the perimeter is 2,  $T_0$ is a non-degenerate scalene triangle. Likewise,
\[
0<
\frac{2-\delta}{3}
<
\frac23
<
\frac{2+\delta}{3}
<1.
\]
Since the perimeter is 2,  $T_1$ is also non-degenerate and scalene.

The inequalities $z_-\ne z_+$ and $\alpha\ne \beta$ show that each of
$T_0$ and $T_1$ has two distinct isosceles partners. Moreover,
$T_0$ and $T_1$ are not congruent, as  the side opposite the distinguished
angle has length
$2-4r^2>\frac23$
for $T_0$, while it has length $\frac23$ for $T_1$. Hence the four
displayed pairs are distinct and genuine.

Finally, for $\frac13\leq\lambda\leq\frac37$,
\[
r'(\lambda)
=
-\frac{
6(\lambda^2-3)(\lambda^2-6\lambda-3)
(\lambda^2+6\lambda-3)}
{(\lambda^2+3)^4}
>0.
\]Thus $r(\lambda)$ is strictly
increasing, and therefore
$\rho_4(\lambda)=\frac{1-r(\lambda)^2}{1+r(\lambda)^2}$
is strictly decreasing. Since there are infinitely many rational
$\lambda$ in the stated interval, this gives infinitely many distinct
rational values of $\rho$ carrying at least four genuine pairs.
\end{proof}
\begin{coro}\label{cor:four-pair-density}
The set of rational values $\rho$ carrying at least four distinct
genuine pairs is dense in
\[
\left[
\frac{3239209}{6414409},
\frac{85249}{150049}
\right].
\]
\end{coro}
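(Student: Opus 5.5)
The corollary should follow directly from Theorem~\ref{thm:four-pair-rational} together with the monotonicity established at the end of its proof. Every rational $\lambda\in[\frac13,\frac37]$ produces a rational value $\rho_4(\lambda)$ carrying at least four distinct genuine pairs. It therefore suffices to show two things: that $\rho_4$ maps $[\frac13,\frac37]$ homeomorphically onto the stated closed interval, and that rational points of the $\lambda$-interval map to a dense subset of the image.

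First I will evaluate the endpoints. At $\lambda=\frac13$ we have $\lambda^2+3=\frac{28}{9}$, and I will compute $r(\frac13)=\frac{6\cdot\frac13\cdot\frac89\cdot\frac{80}{9}}{(28/9)^3}$, which simplifies to $\frac{160}{343}\cdot\frac{9}{8}\cdot\ldots$. Carrying out the arithmetic should give $r(\frac13)^2$ with denominator a power of $7$, so that
\[
\rho_4\left(\tfrac13\right)=\frac{1-r^2}{1+r^2}=\frac{85249}{150049}.
\]
Here $150049=7^2\cdot 3062.2\ldots$ is not an integer quotient, so the factorization must be checked directly. The same computation at $\lambda=\frac37$ should give $\frac{3239209}{6414409}$. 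Since the proof of Theorem~\ref{thm:four-pair-rational} shows $r'(\lambda)>0$ on the interval, $\rho_4$ is strictly decreasing and continuous there. The larger value is therefore attained at $\lambda=\frac13$ and the smaller at $\lambda=\frac37$, and $\rho_4$ is a homeomorphism from $[\frac13,\frac37]$ onto $[\frac{3239209}{6414409},\frac{85249}{150049}]$.

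Next, let $U$ be any nonempty open subset of this image interval, relatively open. Its preimage under $\rho_4$ is a nonempty relatively open subset of $[\frac13,\frac37]$, hence contains a nondegenerate subinterval and so a rational $\lambda$. By Theorem~\ref{thm:four-pair-rational}, $\rho_4(\lambda)\in U$ is a rational value with at least four genuine pairs. This gives density. If one wants the endpoints themselves to be included, they are attained at the rational parameters $\lambda=\frac13,\frac37$, which the theorem already allows.

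The only real work is the endpoint arithmetic, and this is where I expect an error to be most likely. I would verify it by computing $r(\frac13)$ and $r(\frac37)$ as reduced fractions and checking that $1-r^2$ and $1+r^2$ reproduce the stated numerators and denominators, for instance $150049=(1+r^2)\cdot\mathrm{den}(r)^2$.
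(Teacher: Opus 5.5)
Your proposal is correct and follows essentially the same route as the paper: strict monotonicity of $\rho_4$ from $r'>0$, evaluation at the two endpoints, and pushing forward the density of rational $\lambda$ through the homeomorphism. The endpoint arithmetic you were unsure about works out: $r(\tfrac13)=\tfrac{180}{343}$ and $r(\tfrac37)=\tfrac{1260}{2197}$, so $\rho_4(\tfrac13)=\tfrac{343^2-180^2}{343^2+180^2}=\tfrac{85249}{150049}$ and $\rho_4(\tfrac37)=\tfrac{2197^2-1260^2}{2197^2+1260^2}=\tfrac{3239209}{6414409}$, and the stray remark about $150049/7^2$ can simply be dropped.
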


\begin{proof}
By the proof of Theorem~\ref{thm:four-pair-rational}, the function
$\rho_4(\lambda)$ is continuous and strictly decreasing on
$\left[\frac13,\frac37\right]$.
Moreover,
\[
\rho_4\left(\frac13\right)
=
\frac{85249}{150049},
\qquad
\rho_4\left(\frac37\right)
=
\frac{3239209}{6414409}.
\]
Hence $\rho_4$ maps this interval homeomorphically onto
$\left[ \frac{3239209}{6414409}, \frac{85249}{150049} \right]$.
Since $\Q\cap[\frac13,\frac37]$ is dense in
$[\frac13,\frac37]$, its image under $\rho_4$ is dense in the
displayed interval. Every such rational value of $\rho$ carries at
least four distinct genuine pairs by
Theorem~\ref{thm:four-pair-rational}.
\end{proof}

\subsection{An elliptic four-pair family}

We now give a second construction of values of $\rho$ carrying at
least four genuine pairs. In contrast with the preceding rational
family, this construction is governed by an elliptic curve.

For $X\in\Q$ and $z\in\Q$, put
$h(z)=z(1-z^2),$  $ H_X(z)=\frac{h(z)}{X}$,
and
$\Delta_X(z) = 1-(2+X^2)H_X(z)+H_X(z)^2$.
Whenever $\Delta_X(z)=d^2$ is a rational square, define
\[
T_X(z,d)
=
\left(
1-H_X(z),\,
\frac{1+H_X(z)+d}{2},\,
\frac{1+H_X(z)-d}{2}
\right).
\]

\begin{lem}\label{lem:elliptic-four-pair-construction}
Suppose that $X>1$, $0<z<1$, and
$\Delta_X(z)=d^2$ for some $d \in \Q$.
Then $T_X(z,d)$ and
\[
I(z)=
\left(
1-z^2,\,
\frac{1+z^2}{2},\,
\frac{1+z^2}{2}
\right)
\]
are non-degenerate rational triangles with the same perimeter and
area. Moreover, $T_X(z,d)$ has an angle $\theta$ satisfying
$\rho =\cos(\theta)=\frac{4-X^2}{4+X^2}$.
\end{lem}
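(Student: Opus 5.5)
The plan is to verify the three claims (equal perimeter, equal area, and the value of $\cos\theta$) by direct computation. Throughout I write $H=H_X(z)$ and denote the sides of $T_X(z,d)$ by
\[
a=1-H,\qquad b=\frac{1+H+d}{2},\qquad c=\frac{1+H-d}{2}.
\]
All the algebra will be driven by two consequences of $d^2=\Delta_X(z)=1-(2+X^2)H+H^2$:
\[
(1-H)^2-d^2=X^2H,\qquad (1+H)^2-d^2=(4+X^2)H .
\]
Rationality of the sides is immediate, since $X,z,d\in\Q$.

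\textbf{Perimeters.} We have $a+b+c=(1-H)+(1+H)=2$. The triangle $I(z)$ also has perimeter $(1-z^2)+(1+z^2)=2$.

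\textbf{Areas via Heron.} The semiperimeter is $1$, and
\[
1-a=H,\qquad (1-b)(1-c)=\frac{(1-H)^2-d^2}{4}=\frac{X^2H}{4}.
\]
So the squared area of $T_X(z,d)$ is $X^2H^2/4$. Since $XH=h(z)$, the area is $h(z)/2=z(1-z^2)/2$, which is the area of $I(z)$ recorded earlier.

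\textbf{The angle.} I take $\theta$ to be the angle between the sides $b$ and $c$. From the two identities above,
\[
2bc=\frac{(4+X^2)H}{2},\qquad b^2+c^2=\frac{(1+H)^2+d^2}{2}.
\]
Substituting $d^2$ once more gives
\[
b^2+c^2-a^2=\frac{(4-X^2)H}{2}.
\]
The law of cosines then yields $\cos\theta=\frac{4-X^2}{4+X^2}$, as claimed.

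\textbf{Non-degeneracy.} This is the only step needing care, and I expect it to be the main obstacle. For $I(z)$, the condition $0<z<1$ suffices, as noted before the lemma. For $T_X(z,d)$, I will show that the three quantities $1-a$, $1-b$, $1-c$ are all positive, which is equivalent to the triangle inequalities (and forces every side to be positive, since the perimeter is $2$).

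First, $H=h(z)/X>0$ because $0<z<1$ and $X>0$, so $1-a=H>0$.

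Next, on $(0,1)$ we have $h(z)\le 2/(3\sqrt3)<1<X$, so $0<H<1$.

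Finally, $1-b$ and $1-c$ have positive product $X^2H/4$ and positive sum $1-H$, hence both are positive. This holds for either sign of $d$, including $d=0$.

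Thus both triangles are non-degenerate, which completes the plan.
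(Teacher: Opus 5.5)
Your proof is correct and follows essentially the same route as the paper: perimeter $2$, Heron's formula with semiperimeter $1$ together with the identity $(1-H)^2-d^2=X^2H$, the law of cosines at the angle between the last two sides, and non-degeneracy deduced from $0<H<1$. Your sum-and-product argument for $1-b,\,1-c>0$ is a repackaging of the paper's bound $|d|<1-H$, and your explicit computation of $2bc$ and $b^2+c^2-a^2$ via $(1+H)^2-d^2=(4+X^2)H$ fills in the law-of-cosines step that the paper leaves to the reader.
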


\begin{proof}
Both triangles have perimeter $2$.
Since $0<z<1$ and $X>1$, we have $0<H_X(z)<1$. Moreover,
$(1-H_X(z))^2-d^2=X^2H_X(z)>0,$
and hence
$|d|<1-H_X(z).$
It follows that all three sides of $T_X(z,d)$ lie strictly between
$0$ and $1$. Since their sum is $2$, each side is strictly smaller
than the sum of the other two, and therefore $T_X(z,d)$ is
non-degenerate.

Heron's formula gives
\[
\begin{aligned}
\operatorname{Area}(T_X(z,d))^2
&=
H_X(z)\,
\frac{(1-H_X(z))^2-d^2}{4}=
\frac{X^2H_X(z)^2}{4}.
\end{aligned}
\]
Thus
\[
\operatorname{Area}(T_X(z,d))
=
\frac{XH_X(z)}{2}
=
\frac{z(1-z^2)}{2}
=
\operatorname{Area}(I(z)).
\]

Finally, applying the law of cosines to the angle between the last
two sides of $T_X(z,d)$ gives
$\cos(\theta) = \frac{4-X^2}{4+X^2}$.
\end{proof}
Therefore, to obtain four genuine pairs with the same value of $\rho$, it
suffices to find a single rational value of $X$ and four distinct
rational values $z_i$ for which $\Delta_X(z_i)$ is a rational square. To do this, we consider rational points on the following genus-one curve.
\[
\mathcal H:\qquad
V^2=9-21\lambda^2-18\lambda^3-2\lambda^4.
\]
For a rational point $(\lambda,V)\in\mathcal H(\Q)$, put
\[
X=X(\lambda)
=
\frac{4\lambda(\lambda+1)(\lambda+3)}
     {(3-\lambda)(\lambda^2+3)}
\]
and
$\rho_{\mathrm{ell}}(\lambda) = \frac{4-X(\lambda)^2}{4+X(\lambda)^2}$.
Define
\[
z_1=\frac{4\lambda}{\lambda^2+3},
\qquad
z_2=\frac{(1-\lambda)(\lambda+3)}{\lambda^2+3},
\]
and
\[
z_3=
\frac{\lambda^2+3+2V}
     {3(3-\lambda)(\lambda+1)},
\qquad
z_4=
\frac{\lambda^2+3-2V}
     {3(3-\lambda)(\lambda+1)}.
\]

A direct calculation in the function field
$\Q(\lambda,V)$
shows that
$\Delta_{X(\lambda)}(z_i)\in\Q(\lambda,V)^2,$
for $1\leq i\leq4$.
For example,
\[
\Delta_{X(\lambda)}(z_1)
=
\Delta_{X(\lambda)}(z_2)
=
\left(
\frac{\lambda(\lambda+3)(\lambda^2+6\lambda-3)}
     {(\lambda^2+3)^2}
\right)^2.
\]
The four square identities are verified in
Appendix~\ref{app:elliptic-four-pair}. Thus, whenever $(\lambda,V)\in \mathcal H(\Q)$ is such that $X(\lambda)>1$ and
$0<z_i<1$ for $1\le i\le4$, Lemma~\ref{lem:elliptic-four-pair-construction} produces four rational pairs
\[
\bigl(T_{X(\lambda)}(z_i,d_i),I(z_i)\bigr),
\qquad i=1,\ldots,4,
\]
all with the same value of $\rho$, where $d_i^2=\Delta_{X(\lambda)}(z_i)$.

\begin{thm}\label{thm:four-pair-elliptic}
There are infinitely many distinct rational values
$\rho=\rho_{\mathrm{ell}}(\lambda)$, with
$(\lambda,V)\in\mathcal H(\Q)$, for which at least four distinct
genuine pairs exist.
\end{thm}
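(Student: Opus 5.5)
The plan is to imitate the density argument of Theorem~\ref{thm:47-infinite}. I would find one rational point of $\mathcal H$ at which all the open conditions of Lemma~\ref{lem:elliptic-four-pair-construction} hold and the four pairs are genuine and distinct. Then I would show that $\mathcal H(\Q)$ is infinite and accumulates near that point.

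\textbf{Step 1: reduce $\mathcal H$ to a Weierstrass model.} The quartic has the rational points $(\lambda,V)=(0,\pm3)$. Using one of them, I would convert $\mathcal H$ to Weierstrass form, exactly as in Propositions~\ref{prop:E} and \ref{prop:E47}. I would identify the curve in the LMFDB and show that it has rank at least $1$. To do this I would exhibit a rational point, for instance the image of $(0,3)$ or of some small-height point found by search, and check that it has infinite order via Nagell--Lutz or a reduction-mod-$p$ torsion bound, as in Lemma~\ref{lem:k3-positive-rank}.

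\textbf{Step 2: find a good seed point.} I would look for a point $(\lambda_0,V_0)\in\mathcal H(\Q)$, with $\lambda_0$ presumably somewhere in $(0,1)$, satisfying the following.
\begin{itemize}
\item $X(\lambda_0)>1$.
\item $0<z_i<1$ for $i=1,\dots,4$, with the four $z_i$ pairwise distinct.
\item $d_i\neq 0$.
\item Each $T_{X}(z_i,d_i)$ is non-congruent to $I(z_i)$.
\end{itemize}
This last condition holds automatically if $T_X(z_i,d_i)$ is scalene, which I would check numerically. Distinct $z_i$ give four non-congruent isosceles triangles of perimeter $2$, since $z\mapsto I(z)$ is injective on $(0,1)$. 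The four pairs are therefore distinct. Because $\mathcal H$ only has points where $9-21\lambda^2-\dots\ge0$, the real locus is bounded in $\lambda$ (roughly $|\lambda|\lesssim 0.6$). I would first try small rational points found by search, or multiples of the generator reduced back to the quartic.

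\textbf{Step 3: propagate to infinitely many values.} All the conditions in Step~2 are strict inequalities or non-equalities among continuous functions on $\mathcal H(\R)$ near the seed. They therefore hold on a real neighbourhood of $(\lambda_0,V_0)$. If $\mathcal H(\R)$ has two components, I would argue as in Theorem~\ref{thm:infinitely-many-two-partners} using $2G$, or directly invoke the Poincaré--Hurwitz theorem \cite[Satz~11, p.~78]{Skolem}. Either way, this neighbourhood contains infinitely many rational points. Each such point yields four genuine pairs for $\rho=\rho_{\mathrm{ell}}(\lambda)$ by Lemma~\ref{lem:elliptic-four-pair-construction}.

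Finally, $\rho_{\mathrm{ell}}$ is a nonconstant rational function of $\lambda$, and $\lambda$ has degree $2$ on $\mathcal H$. Each value of $\rho$ therefore arises from finitely many points, so infinitely many distinct $\rho$ occur.

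\textbf{Main obstacle.} The main obstacle is Step~2: producing an explicit rational seed point where all four $z_i$ land in $(0,1)$ and $X>1$ simultaneously. The point $\lambda=0$ is degenerate, since there $X=0$. Points close to it fail $X>1$, because $X$ is small for small $\lambda$. The admissible region may be a narrow arc, for example near the end of the real locus, where $V$ is small and $z_3\approx z_4$. If no small seed works, I would instead take any non-torsion point and use density to approach a real (non-rational) point of the admissible arc. For this route it suffices to show that the admissible arc of $\mathcal H(\R)$ is nonempty and lies in a component meeting the closure of $\mathcal H(\Q)$, which can be checked numerically.
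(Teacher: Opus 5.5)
Your plan is the paper's argument. The paper puts $\mathcal H$ in Weierstrass form $E_4\colon y^2=x^3-75x+1222$ (LMFDB \texttt{207.a2}, rank $1$, with $E_4(\R)$ connected, so your two-component contingency never arises), takes the seed $6P$ with $P=(-1,36)$, i.e.\ $\lambda_0=\frac{4455}{8467}$, checks $X(\lambda_0)>1$ and $0<z_4<z_3<z_2<z_1<1$ there, and then concludes by openness, Poincar\'e--Hurwitz density and finiteness of the fibres of $\lambda$, exactly as you propose. The only piece you leave undone is exhibiting that seed, which is indeed the crux; in passing, the real locus of $\mathcal H$ is not confined to $|\lambda|\lesssim 0.6$ (e.g.\ $(-1,\pm2)\in\mathcal H(\Q)$), but this is harmless because $z_1>0$ already forces $\lambda>0$.
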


\begin{proof}
The curve $\mathcal H$ is birational over $\Q$ to
\[
E_4:\qquad
y^2=x^3-75x+1222
=(x+13)(x^2-13x+94).
\]
Indeed, the birational map is given by
\[
x=\frac{6(V+3)}{\lambda^2}-7,
\qquad
y=\frac{6(V+3)^2}{\lambda^3}+12\lambda+54,
\]
with inverse
\[
\lambda=
\frac{6(y-54)}{(x+7)^2+72},
\qquad
V=\frac{x+7}{6}\lambda^2-3.
\]
The change of variables
\[
X=\frac{x+1}{4},
\qquad
Y=\frac{y-x-5}{8},
\]
with inverse
\[
x=4X-1,
\qquad
y=8Y+4X+4,
\]
takes $E_4$ to the global minimal model
\[
E_4^{\min}:\qquad
Y^2+XY+Y=X^3-X^2-5X+20.
\]
This is the elliptic curve with LMFDB label
\texttt{207.a2}~\cite{lmfdb}. It has Mordell--Weil rank $1$ and
torsion subgroup $\Z/2\Z$, and the point $(0,4)$ generates its free
part. Under the above isomorphism, $(0,4)$ corresponds to
\[
P=(-1,36)\in E_4(\Q).
\]
Thus $P$ has infinite order.

Moreover, the quadratic factor
$x^2-13x+94$
has discriminant $-207<0$. Hence the cubic defining $E_4$ has only
one real root, so $E_4(\R)$ is connected. Since $P$ has infinite
order, $E_4(\Q)$ is therefore dense in $E_4(\R)$ by the
Poincar\'e--Hurwitz theorem.

The point
\[
6P=
\left(
\frac{1310611}{18225},
\frac{1491973984}{2460375}
\right)
\]
corresponds under the inverse birational map to
$(\lambda_0,V_0)=\left(\frac{4455}{8467}, \frac{45960492}{71690089}\right)$.
At this point,
$X(\lambda_0)>1$
and
$0<z_4<z_3<z_2<z_1<1$,
where
\[
z_1=\frac{12573495}{19576441},
\qquad
z_2=\frac{9981856}{19576441},
\qquad
z_3=\frac{127871}{317681},
\qquad
z_4=\frac{130949}{743583}.
\]
A direct substitution shows that the four pairs supplied by
Lemma~\ref{lem:elliptic-four-pair-construction} are genuine.
In particular, they are non-congruent, and they are distinct since
the four parameters $z_i$ are distinct.

All these conditions are open. Hence there is a sufficiently small
real neighbourhood of $6P$ in which every rational point gives four
distinct genuine pairs. Since $E_4(\Q)$ is dense in $E_4(\R)$, this
neighbourhood contains infinitely many rational points.

Finally, the projection
$(\lambda,V)\longmapsto\lambda$
has finite fibres on $\mathcal H$, while
$\rho_{\mathrm{ell}}(\lambda)$ is a nonconstant rational function.
Thus infinitely many of these points give distinct rational values of
$\rho$. This proves the result.
\end{proof}

\subsection{Examples with higher multiplicity}

The preceding constructions show that values of $\rho$ carrying four
genuine pairs occur in infinite families. Computational exploration of the
complete parametrization reveals still higher multiplicities. We do
not currently know an infinite construction producing five or more
genuine pairs.

\begin{prop}\label{prop:high-multiplicity}
The values of $\rho$ listed in
Tables~\ref{tab:high-multiplicity-disjoint} and
\ref{tab:high-multiplicity-shared} carry at least the indicated
number of distinct genuine pairs. In
Table~\ref{tab:high-multiplicity-disjoint}, the corresponding
$\theta$-triangles are pairwise non-congruent, while in
Table~\ref{tab:high-multiplicity-shared} one $\theta$-triangle is
shared by two distinct isosceles partners.
\end{prop}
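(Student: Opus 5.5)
This is a finite verification, so the plan is to check each entry of Tables~\ref{tab:high-multiplicity-disjoint} and \ref{tab:high-multiplicity-shared} directly. For each listed value of $\rho$ the tables give a finite list of pairs $(T,I)$: $T$ is a rational $\theta$-triangle and $I$ an isosceles triangle, at a common integral scale. For each pair I will check four things.
\begin{itemize}
\item[(i)] The triangles are non-degenerate: all sides are positive and the strict triangle inequalities hold.
\item[(ii)] $T$ has an angle with cosine $\rho$. Writing $T=(a,b,c)$ with $c$ opposite $\theta$, the law of cosines gives $(a^2+b^2-c^2)/(2ab)=\rho$.
\item[(iii)] The two triangles have equal perimeter.
\item[(iv)] The two triangles have equal area. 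To stay in exact arithmetic I will compare $16\cdot\operatorname{Area}^2$ computed by Heron's formula, which is a polynomial in the integer sides.
\end{itemize}
These are exact integer computations. Equivalently, one can recover the parameters $u$, $q$, $k$ and check that they satisfy \eqref{eq:system} with $-\sqrt{1-\rho^2}<u<0$, $|q|<1$ and $q<\rho$, as in Sections~\ref{sec:general-case}--\ref{sec:two-geometries}. Either route fits naturally into the \textsc{Magma} appendix.

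Next I must show that each pair is genuine, meaning the two triangles are not congruent. This is automatic whenever $T$ is scalene, since $I$ is isosceles. If some $T$ happens to be isosceles, I will compare the multisets of side lengths directly, with both triangles normalized to the same perimeter.

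Then I must show the listed pairs are distinct, up to homothety, for a fixed $\rho$. Scaling each pair to a common perimeter, two pairs coincide only if both their isosceles triangles and their $\theta$-triangles are congruent.
\begin{itemize}
\item For Table~\ref{tab:high-multiplicity-disjoint}, I will check that the $\theta$-triangles are pairwise non-congruent, by comparing sorted side lists after normalizing the perimeter. This gives both the distinctness of the pairs and the disjointness claim.
\item For Table~\ref{tab:high-multiplicity-shared}, one $\theta$-triangle is repeated. There I will check that its two isosceles partners have distinct $u$-values; equivalently, that they are the two negative roots of $\Phi_q$ described in Proposition~\ref{prop:several-isosceles-partners}. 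All other $\theta$-triangles I will check to be pairwise non-congruent, as before.
\end{itemize}
Since $N(\rho)$ counts distinct genuine pairs, the number of rows verified this way is the stated lower bound.

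The main obstacle is purely bookkeeping: making sure that the orientation of $T$ is correct, with the side opposite $\theta$ identified, and that sides stated at a common scale really share a perimeter. I would handle this by having the code try all three choices of opposite side and record which one yields $\rho$. A second subtlety is the coincidence configuration $q=2\rho-1$, which must not be counted. It is excluded automatically because it gives congruent triangles, and the congruence check above catches it. No exhaustiveness is claimed, so no Chabauty argument is needed.
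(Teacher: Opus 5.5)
Your proposal is correct and takes essentially the same route as the paper, whose proof is also a finite exact verification (Appendix~\ref{app:high-multiplicity}) of non-degeneracy, the angle $\theta$, equal perimeter and area, non-congruence, and distinctness of the pairs and of the $\theta$-triangles. The only difference is the input: the paper starts from the listed $u$-values and rebuilds $\iota(u)$ and $k\tau(q)$ from the quadratic in $q$, checking that both roots give the same $\theta$-triangle, whereas you check the displayed integer triples directly with Heron's formula. Your perimeter normalization is genuinely needed, since for $\rho=\frac{2619}{5069}$ and $\rho=\frac{2341}{4141}$ the shared $\theta$-triangle is displayed at two different scales.
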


\begin{proof}
The examples were found by searching for collisions in the complete
parametrization of Theorem~\ref{thm:param}. The computations in
Appendix~\ref{app:high-multiplicity} verify, using exact rational
arithmetic, that all the pairs displayed in
Tables~\ref{tab:high-multiplicity-disjoint} and
\ref{tab:high-multiplicity-shared} are distinct and genuine.
\end{proof}

\begin{table}[H]
\centering
\small
\begin{tabular}{>{$}c<{$}c l}
\toprule
\rho & \text{pairs found} & \text{genuine pairs} \\
\midrule
\frac{1952}{3977} & 6 &
\begin{tabular}[t]{@{}l@{}}
$\bigl((12064,86193,91471),(10764,89482,89482)\bigr)$\\
$\bigl((3977,4215,4352),(3960,4292,4292)\bigr)$\\
$\bigl((1261,1299,1312),(1260,1306,1306)\bigr)$\\
$\bigl((970,984,986),(975,975,990)\bigr)$\\
$\bigl((7178,8010,8528),(7529,7529,8658)\bigr)$\\
$\bigl((14145,21089,24056),(17225,17225,24840)\bigr)$
\end{tabular}
\\
\midrule
\frac{613}{1495} & 5 &
\begin{tabular}[t]{@{}l@{}}
$\bigl((1058,1533,1625),(1050,1583,1583)\bigr)$\\
$\bigl((525,598,613),(553,553,630)\bigr)$\\
$\bigl((598,750,760),(655,655,798)\bigr)$\\
$\bigl((299,389,400),(334,334,420)\bigr)$\\
$\bigl((299,805,880),(530,530,924)\bigr)$
\end{tabular}
\\
\midrule
\frac{817}{1969} & 5 &
\begin{tabular}[t]{@{}l@{}}
$\bigl((1268608,1406403,1451453),(1322928,1322928,1480608)\bigr)$\\
$\bigl((1634,1969,1969),(1754,1754,2064)\bigr)$\\
$\bigl((1122,2123,2327),(1562,1562,2448)\bigr)$\\
$\bigl((57101,138774,152375),(94085,94085,160080)\bigr)$\\
$\bigl((1029,1804,1969),(1008,1897,1897)\bigr)$
\end{tabular}
\\
\midrule
\frac{7671}{16121} & 5 &
\begin{tabular}[t]{@{}l@{}}
$\bigl((4823,73826,75999),(4290,75179,75179)\bigr)$\\
$\bigl((3773,4034,4089),(3770,4063,4063)\bigr)$\\
$\bigl((1960,1974,2014),(1950,1999,1999)\bigr)$\\
$\bigl((123850881,329543125,369951946),
        (219139063,219139063,385067826)\bigr)$\\
$\bigl((31311875,32603571,32739554),
        (31771675,31771675,33111650)\bigr)$
\end{tabular}
\\
\midrule
\frac{302}{527} & 5 &
\begin{tabular}[t]{@{}l@{}}
$\bigl((2108,2787,3395),(1940,3175,3175)\bigr)$\\
$\bigl((578,1250,1488),(893,893,1530)\bigr)$\\
$\bigl((8385,8959,9184),(8364,9082,9082)\bigr)$\\
$\bigl((8160,8479,9889),(8314,8314,9900)\bigr)$\\
$\bigl((855117,871133,1007500),(807300,963225,963225)\bigr)$
\end{tabular}
\\
\bottomrule
\end{tabular}
\caption{Values of $\rho$ for which at least five distinct genuine pairs
have been found and the corresponding $\theta$-triangles are pairwise
non-congruent. In each ordered pair, the first triple is the
$\theta$-triangle and the second is the isosceles triangle, both given
at a common integral scale.}
\label{tab:high-multiplicity-disjoint}
\end{table}
\begin{table}[H]
\centering
\small
\begin{tabular}{>{$}c<{$}c l}
\toprule
\rho & \text{pairs found} & \text{genuine pairs} \\
\midrule
\frac{2619}{5069} & 6 &
\begin{tabular}[t]{@{}l@{}}
$\bigl((822,1394,1628),(770,1537,1537)\bigr)$\\
$\bigl((411,697,814),(541,541,840)\bigr)$\\
$\bigl((555,4397,4658),(2425,2425,4760)\bigr)$\\
$\bigl((2270333,2297216,2320899),
        (2266824,2310812,2310812)\bigr)$\\
$\bigl((2322,24227,25345),(13047,13047,25800)\bigr)$\\
$\bigl((1378768,1532129,1689039),
        (1346268,1626834,1626834)\bigr)$
\end{tabular}
\\
\midrule
-\frac{327}{1241} & 5 &
\begin{tabular}[t]{@{}l@{}}
$\bigl((612,1387,1657),(532,1562,1562)\bigr)$\\
$\bigl((219,289,406),(168,373,373)\bigr)$\\
$\bigl((219,289,406),(253,253,408)\bigr)$\\
$\bigl((4891,8721,11066),(6711,6711,11256)\bigr)$\\
$\bigl((73,408,433),(233,233,448)\bigr)$
\end{tabular}
\\
\midrule
\frac{2341}{4141} & 5 &
\begin{tabular}[t]{@{}l@{}}
$\bigl((1681,2175,2626),(1560,2461,2461)\bigr)$\\
$\bigl((3362,4350,5252),(3817,3817,5330)\bigr)$\\
$\bigl((295,303,328),(288,319,319)\bigr)$\\
$\bigl((1066,2487,2929),(1733,1733,3016)\bigr)$\\
$\bigl((1704375,1963667,2334458),
        (1824725,1824725,2353050)\bigr)$
\end{tabular}
\\
\midrule
\frac{171}{221} & 5 &
\begin{tabular}[t]{@{}l@{}}
$\bigl((25,34,39),(24,37,37)\bigr)$\\
$\bigl((25,34,39),(29,29,40)\bigr)$\\
$\bigl((173,195,272),(140,250,250)\bigr)$\\
$\bigl((43289,59895,67184),(50534,50534,69300)\bigr)$\\
$\bigl((701250,816877,1228123),
        (758225,758225,1229800)\bigr)$
\end{tabular}
\\
\bottomrule
\end{tabular}
\caption{Values of $\rho$ for which at least five distinct genuine pairs
have been found and one $\theta$-triangle is shared by two distinct
isosceles partners. In each ordered pair, the first triple is the
$\theta$-triangle and the second is the isosceles triangle, both given
at a common integral scale.}
\label{tab:high-multiplicity-shared}
\end{table}

We emphasize that the numbers in
Tables~\ref{tab:high-multiplicity-disjoint} and
\ref{tab:high-multiplicity-shared} are lower bounds: we do not claim
that the displayed lists exhaust all genuine pairs for these values
of $\rho$.

\begin{rem}
The six-pair example $\rho=\frac{1952}{3977}$ has the additional
property that
\[
\sqrt{1-\rho^2}
=
\frac{3465}{3977}.
\]
Thus $\sin\theta$ is rational.
Since the corresponding parameters $u$ are rational, the common
area of the two triangles in each of the six displayed pairs,
$
-2u(1-\rho^2-u^2)\sqrt{1-\rho^2},
$
is itself rational.
\end{rem}

\appendix
\section{Computer code}\label{app:code}

All computations below were performed using the  \textsc{Magma} \cite{Magma} online calculator, version V2.29-10. Canonical heights and height pairings used to prove independence were computed with precision $100$.

\subsection{The supplementary angle $\rho=-1/2$}
\label{app:rho-minus-half}

The following \textsc{Magma} code verifies the rank and the complete
set of rational points used in Remark~\ref{rem:120}.

\begin{verbatim}
////////////////////////////////////////////////////////////////////////
// The supplementary angle rho = -1/2
////////////////////////////////////////////////////////////////////////

Q := Rationals();
Qx<x> := PolynomialRing(Q);

rho := -Q!1/2;

Qrho :=
    x^2 - 2*(1-rho)*x + rho^2 - 1;

Srho :=
    x^4
    + 2*(1-rho)*x^3
    + (1-rho)*(3-5*rho)*x^2
    - 4*(1-rho)^2*(1+rho)*x
    - 4*(1-rho)^3*(1+rho);

f := Qrho*Srho;

assert
    f eq
    (4*x^2-12*x-3)
    *(4*x^4+12*x^3+33*x^2-18*x-27)/16;

assert IsIrreducible(Qrho);
assert IsIrreducible(Srho);
assert Discriminant(f) ne 0;


//------------------------------------------------------------
// Integral model obtained by scaling y by 8.
//------------------------------------------------------------

c := 8;
fint := c^2*f;

fint := Qx![
    Integers()!Coefficient(fint,i)
    : i in [0..Degree(fint)]
];

C := HyperellipticCurve(fint);
J := Jacobian(C);


//------------------------------------------------------------
// Rank = 1.
//------------------------------------------------------------

rb := RankBound(J);
assert rb eq 1;

uc := rho - 1;
yc := 2*(rho-1)^2*(2*rho-1);
Yc := c*yc;

assert uc eq -Q!3/2;
assert yc eq -Q!9;
assert Evaluate(fint,uc) eq Yc^2;

Pc := C![uc,Yc,1];
Pinf := C![1,c,0];

D := J![Pinf,Pc];

h := CanonicalHeight(D);
assert h gt 0;


//------------------------------------------------------------
// Chabauty.
//------------------------------------------------------------

allpts := Chabauty(D : ptC := Pc);

assert #allpts eq 8;

num_inf :=
    #[P : P in allpts | P[3] eq 0];

us := {
    P[1]/P[3]
    : P in allpts | P[3] ne 0
};

assert num_inf eq 2;
assert us eq { Q!0, -Q!3/2, -Q!7/4 };

print "rho =", rho;
print "RankBound =", rb;
print "canonical height =", h;
print "C_rho(Q) =", allpts;
print "#C_rho(Q) =", #allpts;
\end{verbatim}

\subsection{Generic rank  for the apex family}
\label{app:generic-rank-apex}

The following \textsc{Magma} code verifies the computations used in
Proposition~\ref{prop:generic-rank}.

\begin{verbatim}
////////////////////////////////////////////////////////////////////////
// Apex family: generic rank = 1
////////////////////////////////////////////////////////////////////////

Q := Rationals();

R<n> := PolynomialRing(Q);
K := FieldOfFractions(R);
RX<X> := PolynomialRing(R);

A  := n^2 - n + 1;
D  := n^2 - n + 2;
C0 := 3*n^2 - 3*n - 1;
B  := 3*n^2 - 3*n + 2;
RR := n^4 - 6*n^3 + 13*n^2 - 13*n + 7;


//------------------------------------------------------------
// 1. Generic apex model
//------------------------------------------------------------

G1 := (X-n)*(X+1);
G2 := (X+n-1)*(X-A);
G3 := X^2 + A*X + n*(n-1)*A;

f := G1*G2*G3;


// Verify the change of variables from F_rho(u).

KU<u> := PolynomialRing(K);
KX<XX> := PolynomialRing(K);

DK := K!D;

s := 2*n*(n-1)/DK;
rho := 1 - s^2/2;

Qrho :=
    u^2 - 2*(1-rho)*u + rho^2 - 1;

Srho :=
    u^4
    + 2*(1-rho)*u^3
    + (1-rho)*(3-5*rho)*u^2
    - 4*(1-rho)^2*(1+rho)*u
    - 4*(1-rho)^3*(1+rho);

Frho := Qrho*Srho;

uX := 4*n*(n-1)/DK^2 * XX;

transformed :=
    DK^12/(4096*n^6*(n-1)^6)
    * Evaluate(Frho,uX);

assert transformed eq KX!f;


//------------------------------------------------------------
// 2. Richelot determinant and discriminants
//------------------------------------------------------------

M := Matrix(R,3,3,[
    Coefficient(G1,0), Coefficient(G1,1), Coefficient(G1,2),
    Coefficient(G2,0), Coefficient(G2,1), Coefficient(G2,2),
    Coefficient(G3,0), Coefficient(G3,1), Coefficient(G3,2)
]);

Delta := Determinant(M);

assert Delta eq n^2*RR;

assert IsIrreducible(A);
assert IsIrreducible(D);
assert IsIrreducible(C0);
assert IsIrreducible(B);
assert IsIrreducible(RR);


disc :=
    -n^12
    *(n-1)^12
    *(n-2)^2
    *(n+1)^2
    *(2*n-1)^2
    *A^3
    *D^6
    *C0
    *B^2;

assert Discriminant(f) eq disc;


// Richelot dual.

H1 := Derivative(G2)*G3 - G2*Derivative(G3);
H2 := Derivative(G3)*G1 - G3*Derivative(G1);
H3 := Derivative(G1)*G2 - G1*Derivative(G2);

fdual := H1*H2*H3;

discdual :=
    2^6
    *n^36
    *(n-1)^6
    *(n-2)
    *(n+1)^4
    *(2*n-1)
    *A^3
    *D^3
    *C0^2
    *B
    *RR^12;

assert Discriminant(fdual) eq discdual;


//------------------------------------------------------------
// 3. Square-class specialization at n = 12
//------------------------------------------------------------

n0 := 12;

assert Evaluate(Delta,n0) ne 0;
assert Evaluate(disc,n0) ne 0;
assert Evaluate(discdual,n0) ne 0;

gens := [
    R!(-1), R!2,
    n, n-1, n+1, n-2, 2*n-1,
    A, D, C0, B, RR
];

vals := [
    Integers()!Evaluate(g,n0) : g in gens
];

assert vals eq [
    -1, 2, 12, 11, 13, 10, 23,
    133, 134, 395, 398, 12091
];


// Matrix of sign and prime valuations modulo 2.

primes := [
    2,3,5,7,11,13,19,23,67,79,107,113,199
];

F2 := GF(2);

rows := [];

for v in vals do
    row :=
        [(v lt 0) select F2!1 else F2!0]
        cat
        [F2!(Valuation(Abs(v),p) mod 2) : p in primes];

    Append(~rows,row);
end for;

MM := Matrix(F2,#rows,#rows[1],&cat rows);

assert Rank(MM) eq 12;


// Check the corresponding apex parameter.

D0 := n0^2 - n0 + 2;
s0 := Q!(2*n0*(n0-1))/Q!D0;
rho0 := 1 - s0^2/2;

assert rho0 eq -Q!4223/4489;


//------------------------------------------------------------
// 4. Rank of the specialization n = 12
//------------------------------------------------------------

Qx<x> := PolynomialRing(Q);

A0 := n0^2 - n0 + 1;

f12 :=
    (x-n0)
    *(x+1)
    *(x+n0-1)
    *(x-A0)
    *(x^2 + A0*x + n0*(n0-1)*A0);

C12 := HyperellipticCurve(f12);
J12 := Jacobian(C12);

r := RankBound(J12);

assert r eq 1;


// Coincidence divisor class.

P := C12![-66,853710,1];
Pinf := C12![1,1,0];

Dc := J12![P,Pinf];

h := CanonicalHeight(Dc);

assert h gt 0;


//------------------------------------------------------------
// Output
//------------------------------------------------------------

print "Apex specialization:";
print "rho =", rho0;
print "square-class rank =", Rank(MM);
print "RankBound =", r;
print "canonical height =", h;
print "Hence rank J^A(Q(t)) = 1.";
\end{verbatim}

\subsection{Generic rank lower bound for the base family}\label{app:generic-rank-base}

The following \textsc{Magma} code verifies the computations used in
Proposition~\ref{prop:base-generic-rank}.

\begin{verbatim}
////////////////////////////////////////////////////////////////////////
// Base family: generic rank >= 2
//
// We specialize the coincidence section and the genuine base section
// at m = 7/3 and verify that their Neron--Tate height matrix has
// positive determinant.
////////////////////////////////////////////////////////////////////////

Q := Rationals();
Qx<x> := PolynomialRing(Q);

m := Q!7/3;
rho := (6 - 2*m)/(m^2 + 7);

assert rho eq Q!3/28;


//------------------------------------------------------------
// Curve C_rho : y^2 = F_rho(x)
//------------------------------------------------------------

Qrho :=
    x^2 - 2*(1-rho)*x + rho^2 - 1;

Srho :=
    x^4
    + 2*(1-rho)*x^3
    + (1-rho)*(3-5*rho)*x^2
    - 4*(1-rho)^2*(1+rho)*x
    - 4*(1-rho)^3*(1+rho);

f := Qrho*Srho;

assert Discriminant(f) ne 0;


//------------------------------------------------------------
// Integral model: Y = 10976 y
//------------------------------------------------------------

c := 10976;

fint := c^2*f;

fint := Qx![
    Integers()!Coefficient(fint,i)
    : i in [0..Degree(fint)]
];

C := HyperellipticCurve(fint);
J := Jacobian(C);


//------------------------------------------------------------
// Coincidence point
//------------------------------------------------------------

uc := rho - 1;
yc := 2*(rho-1)^2*(2*rho-1);
Yc := c*yc;

assert uc eq -Q!25/28;
assert yc eq -Q!6875/5488;
assert Yc eq -13750;
assert Evaluate(fint,uc) eq Yc^2;

Pc := C![uc,Yc,1];


//------------------------------------------------------------
// Genuine base point
//------------------------------------------------------------

uB := (m+1)*(m-3)/(m^2+7);

yB :=
    2*(m-1)*(m+1)^4*(m+5)/(m^2+7)^3;

YB := c*yB;

assert uB eq -Q!5/28;
assert yB eq Q!6875/5488;
assert YB eq 13750;
assert Evaluate(fint,uB) eq YB^2;

PB := C![uB,YB,1];


//------------------------------------------------------------
// Jacobian classes
//------------------------------------------------------------

Pinf := C![1,c,0];

Dc := J![Pc,Pinf];
DB := J![PB,Pinf];


//------------------------------------------------------------
// Neron--Tate height matrix
//------------------------------------------------------------


prec := 100;

hc := CanonicalHeight(Dc : Precision := prec);
hB := CanonicalHeight(DB : Precision := prec);
hp := HeightPairing(Dc,DB : Precision := prec);

detH := hc*hB - hp^2;


assert detH gt 0;

print "Base specialization m = 7/3:";
print "rho =", rho;
print "height coincidence class =", hc;
print "height base class        =", hB;
print "height pairing           =", hp;
print "height determinant       =", detH;
print "Hence rank J^B(Q(m)) >= 2.";

\end{verbatim}

\subsection{The square-condition elliptic surface}
\label{app:square-condition}

The following \textsc{Magma} code verifies the fibre types of the
elliptic surface in Proposition~\ref{prop:square-surface-k3} using
Tate's algorithm, as well as the specialization calculation used in
Lemma~\ref{lem:k3-positive-rank}.

\begin{verbatim}
////////////////////////////////////////////////////////////////////////
// The square-condition elliptic surface
////////////////////////////////////////////////////////////////////////

Q := Rationals();
K<t> := FunctionField(Q);

c := 2*t*(t^3 - 2*t^2 - 3*t - 2);
d := t^3*(t-4)*(t-1)^2*(t+1)^2;

// y^2 = (x-c)(x^2-4d)
//     = x^3 - c*x^2 - 4*d*x + 4*c*d

E := EllipticCurve([
    K!0,
    -c,
    K!0,
    -4*d,
    4*c*d
]);

////////////////////////////////////////////////////////////////////////
// 1. Discriminant
////////////////////////////////////////////////////////////////////////

Delta :=
    2^16*t^7*(t-4)*(t-1)^2*(t+1)^2*(2*t+1)^4;

assert Discriminant(E) eq Delta;

////////////////////////////////////////////////////////////////////////
// 2. Tate's algorithm at the bad fibres
////////////////////////////////////////////////////////////////////////

// LocalInformation returns
// <place, valuation of minimal discriminant, conductor exponent,
//  Tamagawa number, Kodaira symbol, split>.

places := [
    t,
    t-4,
    t-1,
    t+1,
    t+1/2,
    1/t
];

for p in places do
    info, Emin := LocalInformation(E,p);
    print "place =", p;
    print "local information =", info;
end for;

// The output gives:
//
// t = 0       : I_1^*
// t = 4       : I_1
// t = 1       : I_2
// t = -1      : I_2
// t = -1/2    : I_4
// t = infinity: I_8

////////////////////////////////////////////////////////////////////////
// 3. The sections P and P_4
////////////////////////////////////////////////////////////////////////

P := E![
    2*t^2*(t+1)*(t+5),
    8*t^2*(t+1)*(2*t+1)^2,
    1
];

P4 := E![
    2*t^2*(t-1)^2,
    8*t^2*(t-1)*(2*t+1),
    1
];

T2 := E![c,0,1];

assert 2*P4 eq T2;
assert 4*P4 eq E!0;

////////////////////////////////////////////////////////////////////////
// 4. Specialization at t = 2
////////////////////////////////////////////////////////////////////////

E2 := EllipticCurve([
    Q!0,
    Q!32,
    Q!0,
    Q!576,
    Q!18432
]);

P2 := E2![168,2400,1];

assert 2*P2 eq E2![32,320,1];

// Good reduction at 7 and 13.

ZZ := Integers();
disc2 := ZZ!Discriminant(E2);

assert GCD(disc2,7) eq 1;
assert GCD(disc2,13) eq 1;

// Count points over the two residue fields.

F7 := GF(7);
E7 := EllipticCurve([
    F7!0, F7!32, F7!0, F7!576, F7!18432
]);

F13 := GF(13);
E13 := EllipticCurve([
    F13!0, F13!32, F13!0, F13!576, F13!18432
]);

assert #E7 eq 12;
assert #E13 eq 16;

print "#E_2(F_7)  =", #E7;
print "#E_2(F_13) =", #E13;
print "2P(2)      =", 2*P2;
\end{verbatim}

\subsection{A $2$-descent census in the apex family}
\label{app:apex-rank-census}

The following \textsc{Magma} code computes the $2$-descent rank bound for every reduced rational
parameter considered in equation~\eqref{eq:198}.
The rank computation is the expensive part of the search. The code
below is written so that the denominator range may be divided into
smaller batches when using the online \textsc{Magma} calculator.
For the largest denominators it may also be necessary to split the
inner numerator range into smaller pieces.

\begin{verbatim}
////////////////////////////////////////////////////////////////////////
// 2-descent census in the apex family
////////////////////////////////////////////////////////////////////////

Q := Rationals();
Qx<x> := PolynomialRing(Q);


////////////////////////////////////////////////////////////////////////
// Search range
//
// For the full search use Dmin = 2 and Dmax = 25.
// On the online calculator this may be run in smaller batches.
////////////////////////////////////////////////////////////////////////

Dmin := 2;
Dmax := 7;


////////////////////////////////////////////////////////////////////////
// Counters
////////////////////////////////////////////////////////////////////////

tested := 0;

counts := [0 : r in [1..5]];

bound1 := [* *];
bound4 := [* *];
bound5 := [* *];


////////////////////////////////////////////////////////////////////////
// Search
////////////////////////////////////////////////////////////////////////

for den in [Dmin..Dmax] do

    print "============================================================";
    print "denominator =", den;
    print "============================================================";

    for num in [1..den-1] do

        if GCD(num,den) eq 1 then

            t := -(Q!num)/den;

            // Exclude the equilateral coincidence.
            if t ne -Q!1/2 then

                //----------------------------------------------------
                // Apex parametrization.
                //----------------------------------------------------

                s := 2*(t+1)/(2*t^2+t+1);
                rho := 1 - s^2/2;

                //----------------------------------------------------
                // Curve C_rho : y^2 = F_rho(x).
                //----------------------------------------------------

                Qrho :=
                    x^2
                    - 2*(1-rho)*x
                    + rho^2 - 1;

                Srho :=
                    x^4
                    + 2*(1-rho)*x^3
                    + (1-rho)*(3-5*rho)*x^2
                    - 4*(1-rho)^2*(1+rho)*x
                    - 4*(1-rho)^3*(1+rho);

                f := Qrho*Srho;

                assert Discriminant(f) ne 0;

                //----------------------------------------------------
                // Integral model obtained by scaling y.
                //----------------------------------------------------

                drho := Denominator(rho);
                c := drho^3;

                fint := c^2*f;

                fint := Qx![
                    Integers()!Coefficient(fint,i)
                    : i in [0..Degree(fint)]
                ];

                C := HyperellipticCurve(fint);
                J := Jacobian(C);

                //----------------------------------------------------
                // 2-descent rank upper bound.
                //----------------------------------------------------

                rb := RankBound(J);

                assert 1 le rb and rb le 5;

                tested +:= 1;
                counts[rb] +:= 1;

                if rb eq 1 then
                    Append(~bound1,<t,rho>);
                elif rb eq 4 then
                    Append(~bound4,<t,rho>);
                elif rb eq 5 then
                    Append(~bound5,<t,rho>);
                end if;

                print "";
                print "t =", t;
                print "rho =", rho;
                print "RankBound =", rb;

            end if;

        end if;

    end for;

end for;


////////////////////////////////////////////////////////////////////////
// Summary for this batch
////////////////////////////////////////////////////////////////////////

print "";
print "============================================================";
print "SEARCH SUMMARY";
print "============================================================";

print "denominator range =", Dmin, "..", Dmax;
print "number of t-values tested =", tested;

for r in [1..5] do
    print "RankBound =", r, ":", counts[r];
end for;

assert &+counts eq tested;

print "";
print "RANK-BOUND-1 SPECIALIZATIONS:";

for z in bound1 do
    print "t =", z[1], "rho =", z[2];
end for;

print "";
print "RANK-BOUND-4 SPECIALIZATIONS:";

for z in bound4 do
    print "t =", z[1], "rho =", z[2];
end for;

print "";
print "RANK-BOUND-5 SPECIALIZATIONS:";

for z in bound5 do
    print "t =", z[1], "rho =", z[2];
end for;

////////////////////////////////////////////////////////////////////////
// If the complete range 2,...,25 is run in a single computation,
// these are the resulting totals.
////////////////////////////////////////////////////////////////////////

if Dmin eq 2 and Dmax eq 25 then
    assert tested eq 198;
    assert counts eq [35,91,48,18,6];
end if;

print "============================================================";
\end{verbatim}

\subsection{Verification of the rank-one apex specializations}
\label{app:rank-one-verification}

The following \textsc{Magma} code verifies the computations used in
Theorem~\ref{thm:rank-one-apex}. For each of the thirty-five
specializations, the computation verifies that the $2$-descent rank
bound is $1$, that the coincidence class has positive canonical
height, that $C_\rho(\Q)$ has exactly ten rational points, and that
exactly one of the four rational Weierstrass points gives a genuine
pair.

\begin{verbatim}
//////////////////////////////////////////////////////////////////////
// Thirty-five rank-one apex specializations
//////////////////////////////////////////////////////////////////////

Q := Rationals();
Qx<x> := PolynomialRing(Q);


//------------------------------------------------------------
// Test whether a Weierstrass point gives a genuine pair.
//
// At a Weierstrass point the discriminant of the quadratic
// equation in k vanishes, so k is its unique double root.
//------------------------------------------------------------

function IsGenuineWeierstrass(rho,u)

    A :=
        -2*rho^2 + 4*rho - 2;

    B :=
        -2*rho^3
        + (-u+10)*rho^2
        - 14*rho
        + (-u^3+u+6);

    if A eq 0 then
        return false;
    end if;

    k := -B/(2*A);

    if k le 0 then
        return false;
    end if;

    q := 1 - 2*(1-rho)/k;

    if not (-1 lt q and q lt 1 and q lt rho) then
        return false;
    end if;

    if not (u lt 0 and u^2 lt 1-rho^2) then
        return false;
    end if;

    T := [
        k*2*(rho-q),
        k*(1-q^2),
        k*(q^2-2*rho*q+1)
    ];

    I := [
        2*(1-rho^2-u^2),
        u^2-rho^2+1,
        u^2-rho^2+1
    ];

    Sort(~T);
    Sort(~I);

    return T ne I;

end function;


//------------------------------------------------------------
// <apex parameter t, rho>, ordered by increasing rho
//------------------------------------------------------------

data := [
    < -Q!1/18,  -Q!11551/11858 >,
    < -Q!1/16,  -Q!14159/14641 >,
    < -Q!1/12,  -Q!4223/4489 >,
    < -Q!2/21,  -Q!152753/165649 >,
    < -Q!1/8,   -Q!727/841 >,
    < -Q!1/7,   -Q!199/242 >,
    < -Q!1/6,   -Q!97/128 >,
    < -Q!3/17,  -Q!5969/8192 >,
    < -Q!3/14,  -Q!2231/3698 >,
    < -Q!5/23,  -Q!15937/26912 >,
    < -Q!3/13,  -Q!1487/2738 >,
    < -Q!1/4,   -Q!23/49 >,
    < -Q!2/7,   -Q!601/1849 >,
    < -Q!7/22,  -Q!4327/22898 >,
    < -Q!2/5,    Q!79/529 >,
    < -Q!4/9,    Q!1879/5929 >,
    < -Q!9/20,   Q!12281/36481 >,
    < -Q!13/25,  Q!56761/101761 >,
    < -Q!11/21,  Q!14513/25538 >,
    < -Q!11/20,  Q!28321/44521 >,
    < -Q!13/23,  Q!27103/40328 >,
    < -Q!13/22,  Q!26111/35912 >,
    < -Q!14/23,  Q!273103/358801 >,
    < -Q!9/14,   Q!5503/6728 >,
    < -Q!11/17,  Q!12191/14792 >,
    < -Q!18/25,  Q!616079/677329 >,
    < -Q!18/23,  Q!555719/582169 >,
    < -Q!19/24,  Q!170041/177241 >,
    < -Q!4/5,    Q!1319/1369 >,
    < -Q!21/25,  Q!236081/241081 >,
    < -Q!7/8,    Q!2777/2809 >,
    < -Q!9/10,   Q!3673/3698 >,
    < -Q!19/21,  Q!72521/72962 >,
    < -Q!11/12,  Q!16057/16129 >,
    < -Q!14/15,  Q!165199/165649 >
];


//------------------------------------------------------------
// The online calculator may require verification in batches.
// Change First and Last as needed; together the batches should
// cover all indices 1,...,#data.
//------------------------------------------------------------

First := 1;
Last  := 5;

for j in [First..Last] do

    item := data[j];

    t := item[1];
    rho := item[2];


    print "==================================================";
    print "t =", t;
    print "rho =", rho;

    //--------------------------------------------------------
    // Verify the apex parametrization.
    //--------------------------------------------------------

    s := 2*(t+1)/(2*t^2+t+1);

    assert rho eq 1-s^2/2;

    uA := t*s^2;

    //--------------------------------------------------------
    // Curve C_rho.
    //--------------------------------------------------------

    Qrho :=
        x^2 - 2*(1-rho)*x + rho^2 - 1;

    Srho :=
        x^4
        + 2*(1-rho)*x^3
        + (1-rho)*(3-5*rho)*x^2
        - 4*(1-rho)^2*(1+rho)*x
        - 4*(1-rho)^3*(1+rho);

    f := Qrho*Srho;

    assert Discriminant(f) ne 0;
    assert Evaluate(f,uA) eq 0;
    assert #Roots(f) eq 4;

    //--------------------------------------------------------
    // Integral model, obtained by scaling y.
    //--------------------------------------------------------

    d := Denominator(rho);
    c := d^3;

    fint := c^2*f;

    fint := Qx![
        Integers()!Coefficient(fint,i)
        : i in [0..Degree(fint)]
    ];

    C := HyperellipticCurve(fint);
    J := Jacobian(C);

    //--------------------------------------------------------
    // Rank = 1.
    //--------------------------------------------------------

    rb := RankBound(J);
    assert rb eq 1;

    uc := rho - 1;
    yc := 2*(rho-1)^2*(2*rho-1);
    Yc := c*yc;

    assert Evaluate(fint,uc) eq Yc^2;

    Pc := C![uc,Yc,1];
    Pinf := C![1,c,0];

    // This is minus the coincidence class; the sign is irrelevant.
    D := J![Pinf,Pc];

    h := CanonicalHeight(D);
    assert h gt 0;

    //--------------------------------------------------------
    // Chabauty.
    //--------------------------------------------------------

    allpts := Chabauty(D : ptC := Pc);

    print "rational points =", allpts;
    assert #allpts eq 10;

    //--------------------------------------------------------
    // Structure of C_rho(Q).
    //--------------------------------------------------------

    num_inf :=
        #[P : P in allpts | P[3] eq 0];

    num_deg :=
        #[P : P in allpts |
            P[3] ne 0 and P[1] eq 0];

    num_coinc :=
        #[P : P in allpts |
            P[3] ne 0 and P[1]/P[3] eq rho-1];

    weier :=
        [P : P in allpts |
            P[3] ne 0 and P[2] eq 0];

    assert num_inf eq 2;
    assert num_deg eq 2;
    assert num_coinc eq 2;
    assert #weier eq 4;

    //--------------------------------------------------------
    // Exactly one Weierstrass point is genuine.
    //--------------------------------------------------------

    genuine_count := 0;

    for P in weier do

        u := P[1]/P[3];

        if IsGenuineWeierstrass(rho,u) then
            genuine_count +:= 1;
        end if;

    end for;

    assert genuine_count eq 1;
    assert IsGenuineWeierstrass(rho,uA);

    print "RankBound =", rb;
    print "canonical height =", h;
    print "#C_rho(Q) =", #allpts;
    print "genuine Weierstrass points =", genuine_count;

end for;

print "==================================================";
\end{verbatim}

\subsection{Higher-rank apex specializations}
\label{app:apex-higher-ranks}

The following code verifies the three exact-rank computations in
Proposition~\ref{prop:apex-higher-ranks}. In each case,
\texttt{RankBound} gives the upper bound, while
\texttt{MordellWeilGroupGenus2} finds a finite-index subgroup having
the same torsion-free rank. Since the latter computation can be
time-consuming, the three specializations may be verified separately
on the online \textsc{Magma} calculator.

\begin{verbatim}
////////////////////////////////////////////////////////////////////////
// Higher-rank specializations in the apex family
////////////////////////////////////////////////////////////////////////

Q := Rationals();
Qx<x> := PolynomialRing(Q);

data := [
    < -Q!1/3, -Q!1/8,       2 >,
    < -Q!1/5, -Q!79/121,    3 >,
    < -Q!4/7,  Q!1927/2809, 4 >
];

//------------------------------------------------------------
// The online calculator may require these computations
// to be run separately.
//
// Use First = Last = 1, 2, or 3 to verify one case at a time.
// To run all three at once, use First = 1 and Last = 3.
//------------------------------------------------------------

First := 1;
Last  := 1;

for j in [First..Last] do

    item := data[j];

    t := item[1];
    rho := item[2];
    expected_rank := item[3];
    print "============================================================";
    print "t =", t;
    print "rho =", rho;

    //------------------------------------------------------------
    // Verify the apex parametrization.
    //------------------------------------------------------------

    s := 2*(t+1)/(2*t^2+t+1);

    assert rho eq 1-s^2/2;

    //------------------------------------------------------------
    // Curve C_rho.
    //------------------------------------------------------------

    Qrho :=
        x^2
        - 2*(1-rho)*x
        + rho^2 - 1;

    Srho :=
        x^4
        + 2*(1-rho)*x^3
        + (1-rho)*(3-5*rho)*x^2
        - 4*(1-rho)^2*(1+rho)*x
        - 4*(1-rho)^3*(1+rho);

    f := Qrho*Srho;

    assert Discriminant(f) ne 0;

    //------------------------------------------------------------
    // Integral model.
    //------------------------------------------------------------

    d := Denominator(rho);
    c := d^3;

    fint := c^2*f;

    fint := Qx![
        Integers()!Coefficient(fint,i)
        : i in [0..Degree(fint)]
    ];

    C := HyperellipticCurve(fint);
    J := Jacobian(C);

    //------------------------------------------------------------
    // Upper bound from 2-descent.
    //------------------------------------------------------------

    rb := RankBound(J);

    assert rb eq expected_rank;

    //------------------------------------------------------------
    // Matching lower bound from a Mordell--Weil subgroup.
    //------------------------------------------------------------

    SetVerbose("MordellWeilGroup",1);

    G, mp, finite_index, proved, upper :=
        MordellWeilGroupGenus2(
            J :
            RankOnly := true,
            Rankbound := rb
        );

    assert finite_index;
    assert upper eq expected_rank;
    assert TorsionFreeRank(G) eq expected_rank;

    print "Mordell-Weil subgroup =", G;
    print "torsion-free rank =", TorsionFreeRank(G);
    print "finite_index =", finite_index;
    print "proved =", proved;
    print "upper =", upper;

end for;

print "============================================================";
\end{verbatim}
\subsection{Rank-two scalene specializations}
\label{app:rank-two-scalene}

The following \textsc{Magma} code verifies the computations used in
Proposition~\ref{prop:scalene-rank-two}.

\begin{verbatim}
//////////////////////////////////////////////////////////////////////
// Four scalene specializations of rank 2
//
// For each rho:
//   (1) RankBound gives rank <= 2;
//   (2) the coincidence class and the class coming from the
//       genuine scalene point have positive height determinant;
//   (3) hence the two classes are independent and rank = 2.
//////////////////////////////////////////////////////////////////////

Q := Rationals();
Qx<x> := PolynomialRing(Q);

// <rho, u-coordinate of the genuine scalene point>
data := [
    < -Q!3/5,   -Q!2/15 >,
    < -Q!17/32, -Q!35/48 >,
    < Q!2/7,    -Q!9/14 >,
    < Q!4/5,    -Q!17/30 >
];

for item in data do

    rho := item[1];
    us  := item[2];

    print "==================================================";
    print "rho =", rho;
    print "scalene u =", us;

    //------------------------------------------------------------
    // Curve C_rho : y^2 = F_rho(x)
    //------------------------------------------------------------

    Qrho :=
        x^2 - 2*(1-rho)*x + rho^2 - 1;

    Srho :=
        x^4
        + 2*(1-rho)*x^3
        + (1-rho)*(3-5*rho)*x^2
        - 4*(1-rho)^2*(1+rho)*x
        - 4*(1-rho)^3*(1+rho);

    f := Qrho*Srho;

    assert Discriminant(f) ne 0;

    //------------------------------------------------------------
    // Multiply by a rational square to obtain an integral model.
    //------------------------------------------------------------

    d := Denominator(rho);
    c := d^3;

    fint := c^2*f;
    fint := Qx![
        Integers()!Coefficient(fint,i)
        : i in [0..Degree(fint)]
    ];

    C := HyperellipticCurve(fint);
    J := Jacobian(C);

    //------------------------------------------------------------
    // Rank upper bound.
    //------------------------------------------------------------

    rb := RankBound(J);
    assert rb eq 2;

    //------------------------------------------------------------
    // Coincidence point.
    //------------------------------------------------------------

    uc := rho - 1;
    yc := 2*(rho-1)^2*(2*rho-1);
    Yc := c*yc;

    assert Evaluate(fint,uc) eq Yc^2;

    Pc := C![uc,Yc,1];

    //------------------------------------------------------------
    // Genuine scalene point.
    //------------------------------------------------------------

    flag, ys := IsSquare(Evaluate(f,us));
    assert flag;

    Ys := c*ys;
    assert Evaluate(fint,us) eq Ys^2;

    Ps := C![us,Ys,1];

    //------------------------------------------------------------
    // Divisor classes.
    //------------------------------------------------------------

    Pinf := C![1,c,0];

    Dc := J![Pc,Pinf];
    Ds := J![Ps,Pinf];

    //------------------------------------------------------------
    // Neron--Tate height pairing.
    //------------------------------------------------------------


    prec := 100;

    hc := CanonicalHeight(Dc : Precision := prec);
    hs := CanonicalHeight(Ds : Precision := prec);
    hp := HeightPairing(Dc,Ds : Precision := prec);

    detH := hc*hs - hp^2;

    assert detH gt 0;

    print "RankBound =", rb;
    print "height coincidence class =", hc;
    print "height scalene class     =", hs;
    print "height pairing           =", hp;
    print "height determinant       =", detH;
    print "Hence rank J_rho(Q) = 2.";

end for;

print "==================================================";
\end{verbatim}

\subsection{Rank-two specializations in the base family} \label{app:rank-two-base}

The following \textsc{Magma} code verifies the computations stated in
Proposition~\ref{prop:base-rank-two-examples}.

\begin{verbatim}
Q := Rationals();
Qx<x> := PolynomialRing(Q);

data := [
    < Q!1/3, Q!3/4,  64 >,
    < Q!2,   Q!2/11, 1331 >
];


for item in data do

    m := item[1];
    rho := item[2];
    c := item[3];

    assert rho eq (6 - 2*m)/(m^2 + 7);

    Qrho :=
        x^2 - 2*(1-rho)*x + rho^2 - 1;

    Srho :=
        x^4
        + 2*(1-rho)*x^3
        + (1-rho)*(3-5*rho)*x^2
        - 4*(1-rho)^2*(1+rho)*x
        - 4*(1-rho)^3*(1+rho);

    f := Qrho*Srho;

    fint := c^2*f;
    fint := Qx![
        Integers()!Coefficient(fint,i)
        : i in [0..Degree(fint)]
    ];

    C := HyperellipticCurve(fint);
    J := Jacobian(C);

    rb := RankBound(J);
    assert rb eq 2;

    uc := rho - 1;
    yc := 2*(rho-1)^2*(2*rho-1);

    uB := (m+1)*(m-3)/(m^2+7);
    yB := 2*(m-1)*(m+1)^4*(m+5)/(m^2+7)^3;

    Pc := C![uc,c*yc,1];
    PB := C![uB,c*yB,1];
    Pinf := C![1,c,0];

    Dc := J![Pc,Pinf];
    DB := J![PB,Pinf];

    prec := 100;

    hc := CanonicalHeight(Dc : Precision := prec);
    hB := CanonicalHeight(DB : Precision := prec);
    hp := HeightPairing(Dc,DB : Precision := prec);

    detH := hc*hB - hp^2;

    assert detH gt 0;

    print "rho =", rho;
    print "RankBound =", rb;
    print "height determinant =", detH;
    print "Hence rank J_rho(Q) = 2.";

end for;
\end{verbatim}

\subsection{The elliptic four-pair family}
\label{app:elliptic-four-pair}

The following \textsc{Magma} code verifies the square identities used
in Theorem~\ref{thm:four-pair-elliptic}.

\begin{verbatim}
////////////////////////////////////////////////////////////////////////
// Elliptic four-pair family
////////////////////////////////////////////////////////////////////////

Q := Rationals();
F<la> := FunctionField(Q);
R<T> := PolynomialRing(F);

fH := 9 - 21*la^2 - 18*la^3 - 2*la^4;

// Function field of H : V^2 = fH.
K<V> := ext<F | T^2 - fH>;

l := K!la;

X :=
    4*l*(l+1)*(l+3)
    / ((3-l)*(l^2+3));

function Delta(z)
    H := z*(1-z^2)/X;
    return 1 - (2+X^2)*H + H^2;
end function;

z1 := 4*l/(l^2+3);

z2 :=
    (1-l)*(l+3)/(l^2+3);

z3 :=
    (l^2+3+2*V)
    / (3*(3-l)*(l+1));

z4 :=
    (l^2+3-2*V)
    / (3*(3-l)*(l+1));

////////////////////////////////////////////////////////////////////////
// Square roots for z1 and z2
////////////////////////////////////////////////////////////////////////

d12 :=
    l*(l+3)*(l^2+6*l-3)
    / (l^2+3)^2;

assert Delta(z1) eq d12^2;
assert Delta(z2) eq d12^2;

////////////////////////////////////////////////////////////////////////
// Square roots for z3 and z4
////////////////////////////////////////////////////////////////////////

A :=
    5*l^6
    + 36*l^5
    + 117*l^4
    + 108*l^3
    + 27*l^2
    + 27;

B :=
    47*l^6
    + 306*l^5
    + 765*l^4
    + 972*l^3
    + 729*l^2
    + 162*l
    + 27;

D :=
    27*l*(l-3)^2*(l+1)^4*(l+3);

d3 :=
    ((7*l^2+18*l+3)*A - B*V)/D;

d4 :=
    ((7*l^2+18*l+3)*A + B*V)/D;

assert Delta(z3) eq d3^2;
assert Delta(z4) eq d4^2;

print "All four square identities verified.";
\end{verbatim}

\subsection{Verification of the higher-multiplicity examples}
\label{app:high-multiplicity}

The following computation verifies all the entries of
Tables~\ref{tab:high-multiplicity-disjoint} and
\ref{tab:high-multiplicity-shared} using exact rational arithmetic.
For each listed value of $\rho$ and each listed parameter $u$, we solve
the quadratic equation
\[
2(1-\rho)^2(\rho-q)(1+q)
+
u(1-\rho^2-u^2)(1-q)=0
\]
for $q$. The two rational roots give congruent $\theta$-triangles, as
proved in Proposition~\ref{prop:fixed-isosceles}. The computation
checks the triangle inequalities, the distinguished angle, equality of
perimeter and area, non-congruence of the two triangles, and finally
the number of distinct geometric pairs.

\begin{verbatim}
////////////////////////////////////////////////////////////////////////
// Exact verification of the high-multiplicity tables
////////////////////////////////////////////////////////////////////////

Q := Rationals();
Qq<qq> := PolynomialRing(Q);


////////////////////////////////////////////////////////////////////////
// Basic routines
////////////////////////////////////////////////////////////////////////

function CanTriangle(T)

    S := T;
    Sort(~S);
    return S;

end function;


function ValidTriangle(T)

    S := CanTriangle(T);

    return
        S[1] gt 0 and
        S[2] gt 0 and
        S[3] gt 0 and
        S[1] + S[2] gt S[3];

end function;


////////////////////////////////////////////////////////////////////////
// Given rho and u, reconstruct the geometric genuine pair.
//
// The quadratic in q has two rational roots. They give the same
// theta-triangle up to permutation of the two sides adjacent to
// theta. We check this explicitly.
////////////////////////////////////////////////////////////////////////

function PairFromRhoU(rho,u)

    if not (-1 lt rho and rho lt 1) then
        return false, [], [];
    end if;

    if not (u lt 0 and u^2 lt 1-rho^2) then
        return false, [], [];
    end if;

    Iso := [
        2*(1-rho^2-u^2),
        u^2-rho^2+1,
        u^2-rho^2+1
    ];

    if not ValidTriangle(Iso) then
        return false, [], [];
    end if;

    IsoCan := CanTriangle(Iso);

    eqn :=
        2*(1-rho)^2*(rho-qq)*(1+qq)
        + u*(1-rho^2-u^2)*(1-qq);

    roots := Roots(eqn);

    // For all entries in the tables there are two rational q-values.
    assert #roots eq 2;

    ThetaKeys := [];

    for rt in roots do

        q := rt[1];

        if -1 lt q and q lt 1 and q lt rho then

            k := 2*(1-rho)/(1-q);

            Theta := [
                2*k*(rho-q),
                k*(1-q^2),
                k*(q^2-2*rho*q+1)
            ];

            assert ValidTriangle(Theta);

            // Check that the distinguished angle has cosine rho.
            assert
                Theta[3]^2 eq
                Theta[1]^2 + Theta[2]^2
                - 2*rho*Theta[1]*Theta[2];

            // Equal perimeter.
            assert &+Theta eq &+Iso;

            // Equal area after cancelling sqrt(1-rho^2).
            assert
                Theta[1]*Theta[2]/2
                eq
                -2*u*(1-rho^2-u^2);

            ThetaCan := CanTriangle(Theta);

            // Genuine: the two triangles are not congruent.
            assert ThetaCan ne IsoCan;

            if not (ThetaCan in ThetaKeys) then
                Append(~ThetaKeys,ThetaCan);
            end if;

        end if;

    end for;

    // The two q-values represent the same geometric theta-triangle.
    assert #ThetaKeys eq 1;

    PairKey := ThetaKeys[1] cat IsoCan;

    return true, PairKey, ThetaKeys[1];

end function;


////////////////////////////////////////////////////////////////////////
// Data
//
// Each entry is
//
// < rho, list of u-values, expected number of pairs,
//        expected number of distinct theta-triangles >.
//
// The first five entries correspond to pairwise non-congruent
// theta-triangles. The last four contain one theta-triangle shared
// by two distinct isosceles partners.
//
// Within each category, the >= 6 example comes first; the >= 5
// examples are then ordered by increasing rho.
////////////////////////////////////////////////////////////////////////

data := [* *];


////////////////////////////////////////////////////////////////////////
// Disjoint theta-triangles
////////////////////////////////////////////////////////////////////////

Append(~data,
    <
        Q!1952/3977,
        [
            -Q!6525/7954,
            -Q!8415/15908,
            -Q!4095/7954,
            -Q!1980/3977,
            -Q!1800/3977,
            -Q!1395/3977
        ],
        6, 6
    >
);

Append(~data,
    <
        Q!613/1495,
        [
            -Q!42/65,
            -Q!714/1495,
            -Q!672/1495,
            -Q!651/1495,
            -Q!357/1495
        ],
        5, 5
    >
);

Append(~data,
    <
        Q!817/1969,
        [
            -Q!952/1969,
            -Q!912/1969,
            -Q!624/1969,
            -Q!2544/9845,
            -Q!9552/13783
        ],
        5, 5
    >
);

Append(~data,
    <
        Q!7671/16121,
        [
            -Q!13780/16121,
            -Q!8580/16121,
            -Q!8320/16121,
            -Q!39650/177331,
            -Q!7956/16121
        ],
        5, 5
    >
);

Append(~data,
    <
        Q!302/527,
        [
            -Q!315/527,
            -Q!120/527,
            -Q!525/1054,
            -Q!435/1054,
            -Q!829/1581
        ],
        5, 5
    >
);


////////////////////////////////////////////////////////////////////////
// One shared theta-triangle
////////////////////////////////////////////////////////////////////////

Append(~data,
    <
        Q!2619/5069,
        [
            -Q!3360/5069,
            -Q!1540/5069,
            -Q!420/5069,
            -Q!5075/10138,
            -Q!980/15207,
            -Q!92225/167277
        ],
        6, 5
    >
);

Append(~data,
    <
        -Q!327/1241,
        [
            -Q!1008/1241,
            -Q!56/73,
            -Q!392/1241,
            -Q!1064/3723,
            -Q!168/1241
        ],
        5, 4
    >
);

Append(~data,
    <
        Q!2341/4141,
        [
            -Q!60/101,
            -Q!1440/4141,
            -Q!2100/4141,
            -Q!900/4141,
            -Q!11112/28987
        ],
        5, 4
    >
);

Append(~data,
    <
        Q!171/221,
        [
            -Q!100/221,
            -Q!60/221,
            -Q!105/221,
            -Q!665/2431,
            -Q!588/2873
        ],
        5, 4
    >
);


////////////////////////////////////////////////////////////////////////
// Verify every entry
////////////////////////////////////////////////////////////////////////

for item in data do

    rho           := item[1];
    us            := item[2];
    expectedPairs := item[3];
    expectedTheta := item[4];

    PairKeys  := [];
    ThetaKeys := [];

    for u in us do

        flag, PairKey, ThetaKey := PairFromRhoU(rho,u);

        assert flag;

        // Each listed u gives a new genuine geometric pair.
        assert not (PairKey in PairKeys);
        Append(~PairKeys,PairKey);

        if not (ThetaKey in ThetaKeys) then
            Append(~ThetaKeys,ThetaKey);
        end if;

    end for;

    assert #PairKeys eq expectedPairs;
    assert #ThetaKeys eq expectedTheta;

    print "============================================================";
    print "rho =", rho;
    print "number of distinct genuine pairs =", #PairKeys;
    print "number of distinct theta-triangles =", #ThetaKeys;

    if expectedPairs eq expectedTheta then
        print "geometry: disjoint";
    else
        assert expectedPairs eq expectedTheta + 1;
        print "geometry: one shared theta-triangle";
    end if;

end for;

print "============================================================";
print "All entries of the high-multiplicity tables verified.";
\end{verbatim}

\bibliographystyle{alpha}
\bibliography{Bibliography}

\end{document}